\numberwithin{equation}{section}
\newcommand{\R}{\mathbb{R}}
\newcommand{\C}{\mathbb{C}}
\newcommand{\Z}{\mathbb{Z}}
\renewcommand{\le}{\leqslant}
\renewcommand{\ge}{\geqslant}
\renewcommand{\leq}{\leqslant}
\renewcommand{\geq}{\geqslant}
\newcommand{\be}{\begin{equation}}
\newcommand{\en}{\end{equation}}
\newcommand{\ee}{\end{equation}}
\newcommand{\bt}{\begin{theorem}}
\newcommand{\et}{\end{theorem}}
\newcommand{\bp}{\begin{proof}}
\newcommand{\ep}{\end{proof}}
\newcommand{\bc}{\begin{cor}}
\newcommand{\ec}{\end{cor}}
\newcommand{\bl}{\begin{lemma}}
\newcommand{\el}{\end{lemma}}
\newcommand{\bprop}{\begin{prop}}
\newcommand{\eprop}{\end{prop}}
\newcommand{\N}{\mathbb{N}}
\newtheorem{innercustomthm}{Theorem}
\newtheorem{theorem}{Theorem}[section]
\newtheorem{remark}{Remark}
\newtheorem{lemma}[theorem]{Lemma}
\newtheorem{proposition}[theorem]{Proposition}
\numberwithin{theorem}{section} \numberwithin{definition}{section}
\newcommand{\RNum}[1]{\uppercase\expandafter{\romannumeral #1\relax}}
\def\R{\mathbb{R}}
\def\N{\mathbb{N}}
\def\Z{\mathbb{Z}}
\def\C{\mathbb{C}}
\newcommand{\Rn}{\mathbb{R}^n}
\newcommand{\D}{\mathcal{D}}
\newcommand{\vertiii}[1]{{\left\vert\kern-0.25ex\left\vert\kern-0.25ex\left\vert #1 
		\right\vert\kern-0.25ex\right\vert\kern-0.25ex\right\vert}}
\theoremstyle{definition}
\author[A. Mu\~noz]{Alexander Mu\~noz}
\address{IMECC-UNICAMP, Rua S\'ergio Buarque de Holanda, 651, 13083-859, Cam\-pi\-nas-SP, Bra\-zil}
\email{alexd@ime.unicamp.br}
\author[A. Pastor]{Ademir Pastor}
\address{IMECC-UNICAMP, Rua S\'ergio Buarque de Holanda, 651, 13083-859, Cam\-pi\-nas-SP, Bra\-zil}
\email{apastor@ime.unicamp.br}
\thanks{}
\subjclass{35B44, 35Q53}
\date{}
\title[Persistence properties and applications to dispersive blow up]{Local well-posedness in weighted Sobolev spaces for nonlinear dispersive equations with applications to dispersive blow up}
\keywords{Weighted Sobolev spaces, Dispersive blow up, local well-posedness}
\begin{document}

\begin{abstract} 
In the first part of this work we study the local well-posedness of dispersive equations in the weighted spaces $H^s(\R)\cap L^2(|x|^{2b}dx)$. We then apply our results for several dispersive models such as the Hirota-Satsuma system, the OST equation, the Kawahara equation and a fifth-order model. Using these local results, the second part of this work is devoted to obtain results related to dispersive blow up of the Kawahara equation and Hirota-Satsuma system. 
\end{abstract}

\maketitle

\section{Introduction}\label{intro}

A fundamental aspect in theory of dispersive equations is the study of well-posedness. Following Kato \cite{KATO}, we say that the initial value problem (IVP) 
\begin{equation}
	\label{wpsintro}	
	\begin{cases}
		\partial_t u (x,t)= f(u),\quad x\in \R^n,\ t\in \R \\
	u(x,0)=u_0(x) 
	\end{cases}
\end{equation}
is \textit{locally well-posed} in the Banach space $Y$ if the next two conditions are satisfied:
\begin{enumerate}
	\item For each initial data $u_0\in Y$ there exist $T>0$ with a unique solution $u$ in the space $C([0,T];Y)$. 
	\item The data-solution map $u_0\mapsto u$ is continuous from $Y$ to $C([0,T];Y)$.
\end{enumerate}
In case $T$ can be selected arbitrarily large, we say the IVP is \textit{globally well-posed} in $Y$. 
It is worth to emphasize that condition (1) above is actually requiring two properties: the existence of a unique solution and its persistence in the functional space $Y$ along time. The persistence property is one of the main concerns of this work. 

The IVP associated to several dispersive equations has been considered extensively in the literature. Classical methods as the contraction principle have been employed to obtain local  well-posedness in functional spaces measuring regularity of the solutions (see for instance \cite{Caz}, \cite{KATO}, \cite{KPV} and the references therein). In \cite{KATO}, when studying the so-called Korteweg-de Vries (KdV) equation, Kato also considered spaces that, in addition to smoothness, also measure the decay of the solutions. Among the possibilities, persistence in the spaces $Z_{s,b}:=H^s(\R^n)\cap L^2(|x|^{2b}dx)$ plays an important role. The relation between decay and regularity displayed by the Fourier transform suggest the study of the persistence in such spaces. Several classical results support the existence a natural bound between the two spaces involved in the definition of $Z_{s,b}$. 

 In the past years, new techniques based on the Besov or Bourgain spaces, have been used to address the IVP associated to many dispersive equations in low regularity spaces; unfortunately, the relation between decay and regularity under these new technologies is not well understood yet by the authors. 

Earlier works dealing with persistence in the spaces $Z_{s,b}$ are based on formulas that interchange weights with the group associated to the linear part of the underlying equation. In \cite{HTN1}, \cite{HTN2} and \cite{HTN3}, based on the commutative properties of the operators $\Gamma_j=x_j+2it\partial_j$, the authors used the equality $$x^\alpha e^{it\Delta}u_0=e^{it\Delta}\Gamma^\alpha u_0, \ \ \ \alpha\in\N^n;$$ and some calculus  inequalities for the operators $\Gamma_j$ to show that if $u_0\in Z_{m,k/2}$, with $m,k$ integers, then the IVP associated with the Schr\"odinger equation,
\begin{equation}
\begin{cases}
	i\partial_tu+\Delta u+\mu |u|^{a-1}u=0, \qquad a>1,\\
	u(x,0)=u_0(x),
\end{cases}
\end{equation}
has   a unique solution $$u\in C([0,T];Z_{m,k/2})\cap L^q([0,T]; L^p_k(\R^n)\cap L^p(|x|^{k}dx)), $$ for appropriate $m$ and $k$. Here $(p,q)$ is some admissible  pair. 
This result for indices $m, k$ not necessarily integers was extended in  \cite[Theorem 1]{NAHASPONCE}.

In a similar fashion, the work of Kato \cite{KATO} for the KdV equation in the spaces $Z_{2r,r}$, $r\ge 1$ integer,   was extended  in \cite{NAHAS} to non-integer indices. Despite the lack of a closed expression for the oscillatory integral defining the linear group $\{U(t)\}_{t\in\R}$ of the KdV, in \cite{FLP}, the authors managed to obtain a simplified (and stronger) version of the results in \cite{NAHAS}. The new key ingredient was the formula
\begin{equation}\label{trocaflp}
	|x|^\alpha U(t)u_0(x)=U(t)\left\{(|x|^\alpha u_0)(x)+\left[\Phi_{t,\alpha}(\widehat{u_0}(\xi))\right]^\vee(x)\right\},
\end{equation} 
for $\alpha\in(0,1)$ and where $\Phi_{t,\alpha}$ is a remainder. To prove formula \eqref{trocaflp}, the authors used the following version of Stein's derivative: \begin{equation}\label{steinderiv2}D_\alpha f(x)=\lim\limits_{\epsilon\to0}\frac{1}{c_\alpha}\int_{|y|\ge\epsilon}\frac{f(x+y)-f(x)}{|y|^{n+\alpha}}\ dy.
\end{equation}
The main advantage of this version relies on the fact that for suitable functions $f$, it follows that $\widehat{D_\alpha(f)}=|\xi|^\alpha\widehat{f}$. This allowed the authors to recover the unitary group after a convenient application of a Leibniz-type rule for Stein derivatives. 
The remainder can be estimated in terms of the regularity of $u_0$.

The Benjamin-Ono equation
$$
\partial_t u+\mathcal{H}\partial_x^2u+uu_x=0,
$$
where $\mathcal{H}$ denotes the Hilbert transform has also been studied in the spaces $Z_{s,b}$. For integers $s$ and $b$, persistence in these spaces was first studied in \cite{Iorio}. For non-integer indices the persistence properties were established in \cite{FonPonce}. For the study of the IVP associated with other dispersive equations we refer the reader to \cite{BJMZK}, \cite{BJM0}, \cite{CP}, \cite{CP1}, \cite{FLP1}, \cite{pachon}, \cite{J}, \cite{Riano} and references therein.

\subsection{Main Results} Our first concern is to study decay properties of solutions to linear problems for several dispersive equations. More precisely, we are interested in discussing the problem
\begin{equation} \label{linearivp}
	\begin{cases}
		\partial_tu+Lu=0, \quad x\in\R^n,\;t\in\R,\\
		u(0)=u_0,
	\end{cases}
\end{equation}
where $L$ is a linear operator satisfying $\widehat{Lf}(\xi)=i\phi(\xi)\widehat{f}(\xi)$ for some continuous real-valued function $\phi$. Via Fourier transform, the solution of \eqref{linearivp} is given by
$$
U(t)u_0(x)=u(x,t)=(e^{-it\phi(\xi)}\widehat{u}_0)^\vee(x),
$$
where $\{U(t)\}_{t\in\R}$ is the associated linear group.

We shall assume that the phase $\phi:\R^n\to\R$ satisfies the following conditions:

\begin{equation}\tag{A}
	\begin{split}
		&\text{ There exists a continuous function } g:\R^n\to\R \text{ such that } g>0, \text{ except maybe at }0,\\&\text{ and for all }
		x,y\in \Rn\text{ with }|x-y|\le |x|\text{ we have } |\phi(x)-\phi(y)|\le g(x) |x-y|. \label{AA}
	\end{split}
\end{equation}

\begin{equation}\tag{B}
	\begin{split}
		&\text{There exists } C>0 \text{ such that for all }  x,y\in\Rn \text{ satisfying } |x-y|\ge|x| \text{ we have}\\& |\phi(x)-\phi(y)|\le C |x-y|^a, \text{for some }a\ge 1.  \label{BB}
	\end{split}
\end{equation}

Note that by taking $x=0$ in condition \eqref{BB} we deduce that $|\phi(y)|\leq C(1+|y|^a)$ for any $y\in\R^n$. In particular, from Stone's theorem one can see that $L$ generates a unitary group in $H^s(\R^n)$, for any $s\in\R$.

\vskip.2cm

\noindent {\bf Examples.} Some examples of phase functions satisfying \eqref{AA} and \eqref{BB} are given below.
Assume $k\in\Z^+$ and $i\in\{1,\dots,n\}$.
\begin{enumerate}
	\item Let $\phi_1:\R\to\R$ be given by $\phi_1(x)=x^k$. In this case we may take  $g(x):=C_k|x|^{k-1}$ and $a=k$. In the particular case $k=3$ we see that $\phi_1(x)=x^3$ is the phase function associated with the linear KdV equation.
	\item Let $\phi_2:\R^n\to\R$ be given by $\phi_2(x)=|x|^k$. Here we may take again $g(x)=C_k|x|^{k-1}$ and $a=k$. Note that for $k=2$ we obtain $\phi_2(x)=|x|^2$ which is the phase function associated with the linear Schr\"odinger equation.
	\item Denote by $\widehat{x_i}:=(x_1,\dots,x_{i-1},x_{i+1},\dots,x_n).$ The functions $\phi_3^i:\R^n\to\R$ defined by $\phi^i_3(x):=x_i|\widehat{x_i}|^2$, also satisfy \eqref{AA} and \eqref{BB}. In this case $g(x)=C_k|x|^2$ and $a=3$.
	\item Define $\phi_4^i:\R^n\to\R$ as $\phi_4^i(x)=x_i^k$. Then $\phi_4^i$ also satisfies \eqref{AA} and \eqref{BB} with $g(x)=C_k|x|^{k-1}$ and $a=k$. Alternatively, we may also take $g(x)=C_k|x_i|^{k-1}$ (see \cite{BJMZK}). By taking $\phi(x)=\phi_4^1(x)+\phi_3^1(x)=x_1^3 + x_1|\widehat{x}_1|^2$, we see that $\phi$ is the phase function associated with the $n$-dimensional Zakharov-Kuznetsov equation,
	$$
	\partial_t u+\partial_x\Delta u+u\partial_xu=0.
	$$
	\item More generally, for $\beta\in \N^n$, by taking $\phi_5:\R^n\to\R$ as $\phi_5(x)=x^\beta$ we obtain that it satisfies \eqref{AA} and \eqref{BB} with $g(x)=C_\beta|x|^{|\beta|-1}$ and $a=|\beta|$.
\end{enumerate}

Our main theorem concerning the persitence property of the solutions of \eqref{linearivp} reads as follows.

\begin{theorem}\label{linearteo}
	Let $p\in \Z^+$ and assume that $\phi_1,\dots, \phi_p$ satisfy the condition \eqref{AA} and \eqref{BB} with $g_i(x)\le C_i (1+|x|^{k_i})$, for some $k_i\in\Z^+$ and $C_i>0$, $i=1\ldots,p$. Set $$\Phi(\xi):=\displaystyle\sum_{i=1}^{p}\phi_i(\xi)$$ and $K:=\max \{k_i, i=1,\ldots,p\}$. Let $L$ be the linear operator defined by $Lf=\left(i\Phi(\xi)\widehat{f} \right)^\vee$ and assume $0<s<K$.
	If $u\in C([-T,T],H^s(\R^n))$ is the solution of the IVP
	\begin{equation}\label{lpvi}
		\begin{cases}
			\partial_t u +Lu=0, \quad x\in\R^n, \ t\in \R,  \\
			u(0)=u_0\in Z_{s,b}:=H^s(\R^n)\cap L^2(|x|^{2b}dx),
		\end{cases}
	\end{equation}
	with  $0<b\le s/K$, then $u$ satisfy the inequality
	\begin{equation}\label{trocas}
		\||x|^b u(t) \|_{L^2}\le C\left\{(1+|t|)\|u_0\|_{s,2} + \||x|^bu_0\|_{L^2}  \right\}
	\end{equation}
	where $\|\cdot\|_{s,2}$ denotes the norm in $H^s(\R^n)$ and $C$ depends on $K$, $p$, $s$ and $n$.
\end{theorem}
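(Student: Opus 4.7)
The plan is to work on the Fourier side. Since $\widehat{u(t)}(\xi)=e^{-it\Phi(\xi)}\widehat{u_0}(\xi)$ and the Fourier characterization $\widehat{D_bf}=|\xi|^b\widehat{f}$ of Stein's derivative recalled in \eqref{steinderiv2} combines with Plancherel to give $\||x|^bh\|_{L^2}=\|D_b\widehat{h}\|_{L^2}$ for any $h$, the inequality \eqref{trocas} is equivalent to bounding $\|D_b(e^{-it\Phi}\widehat{u_0})\|_{L^2}$ by $\||x|^bu_0\|_{L^2}+C(1+|t|)\|u_0\|_{s,2}$. I start from the exact two-term identity
\[
D_b(fg)(\xi)=f(\xi)\,D_bg(\xi)+c_b\!\int\!\frac{(f(\xi+y)-f(\xi))\,g(\xi+y)}{|y|^{n+b}}\,dy,
\]
applied to $f=e^{-it\Phi}$ and $g=\widehat{u_0}$. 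Because $|f|=1$, the first summand has $L^2$-norm equal to $\|D_b\widehat{u_0}\|_{L^2}=\||x|^bu_0\|_{L^2}$, which is precisely the first term on the right of \eqref{trocas}. It thus remains to bound the second summand $A(\xi)$ in $L^2$ by $C(1+|t|)\|u_0\|_{s,2}$.

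I split $A=A_1+A_2$ according to whether $|y|\leq|\xi|$ or $|y|\geq|\xi|$. For $A_1$, summing (A) over the components of $\Phi=\sum_{i=1}^p\phi_i$ gives $|e^{-it\Phi(\xi+y)}-e^{-it\Phi(\xi)}|\leq\min(2,|t|G(\xi)|y|)$ with $G:=\sum g_i\leq C(1+|\xi|^K)$. The elementary interpolation $\min(2,x)\leq 2^{1-\theta}x^\theta$, valid for $\theta\in[0,1]$, then yields the bound $C(|t|G(\xi))^\theta|y|^\theta$. Choosing $\theta>b$ makes the effective kernel $|y|^{\theta-b-n}$ integrable near the origin. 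A preliminary split at $|y|=1$ separates a harmless far piece $|y|\geq 1$ which, after using the trivial bound $|f(\xi+y)-f(\xi)|\leq 2$, is controlled by $C\|u_0\|_{L^2}$ through Young's convolution inequality. On the near piece $|y|\leq 1$, Minkowski's integral inequality in $y$ followed by the substitution $\eta=\xi+y$ inside the $L^2_\xi$-norm converts $(1+|\xi|^{K\theta})|\widehat{u_0}(\xi+y)|$ into $(1+|\eta-y|^{K\theta})|\widehat{u_0}(\eta)|\leq C(1+|\eta|^{K\theta})|\widehat{u_0}(\eta)|$, so that the remaining $L^2_\eta$-norm is $\leq C\|u_0\|_{H^{K\theta}}$. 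Taking $\theta$ in the window $(b,s/K]$ — non-empty whenever $b<s/K$ — closes the estimate with $\|A_1\|_{L^2}\leq C(1+|t|)\|u_0\|_{s,2}$.

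The bound for $A_2$ is analogous but simpler. Condition (B) supplies $|e^{-it\Phi(\xi+y)}-e^{-it\Phi(\xi)}|\leq\min(2,C|t||y|^a)$, and since $|y|\geq|\xi|$ means $\xi$ varies in the ball of radius $|y|$, the slice estimate $\|\widehat{u_0}(\cdot+y)\chi_{|\xi|\leq|y|}\|_{L^2(d\xi)}\leq\|u_0\|_{L^2}$ together with Minkowski in $y$ reduces the problem to the scalar integral $\int\min(2,|t||y|^a)\,|y|^{-n-b}\,dy$. Splitting at $|y|=(2/|t|)^{1/a}$ yields a bound $O(|t|^{b/a})\leq O(1+|t|)$, so $\|A_2\|_{L^2}\leq C(1+|t|)\|u_0\|_{L^2}$.

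The main obstacle is the interpolation step in the treatment of $A_1$: the growth $G(\xi)\sim|\xi|^K$ coming from (A) would naively demand $u_0\in H^K$, but the hypothesis only grants $s<K$. The device $\min(2,x)\leq 2^{1-\theta}x^\theta$ trades the forbidden $|\xi|^K$ growth for $|\xi|^{K\theta}$ at the favorable cost of a $|y|^\theta$ factor that relaxes the kernel singularity; the parameter $\theta$ must lie in the window $(b,s/K]$, whose non-emptiness for $b<s/K$ is exactly the hypothesis $b\leq s/K$. The endpoint $b=s/K$ requires slightly more care, for instance via a limiting procedure from interior values or by recasting the argument with the three-term decomposition $D_b(fg)=fD_bg+gD_bf+R$ and exploiting the Sobolev cancellation $\|\widehat{u_0}(\cdot+y)-\widehat{u_0}\|_{L^2}\leq C|y|^b\||x|^bu_0\|_{L^2}$ in the remainder.
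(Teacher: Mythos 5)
Your argument takes a genuinely different route from the paper's (which uses the square-function Stein derivative $\mathcal{D}^b$ of \eqref{steinn}, the pointwise Leibniz inequality \eqref{2.1}, and the pointwise phase estimate $\mathcal{D}^b(e^{it\phi})(x)\le C\{1+(1+|t|)g(x)^b\}$ of Lemma \ref{l1}), and in the open range $0<b<s/K$ your estimates for $f\,D_bg$, $A_1$ and $A_2$ do close. The genuine gap is the endpoint $b=s/K$. This is not a removable technicality: the paper's proof begins by \emph{reducing} the whole theorem to the case $s=Kb$, and every application (Theorems \ref{persistHSS}, \ref{persistost}, \ref{persistkawa}, \ref{persistkk}) allows equality $b=s/2$ or $b=s/4$. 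Your constant in the $A_1$ estimate contains $\int_{|y|\le1}|y|^{\theta-b-n}\,dy\simeq(\theta-b)^{-1}$, which blows up as $\theta\downarrow b$, so the ``limiting procedure from interior values'' cannot work; and knowing the inequality for all $b<s'/K$ only yields the endpoint at the price of assuming $u_0\in H^{s'}$ for some $s'>s$, which is not available.

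The structural reason you lose the endpoint is the two-term identity itself: it leaves $\widehat{u_0}(\xi+y)$ entangled with the $\xi$-dependent amplitude $\min(2,|t|G(\xi)|y|)$, so after Minkowski you must integrate a $\xi$-uniform kernel in $y$, and the only way to tame $|y|^{-n-b}$ near the origin is to spend a factor $|y|^{\theta}$ with $\theta$ strictly larger than $b$, which costs $|\xi|^{K\theta}$ with $K\theta>s$. The paper's Leibniz inequality \eqref{2.1} instead isolates the phase in the pure quantity $\mathcal{D}^b(e^{-it\Phi})(\xi)$, which is then estimated with the $\xi$-dependent splitting radius $g(\xi)^{-1}$ (and $|\xi|$), yielding exactly $g(\xi)^b\lesssim(1+|\xi|)^{Kb}=(1+|\xi|)^{s}$ with no loss; the $L^2$-in-$y$ structure of $\mathcal{D}^b$ is what allows this pointwise separation. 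Your alternative suggestion (three-term decomposition plus the cancellation $\|\widehat{u_0}(\cdot+y)-\widehat{u_0}\|_{L^2}\le C|y|^b\||x|^bu_0\|_{L^2}$) points in the right direction --- the term $\widehat{u_0}\cdot D_b(e^{-it\Phi})$ can be handled by the same case analysis as in Lemma \ref{l1}, since the relevant $L^1$-in-$y$ integrals converge for $b<1$ --- but the remainder $\int|y|^{-n-b}\bigl(f(\xi+y)-f(\xi)\bigr)\bigl(\widehat{u_0}(\xi+y)-\widehat{u_0}(\xi)\bigr)\,dy$ still couples the unbounded weight $G(\xi)$ with the increment of $\widehat{u_0}$ and does not close by the estimates you have written; this is essentially the content of the remainder $\Phi_{t,\alpha}$ in \eqref{trocaflp} and requires a real argument. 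As written, your proof establishes the theorem only for $b<s/K$.
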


Note that \eqref{trocas} indeed establishes that the solution of \eqref{lpvi} persists in $Z_{s,b}$ for any time interval.
A result similar to Theorem \ref{linearteo} was obtained in \cite[Theorem 1.11]{XN}. The authors considered a  phase function given by $$\Phi(\xi)=\sum_{j=1}^{p}C_j\xi^{\beta_j}, \ \ \xi\in\R^n, \ \ \beta_j\in(\Z^+)^n.$$ 
and established the inequality 
$$\||x|^bu(t)\|_{L^2}\le C\||x|^bu_0\|_{L^2}+A(\|u_0\|_{H^{a(b)}}),$$ 
where $b\ge1$, $A$ is a non negative continuous function and $a(b):=\max\limits_{1,\dots,p}(|\beta_j|-1)b$. Their proof relies on estimates based on the differential equation itself. On the other hand, our approach to prove Theorem \ref{linearteo} follows the ideas in \cite{NAHASPONCE} and relies on estimates based on Stein's derivative $\D^b$ of the phase function (see \eqref{steinn}). In consequence, we are able to include weights with $0<b<1$ and establish the same interpolation inequality with $A(x)=(1+|t|)x$ and $a(b)=Kb$.

We point out that Theorem \ref{linearteo} may be seen as an alternative to \eqref{trocaflp} in the sense that it interchanges  weights with the semigroup  but also accepts several dimensions and a wide variety of phase functions. As we already highlighted, phases such as the one defining the linear part of the Zakharov-Kusnetzov equation or multivariate polynomials are included. On the other hand, in contrast with \eqref{trocaflp} we loose the punctual identity.  A  disadvantage of \eqref{trocas} compared to \eqref{trocaflp}, is the impossibility of using Strichartz type estimates once Theorem \ref{linearteo} has been applied. This prevents the application of the theory developed here in the context of estimates that do not rely on the $L^2$-based Sobolev spaces. An example of this situation is the nonlinear Schr\"odinger equation in which the inequalities used to prove local well-posedness are based on the spaces $L^p_b(\R^n)$.

The main tool to prove Theorem \ref{linearteo} is the estimate presented in Lemma \ref{l1}, which in turn is based on previous articles that faced persistence properties for particular equations such as \cite{BJM0}, \cite{FLP} and \cite{NAHASPONCE}. Some other works in which related computations have been done are \cite{pachon} and \cite{J}. 
In \cite{NAHASPONCE}, the authors dealt with the Schr\"odinger equation; using the Stein derivative defined as
\begin{equation}\label{steinn}
	\D^b(f)(x):=\displaystyle\left( \int_{\Rn}{\frac{|f(y)-f(x)|^2}{|x-y|^{n+2b}}dy} \right)^{\frac{1}{2}},
\end{equation}
they estimated $\mathcal{D}^b(e^{it|\xi|^2})(x)$ by exploding the radial behavior of the integral $$
\int_{\R^n}{\frac{|e^{i(-2\sqrt{t}x\cdot y + |y|^2)}-1|^2}{|y|^{n+2b}}dy}.
$$ This estimate was later extended in \cite{BJM0} for $\mathcal{D}^b(e^{it\xi^3})(x)$ when dealing with the Ostrovsky equation. We follow these ideas to generalize it for phase function satisfying \eqref{AA} and \eqref{BB}. 
 
 It is worth mentioning that the modulus present in the definition of $\D{^b}$  generates cancellation of oscillations when $f$ is of the form $e^{it\phi}$, preventing estimate \eqref{trocas} to be in terms of the semigroup associated to $\phi$, in contrast with \eqref{trocaflp}. This issue shrinks optimal applications of Theorem \ref{linearteo} for some nonlinear equations, in which the problem can be resolved using regularization via Sobolev embedding but that might imply extra constrains in the regularity index $s$ that may not match the best local well-posedness result available.

As a direct application of Theorem \ref{linearteo} we prove local well-posedness results in weighted spaces for several physical models. The first model we are interested in is the Hirota-Satsuma system 
\begin{equation}\label{system}
	\begin{cases}
		\partial_t u -a(\partial_x^3 u+6u\partial_x u)=2rv\partial_x v, \quad (x,t)\in\R\times\R,\\
		\partial_t v + \partial_x^3 v +3u\partial_x v =0, \\
		u(0,x)=u_0(x), \quad  v(0,x)=v_0(x),
	\end{cases}
\end{equation}
where $u$ and $v$ are real-valued functions of the variables $x,t\in \R$ and $a,r$ are real constants. The system \eqref{system} was derived in \cite{HS} and describes interactions of two long waves with different dispersion relations. Concerning local well-posedness of \eqref{system} in the standard Sobolev spaces, the following result was proved in \cite{HSS}.

\begin{innercustomthm}\textrm{$($\cite[Theorem 2.1]{HSS}$)$}\label{borys}
	Let $a\neq0$ and $s>3/4$. Then for any $u_0,v_0\in H^s(\R)$, there exists $T=T(\|u_0\|_{s,2},\|v_0\|_{s,2})>0$ and a unique solution $(u,v)$ of problem \eqref{system} such that
	\begin{equation}\label{ints}
		\begin{array}{lll}
			u,v\in C([-T,T]; H^s(\R)), & \partial_x u, \partial_x v \in L^4_T L^\infty_x,&
			D^s_x\partial_x u, D^s_x\partial_x v \in L^\infty_xL^2_T, \\ u, v \in L^2_xL^\infty_T, &\partial_x u, \partial_x v \in L^\infty_xL^2_T.
		\end{array}
	\end{equation}
Moreover, for any $T'\in (0,T)$ there exist neighborhoods $V$ of $u_0$ in $H^s(\R)$ and $V'$ of $v_0$ in $H^s(\R)$ such that the map $(\tilde{u_0},\tilde{v_0})\mapsto(\tilde{u},\tilde{v})$ from $V\times V'$ into the class defined by \eqref{ints}, with $T'$ instead of $T$, is Lipschitz. 
\end{innercustomthm}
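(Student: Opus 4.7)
The plan is to implement the standard Kenig–Ponce–Vega contraction argument, adapted to the coupled system. Let $U_a(t)$ and $U(t)$ denote the unitary groups generated by $-a\partial_x^3$ and $-\partial_x^3$ respectively. I would rewrite \eqref{system} via Duhamel as
\begin{equation*}
u(t)=U_a(t)u_0+\int_0^t U_a(t-\tau)\bigl(6a\,uu_x-2r\,vv_x\bigr)\,d\tau,\qquad v(t)=U(t)v_0-\int_0^t U(t-\tau)\bigl(3\,uv_x\bigr)\,d\tau,
\end{equation*}
and set up the fixed point $(u,v)=\Psi(u,v)$ in a ball of the resolution space $X_T^s\times X_T^s$, where
\begin{equation*}
\|w\|_{X_T^s}:=\|w\|_{L^\infty_T H^s_x}+\|\partial_x w\|_{L^4_T L^\infty_x}+\|D^s_x\partial_x w\|_{L^\infty_x L^2_T}+\|w\|_{L^2_x L^\infty_T}+\|\partial_x w\|_{L^\infty_x L^2_T},
\end{equation*}
matching precisely the class \eqref{ints}.

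Next, I would control the linear part by the four Kenig–Ponce–Vega estimates for KdV-type groups: the local Kato smoothing effect $\|\partial_x U(t)u_0\|_{L^\infty_x L^2_T}\lesssim\|u_0\|_{L^2}$ with its fractional companion $\|D^s_x\partial_x U(t)u_0\|_{L^\infty_x L^2_T}\lesssim\|u_0\|_{s,2}$; the maximal function estimate $\|U(t)u_0\|_{L^2_x L^\infty_T}\lesssim(1+T)^{1/2}\|u_0\|_{s,2}$ valid for $s>3/4$; and the Strichartz-type bound $\|\partial_x U(t)u_0\|_{L^4_T L^\infty_x}\lesssim\|u_0\|_{s,2}$ available for $s>3/4$. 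Inhomogeneous analogues follow from Minkowski, a Christ–Kiselev-type argument, and duality, giving $\|\Psi_i(u,v)\|_{X_T^s}\lesssim\|u_0\|_{s,2}+\|v_0\|_{s,2}+\text{(nonlinear)}$.

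The nonlinear estimates are the crux. For the self-interaction term $uu_x$, one derivative is absorbed via $\partial_x u\in L^\infty_x L^2_T$ paired with $u\in L^2_x L^\infty_T$ through Hölder in $x$ and then in $t$; the contribution in $D^s_x$ is handled by the fractional Leibniz rule of Kenig–Ponce–Vega, using $D^s_x\partial_x u\in L^\infty_x L^2_T$ and $u\in L^2_x L^\infty_T$. The genuinely coupled terms $vv_x$ and $uv_x$ obey the same bounds because both groups $U_a$ and $U$ satisfy identical smoothing and Strichartz inequalities; the cross term $uv_x$ is estimated by placing $u\in L^2_x L^\infty_T$ and $v_x\in L^\infty_x L^2_T$, with its $H^s$ companion again routed through the fractional Leibniz rule. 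Smallness for the contraction is obtained through a $T^\delta$ factor extracted by Hölder in time (coming from embedding $L^\infty_T\hookrightarrow L^4_T$ on a compact interval), giving $T=T(\|u_0\|_{s,2},\|v_0\|_{s,2})$ so that $\Psi$ maps a ball of radius $R\sim\|u_0\|_{s,2}+\|v_0\|_{s,2}$ into itself and contracts by $1/2$. Uniqueness follows by running the same nonlinear estimates on differences $(u_1-u_2,v_1-v_2)$, and the Lipschitz continuous dependence on any $T'<T$ is a standard consequence of the contractive estimate together with the a priori $X^s_{T'}$-bound.

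The principal obstacle is hitting the exact threshold $s>3/4$. This index is forced by the sharp Strichartz-type bound $\|\partial_x U(t)u_0\|_{L^4_T L^\infty_x}\lesssim\|u_0\|_{s,2}$, which fails for $s\le 3/4$; going below would require Bourgain's $X^{s,b}$ machinery rather than the $L^p_x L^q_t$ framework above. A secondary technical point is verifying that the fractional Leibniz rule and the chain rule for $D^s_x$ apply cleanly to the product terms when $s>3/4$ is not an integer, but this is by now a routine consequence of Kato–Ponce and Kenig–Ponce–Vega commutator estimates.
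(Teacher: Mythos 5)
Your proposal is correct and follows essentially the same Kenig--Ponce--Vega contraction scheme that the paper recalls in Section \ref{sec3.1} (and that is carried out in the cited reference \cite{HSS}): the resolution space matching the class \eqref{ints}, the smoothing/maximal/Strichartz linear estimates, and the bilinear estimates via the fractional Leibniz rule all coincide with the quoted argument. One minor correction: the threshold $s>3/4$ is forced by the maximal function estimate $\|U(t)u_0\|_{L^2_xL^\infty_T}\lesssim (1+T)^{1/2}\|u_0\|_{s,2}$ rather than by the $L^4_TL^\infty_x$ Strichartz bound (which already holds at $s=3/4$ via $\|\partial_x U(t)u_0\|_{L^4_TL^\infty_x}\lesssim\|D^{3/4}u_0\|_{L^2}$), but this does not affect the validity of the argument.
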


By performing a natural modification of the Banach space used to establish the proof of Theorem \ref{borys} via contraction principle, we employ \eqref{trocas} to prove:

\begin{theorem}\label{persistHSS}
	Assume, in addition to the hypotheses in Theorem \ref{borys}, that $u_0, v_0\in L^2(|x|^{2b}dx)$ with $b\le s/2$ and $s<2$. Then there exists $T=T(\|u_0\|_{Z_{s,r}},\|v_0\|_{Z_{s,r}})>0$ and a unique solution $(u,v)$ of \eqref{system} such that $u,v$ are in the class defined by \eqref{ints} with $Z_{s,b}$ instead of $H^s(\R)$.
	
	Moreover, for any $T'\in (0,T)$ there exist neighborhoods $V$ of $u_0$ in $Z_{s,b}(\R)$ and $V'$ of $v_0$ in $Z_{s,b}$ such that the map $(\widetilde{u}_0,\widetilde{v}_0)\mapsto(\widetilde{u},\widetilde{v})$ from $V\times V'$ into the class defined by \eqref{ints} with $Z_{s,b}$ instead of $H^s(\R)$ and $T'$ instead of $T$, is Lipschitz. 
\end{theorem}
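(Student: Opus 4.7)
The plan is to run a contraction argument exactly as in \cite{HSS}, but in a weighted enlargement of the space used there, using Theorem \ref{linearteo} to absorb the weight through the two linear groups. Write the system \eqref{system} in Duhamel form
\begin{align*}
u(t) &= U_1(t)u_0 + \int_0^t U_1(t-\tau)\bigl[6a\,u\partial_x u + 2r\,v\partial_x v\bigr](\tau)\,d\tau,\\
v(t) &= U_2(t)v_0 - \int_0^t U_2(t-\tau)\bigl[3\,u\partial_x v\bigr](\tau)\,d\tau,
\end{align*}
where $U_1,U_2$ are the unitary groups with phases $a\xi^3$ and $-\xi^3$. Both phases satisfy \eqref{AA}--\eqref{BB} with $g(\xi)\ls 1+|\xi|^{2}$, so $K=2$; since $0<b\le s/2$ and $s<2$, Theorem \ref{linearteo} applies to both $U_1$ and $U_2$.

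\textbf{Functional framework.} Let $X^s_T$ be the Banach space carrying the mixed norms in \eqref{ints} used in \cite{HSS}, and define
\[
Y^s_{T,b}:=X^s_T\cap L^\infty\bigl([-T,T];L^2(|x|^{2b}dx)\bigr),
\qquad \|w\|_{Y^s_{T,b}}:=\|w\|_{X^s_T}+\||x|^b w\|_{L^\infty_T L^2_x},
\]
and let $\Psi=(\Psi_1,\Psi_2)$ denote the right-hand side of the Duhamel system. I would run the contraction on the closed ball $B_R\subset Y^s_{T,b}\times Y^s_{T,b}$ of radius $R\sim\|(u_0,v_0)\|_{Z_{s,b}\times Z_{s,b}}$. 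All $X^s_T$-components of every bound are copied verbatim from the proof of Theorem \ref{borys}; only the weighted norm needs new work.

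\textbf{Weighted linear and nonlinear estimates.} For the linear evolution, \eqref{trocas} gives
\[
\||x|^b U_i(t)w_0\|_{L^2_x}\le C\bigl[(1+T)\|w_0\|_{s,2}+\||x|^b w_0\|_{L^2_x}\bigr].
\]
For the Duhamel contribution, I would push $|x|^b$ inside the time integral by Minkowski and then apply \eqref{trocas} pointwise in $\tau$, obtaining
\[
\Bigl\||x|^b\!\int_0^t U_i(t-\tau)F(\tau)\,d\tau\Bigr\|_{L^2_x}\le C(1+T)\!\int_0^T\!\|F(\tau)\|_{s,2}\,d\tau + C\!\int_0^T\!\||x|^b F(\tau)\|_{L^2_x}\,d\tau.
\]
The first integral is already controlled in the proof of Theorem \ref{borys}. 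The second is handled by placing $|x|^b$ on the factor that does not carry a derivative and using H\"older first in $x$, then in $\tau$; for instance
\[
\int_0^T\!\||x|^b u\partial_x u\|_{L^2_x}\,d\tau\;\le\;\||x|^b u\|_{L^\infty_T L^2_x}\,\|\partial_x u\|_{L^1_T L^\infty_x}\;\le\; T^{3/4}\,\||x|^b u\|_{L^\infty_T L^2_x}\,\|\partial_x u\|_{L^4_T L^\infty_x},
\]
and analogously for $v\partial_x v$ and for $u\partial_x v$ (in the latter, put $|x|^b$ on $u$). Every factor appearing on the right already belongs to $\|\cdot\|_{Y^s_{T,b}}$ via \eqref{ints}.

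\textbf{Contraction and conclusion.} Combining the above with the \cite{HSS}-estimates for $\|\Psi_i(u,v)\|_{X^s_T}$, one gets $\|\Psi(u,v)\|_{Y^s_{T,b}\times Y^s_{T,b}}\le C\|(u_0,v_0)\|_{Z_{s,b}\times Z_{s,b}}+C(1+T)T^{\theta} R^2$ for some $\theta>0$, so for $T$ sufficiently small depending on $R$, $\Psi$ leaves $B_R$ invariant. Repeating the same computations on differences $(u-\tilde u,v-\tilde v)$ yields a contraction and the Lipschitz dependence on data stated in the theorem. \emph{The main delicate point} is not an estimate but bookkeeping: verifying that every mixed norm produced by the nonlinear weighted bounds is already contained in \eqref{ints}, so that no auxiliary norm needs to be added. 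The linear $(1+T)$ factor coming from Theorem \ref{linearteo} is harmless since it is absorbed into the smallness of $T$.
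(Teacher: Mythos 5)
Your proposal is correct and follows essentially the same route as the paper: augment the contraction norm of \cite{HSS} by $\||x|^b w\|_{L^\infty_T L^2_x}$, use Minkowski to bring the weight inside the Duhamel integral, apply \eqref{trocas} pointwise in $\tau$ (with $K=2$ for the cubic phases, whence the constraints $b\le s/2$, $s<2$), and close the weighted nonlinear terms by placing $|x|^b$ on the underived factor and invoking H\"older together with the $L^4_TL^\infty_x$ bound on $\partial_x u,\partial_x v$ from \eqref{ints}. The paper's computation of the key term, $\||x|^b u\partial_x u\|_{L^2_TL^2_x}\le T^{1/4}\max_{[-T,T]}\||x|^bu\|_{L^2_x}\|\partial_xu\|_{L^4_TL^\infty_x}$, yields the same $T^{3/4}$ gain as yours.
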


\begin{remark}
	In case $a=0$, the idea developed below can be carried on with simpler computations and lead to a similar result. See Theorem 2.2 in \cite{HSS}. 
\end{remark}

Next we consider the IVP associated with the so-called Ostrovsky-Stepanyams-Tsimring (OST for short) equation 
\begin{equation}\label{OST}
	\begin{cases}
		\partial_t u +\partial_x^3 u -\eta(\mathcal{H}\partial_x u + \mathcal{H}\partial_x^3u)+u\partial_x u=0, \ \ \ \quad x\in\R, t>0,\\
		u(0,x)=u_0(x),
	\end{cases}
\end{equation}
where $\eta>0$ is a real constant and $\mathcal{H}$ is the Hilbert transform defined via its Fourier transform as
$$
\widehat{\mathcal{H}f}(\xi)=-i\,\mathrm{sgn}(\xi)\widehat{f}(\xi).
$$ The equation in \eqref{OST} was derived by Ostrovsky, Stepanyams and Tsimring \cite{OST} to describe the radiational instability of long waves in a stratified shear flow. The IVP \eqref{OST} in classical Sobolev spaces was considered in \cite{XM}. The authors proved the following:

\begin{innercustomthm}$($\cite[Theorem 1.1]{XM}$)$
	\label{t1.1}
	Let $u_0\in H^s(\R)$ with $s\ge0$. Then there exist $T>0$ and a unique solution of the IVP \eqref{OST} such that
	\begin{equation}\label{classost}
		\begin{array}{ll}
			u\in C([0,T];H^s(\R)), &\|\partial_x u\|_{L^2_TL^{p_1}_x}+\|D^s\partial_x u\|_{L^2_TL^{p_1}_x}<\infty, \\ \|u\|_{L^{2}_T L^{q_1}_{x}}+\|D^su\|_{L^{2}_T L^{q_1}_x}<\infty,
		\end{array}
	\end{equation}
	for $2<p_1<\infty$ and $q_1$ defined through the relation $\frac{1}{p_1}+\frac{1}{q_1}=\frac{1}{2}$. Moreover, for any $T'\in(0,T)$ there exists a neighborhood $V$ of $u_0$ in  $H^s(\R)$ such that the map $\widetilde{u}_0\mapsto \widetilde{u}(t)$, from $V$ into the class defined by \eqref{classost}, with $T'$ instead of $T$, is Lipschitz. 
\end{innercustomthm}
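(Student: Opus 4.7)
The plan is to establish local well-posedness of \eqref{OST} in $H^s(\R)$, $s\ge 0$, via the Duhamel formulation
$$ u(t) = S(t)u_0 - \int_0^t S(t-\tau)(u\partial_x u)(\tau)\,d\tau, $$
where $S(t)$ is the linear semigroup whose Fourier symbol is $e^{t(i\xi^3 + \eta|\xi| - \eta|\xi|^3)}$, and then applying the Banach contraction principle to $\Psi(u)(t):=S(t)u_0-\int_0^t S(t-\tau)(u\partial_x u)(\tau)d\tau$ on a closed ball of the function space described by \eqref{classost}. The key structural observation is that the real part of the symbol equals $\eta t(|\xi|-|\xi|^3)$, which is strictly negative for $|\xi|>1$; therefore $S(t)$ combines Airy-type dispersion (from the phase $\xi^3$) with a parabolic smoothing of order three at high frequencies, and this extra dissipation will be the main tool for absorbing derivative losses from the nonlinearity.

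First I would develop the linear theory for $S(t)$. Using stationary-phase estimates (Van der Corput) for the KdV phase $\xi^3$ combined with the dissipative factor $e^{-\eta t|\xi|^3}$, I would prove the homogeneous estimates
$$\|\partial_x S(\cdot)u_0\|_{L^2_T L^{p_1}_x}+\|D^s\partial_x S(\cdot)u_0\|_{L^2_T L^{p_1}_x}\lesssim \|u_0\|_{H^s},$$
and
$$\|S(\cdot)u_0\|_{L^2_T L^{q_1}_x}+\|D^s S(\cdot)u_0\|_{L^2_T L^{q_1}_x}\lesssim \|u_0\|_{H^s},$$
together with their inhomogeneous counterparts for the Duhamel term, obtained via a $TT^*$ argument. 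The parabolic damping is precisely what allows the full range $2<p_1<\infty$ without the usual sharp admissibility constraint.

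Next I would estimate the nonlinearity. Writing $u\partial_x u=\tfrac12\partial_x(u^2)$ and using H\"older in space-time together with the Kenig-Ponce-Vega fractional Leibniz rule, the relation $1/p_1+1/q_1=1/2$ yields an estimate of the form
$$\Big\|\int_0^t S(t-\tau)(u\partial_x u)(\tau)\,d\tau\Big\|_X\lesssim T^{\theta}\|u\|_X^2,$$
for some $\theta>0$, where $X$ denotes the resolution space of \eqref{classost}. Combining this with the linear estimates gives $\|\Psi(u)\|_X\le C\|u_0\|_{H^s}+CT^{\theta}\|u\|_X^2$ and a matching Lipschitz bound for $\Psi(u)-\Psi(v)$; both close the contraction on a ball of radius $2C\|u_0\|_{H^s}$ in $X$ for $T$ small enough.

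The main obstacle is arranging the nonlinear estimate so that a strictly positive power of $T$ is gained. This requires a careful balance between the one derivative coming from $\partial_x(u^2)$, the dispersive smoothing carried by $\partial_x S(t-\tau)$, and the placement of the two factors of $u$ in the mixed norms $L^2_T L^{p_1}_x$ and $L^2_T L^{q_1}_x$; a naive H\"older application typically produces an estimate with $T^0$ that is not enough to close the contraction and forces one to use either an auxiliary maximal function bound or a genuine smoothing of fractional order from the parabolic factor. Once these estimates are in place, continuity of the flow, uniqueness in the resolution space, and persistence in $C([0,T];H^s(\R))$ follow from standard contraction-principle arguments, and Lipschitz dependence of the data-to-solution map is inherited directly from the Lipschitz contraction estimate.
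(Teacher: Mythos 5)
Your proposal is correct and follows essentially the same route as the cited proof of this theorem (Carvajal--Scialom), which the paper summarizes in Section 3.2: a contraction argument for the Duhamel map in a mixed-norm resolution space built from Strichartz-type estimates and a refined smoothing effect of the semigroup $V(t)$ that exploits the dissipative part $-\eta(|\xi|^3-|\xi|)$ of the symbol, with the nonlinearity handled by the fractional Leibniz rule and H\"older via $1/p_1+1/q_1=1/2$. The obstacle you flag — extracting a positive power of $T$ — is resolved exactly as you suggest: the smoothing estimate $\|\partial_x V(t)u_0\|_{L^2_TL^\infty_x}\le CT^\gamma\|D^s_xu_0\|_{L^2}$ (Lemma 3.2 in the paper) carries the factor $T^\gamma$, and the remaining norms are weighted by $T^{-\gamma(p_1)}$ in the definition of the resolution space.
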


Using Theorem \ref{linearteo} we get the respective well-posedness in weighted spaces for $s>0$; more precisely.

\begin{theorem}\label{persistost}
	Let $u_0\in Z_{s,b}$ with $0<s<2$ and $b\le s/2$. There exist $T=T(\|u_0\|_{Z_{s,b}})>0$ and a unique $u$ in the class defined by \eqref{classost}, with $Z_{s,b}$ instead of $H^s(\R)$, which is the solution of the IVP \eqref{OST}. Moreover, for any $T'\in(0,T)$ there exists a neighborhood $V$ of $u_0$ in $Z_{s,b}$ such that the map $\widetilde{u}_0\mapsto \widetilde{u}(t)$, from $V$ into the class defined by \eqref{classost}, with $Z_{s,b}$ instead of $H^s(\R)$ and $T'$ instead of $T$, is Lipschitz. 
\end{theorem}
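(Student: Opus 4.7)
The plan is to run a Banach fixed-point argument in a function space that adds a weighted $L^\infty_T L^2_x$ norm to the one that already appears in the proof of Theorem \ref{t1.1}. Writing $\{U(t)\}_{t\ge 0}$ for the linear OST semigroup, Duhamel gives
\[
\Psi(u)(t)=U(t)u_0-\int_0^t U(t-t')(u\,\partial_x u)(t')\,dt',
\]
and I would seek a fixed point of $\Psi$ in the closed ball
\[
\mathcal{B}^a_T=\bigl\{u\in X_T : \|u\|_{X_T}+\||x|^b u\|_{L^\infty_T L^2_x}\le a\bigr\},
\]
where $X_T$ is the space carrying the mixed-norm estimates \eqref{classost} used in \cite{XM} and $a\sim\|u_0\|_{Z_{s,b}}$. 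Control of $\|\Psi(u)\|_{X_T}$ is already provided by the smoothing/Strichartz estimates underlying Theorem \ref{t1.1}, so the new task is to estimate $\||x|^b\Psi(u)(t)\|_{L^2}$ in terms of the ball radius $a$.

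For the linear piece I would invoke Theorem \ref{linearteo} with the OST phase; since the dominant term $\xi^3$ gives $g(\xi)\lesssim 1+|\xi|^2$, one has $K=2$, so the hypothesis $b\le s/2$ of Theorem \ref{persistost} is exactly the one allowed, yielding
\[
\||x|^b U(t)u_0\|_{L^2}\le C(1+t)\|u_0\|_{s,2}+C\||x|^b u_0\|_{L^2}.
\]
For the Duhamel contribution I pass the weight inside by Minkowski in $L^2_x$, apply Theorem \ref{linearteo} pointwise in $t'$, and reduce matters to bounding
\[
\int_0^T\bigl[(1+T)\|(u\,\partial_xu)(t')\|_{s,2}+\||x|^b(u\,\partial_xu)(t')\|_{L^2}\bigr]dt'.
\]
The first integrand is handled by the fractional Leibniz rule combined with the mixed norms in $X_T$ exactly as in the proof of Theorem \ref{t1.1}, producing after Cauchy--Schwarz a gain of order $T^{1/2}(1+T)\|u\|_{X_T}^2$. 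For the weighted integrand I split the weight onto one factor,
\[
\||x|^b u\,\partial_x u\|_{L^2}\le \||x|^b u\|_{L^2}\|\partial_x u\|_{L^\infty_x},
\]
so that Cauchy--Schwarz in time yields a bound by $T^{1/2}\||x|^b u\|_{L^\infty_T L^2_x}\|\partial_x u\|_{L^2_T L^\infty_x}$, the last factor being controlled by the mixed norms defining $X_T$ (via Sobolev embedding, possibly after interpolation).

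Combining these estimates and choosing $T$ small in terms of $\|u_0\|_{Z_{s,b}}$ shows that $\Psi$ maps $\mathcal{B}^a_T$ into itself. Repeating the same chain of inequalities with $u\partial_x u-v\partial_x v$ in place of $u\partial_x u$ gives the contraction bound and, simultaneously, the Lipschitz continuous dependence on data that the statement requires. The main obstacle I anticipate is the weighted nonlinear estimate for $s$ near $0$: one needs $\|\partial_x u\|_{L^2_TL^\infty_x}$ (or an equivalent mixed-norm quantity) finite, and for such low $s$ this is not immediate from the $H^s$ component of $Z_{s,b}$, forcing a careful use of the Kenig--Ponce--Vega type smoothing effect for the OST semigroup and possibly a slight adjustment of the admissible pair $(p_1,q_1)$ used to define $X_T$. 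Everything else is a routine combination of Theorem \ref{linearteo} with the ingredients already present in the proof of Theorem \ref{t1.1}.
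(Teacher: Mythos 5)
Your proposal is correct and follows essentially the same route as the paper: a contraction in the space of Theorem \ref{t1.1} augmented by $\||x|^b u\|_{L^\infty_T L^2_x}$, with Theorem \ref{linearteo} (here $K=2$, so $b\le s/2$) handling both the free evolution and the Duhamel term, and the weighted nonlinearity split as $\||x|^b u\,\partial_x u\|_{L^2}\le \||x|^b u\|_{L^2}\|\partial_x u\|_{L^\infty_x}$. The obstacle you flag at the end is resolved in the paper exactly as you anticipate: the norm $T^{-\gamma}\|\partial_x u\|_{L^2_T L^\infty_x}$ is added to the contraction space and controlled by the refined smoothing effect of Lemma \ref{corol} (Corollary 2.2 of \cite{XM}), which gives $\|\partial_x V(t)u_0\|_{L^2_TL^\infty_x}\le CT^{\gamma}\|D^s u_0\|_{L^2}$ with $\gamma=\min\{1/2,2s/3\}$ for all $0<s\le 1$.
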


Next we consider two fifth-order models that  can also be handled with our theory. 
The first model we are interested in is the Kawahara equation. Consider the IVP
\begin{equation}\label{kawaivp}
	\begin{cases}
		\partial_t u + \alpha u \partial_x u+\beta\partial_x^3 u + \gamma \partial_x^5 u=0, \ \ (x,t)\in \R\times\R,\\
		u(x,0)=u_0(x),
	\end{cases}
\end{equation}
where $\alpha, \beta,\gamma$ are real numbers with $\alpha\gamma\neq0$.

The Kawahara equation was derived in \cite{Kawahara} as a model equation describing
solitary-wave propagation in media where the first-order dispersion coefficient
 is anomalously small. It also arises in modeling gravity-capillary waves on a shallow layer and magneto-sound
 propagation in plasma. Several results for the IVP  \eqref{kawaivp} can be found in the current literature. In particular, the local well-posedness in the Sobolev spaces was established in \cite{KAWA}. More precisely, the following theorem was proved.

\begin{innercustomthm}$($\cite[Theorem 3.5]{KAWA}$)$\label{localkawa}
	Let $u_0\in H^s(\R)$, $s>1/4$. There exists  $T>0$, depending on $\alpha, \beta, \gamma$ and $\|u_0\|_{s,2}$, such that  \eqref{kawaivp} has a unique solution  satisfying
	\begin{equation}\label{kawaclass}
		\begin{array}{lll}
			u\in C([-T,T];H^s(\R)),&\|u\|_{L^4_xL^\infty_T}<\infty,&\|\partial_x u\|_{L^4_TL^\infty_x}<\infty,\\ \|D^{s+2}_x u\|_{L^\infty_xL^2_T}<\infty,\ \ \ \mbox{and}
			\ \  &\|D^s\partial_x u\|_{L^4_xL^2_T}<\infty.
		\end{array}
	\end{equation}
	Moreover, for any $T'\in(0,T)$ there exists a neighborhood $V$ of $u_0$ in  $H^s(\R)$ such that the map $\widetilde{u}_0\mapsto \widetilde{u}(t)$, from $V$ into the class defined by \eqref{kawaclass}, with $T'$ instead of $T$, is Lipschitz. 
\end{innercustomthm}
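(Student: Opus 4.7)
The plan is to prove local well-posedness by the Banach fixed-point theorem applied to the Duhamel formulation
\begin{equation*}
\Psi(u)(t) = U(t)u_0 - \alpha\int_0^t U(t-t')\bigl(u(t')\partial_x u(t')\bigr)\, dt',
\end{equation*}
where $\{U(t)\}_{t\in\R}$ is the unitary group associated with the phase $\phi(\xi) = \beta\xi^3 - \gamma\xi^5$. Since $\phi$ satisfies \eqref{AA} and \eqref{BB} with $a=5$, Stone's theorem ensures that $U(t)$ is unitary on $H^s(\R)$ for every $s\in\R$, and the starting point is to produce a package of linear estimates on $U(t)u_0$ tailored to each auxiliary norm appearing in \eqref{kawaclass}.

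The first step I would carry out is the derivation of the sharp Kato smoothing bound $\|D_x^{s+2}U(t)u_0\|_{L^\infty_x L^2_T}\ls\|u_0\|_{s,2}$, which exploits that $|\phi'(\xi)|\sim|\xi|^4$ at high frequencies; a maximal-function estimate $\|U(t)u_0\|_{L^4_x L^\infty_T}\ls(1+T)^{\theta}\|u_0\|_{s,2}$, valid for $s>1/4$, obtained by dyadic decomposition of $U(t)u_0$ and stationary-phase analysis of the oscillatory kernel at each dyadic frequency scale; and the Strichartz-type inequalities $\|\partial_x U(t)u_0\|_{L^4_T L^\infty_x}\ls\|u_0\|_{s,2}$ and $\|D_x^s\partial_x U(t)u_0\|_{L^4_x L^2_T}\ls\|u_0\|_{s,2}$. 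The corresponding inhomogeneous versions needed to estimate the Duhamel integral follow by duality together with a Christ--Kiselev-type argument.

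I would then introduce the Banach space $X_T$ consisting of those $u$ for which the norm
\begin{equation*}
\|u\|_{X_T}:=\|u\|_{L^\infty_T H^s_x}+\|u\|_{L^4_x L^\infty_T}+\|\partial_x u\|_{L^4_T L^\infty_x}+\|D^{s+2}_x u\|_{L^\infty_x L^2_T}+\|D^s_x\partial_x u\|_{L^4_x L^2_T}
\end{equation*}
is finite, and show that $\Psi$ leaves invariant a ball of radius $2C\|u_0\|_{s,2}$ and is a contraction on it for $T$ small enough. The crucial nonlinear estimate takes the form
\begin{equation*}
\Bigl\|\int_0^t U(t-t')(u\partial_x u)\, dt'\Bigr\|_{X_T}\ls T^{\delta}\,\|u\|_{X_T}^2
\end{equation*}
for some $\delta>0$; after invoking the inhomogeneous linear estimates inside the integral, this reduces, via H\"older's inequality in the mixed norms, to controlling products of the type $\|\partial_x u\|_{L^4_T L^\infty_x}\|u\|_{L^4_x L^\infty_T}$ and $\|D_x^s\partial_x u\|_{L^4_x L^2_T}\|u\|_{L^4_x L^\infty_T}$, which are already absorbed into $\|u\|_{X_T}^2$. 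Uniqueness and the Lipschitz continuous dependence on the initial data follow by applying the same multilinear estimates to the difference $u-v$ of two solutions and choosing $T'$ so small that the bilinear coefficient is strictly less than $1$.

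The step I expect to be the main obstacle is establishing the maximal-function estimate at the sharp threshold $s>1/4$. A Littlewood--Paley decomposition $U(t)u_0=\sum_N P_N U(t)u_0$ reduces the question to frequency-localized oscillatory integrals with phase $\xi\mapsto x\xi+t\phi(\xi)$; one must apply Van der Corput-type bounds at each dyadic scale $|\xi|\sim N$ while carefully tracking the stationary points arising from the competition between the third- and fifth-order terms in $\phi'$. Summing the resulting dyadic pieces in the $L^4_xL^\infty_T$-norm converges exactly when $s>1/4$, and matching this numerology with the derivative distribution in the nonlinear estimate is the technical heart of the argument.
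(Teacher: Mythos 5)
This statement is quoted from Cui--Tao \cite{KAWA} and the paper does not reprove it; its proof there is exactly the Kenig--Ponce--Vega scheme you describe (local smoothing gaining two derivatives, an $L^4_xL^\infty_T$ maximal function estimate at $s>1/4$, Strichartz bounds, and a contraction in the space built from the norms of \eqref{kawaclass}, as summarized in Section 3.3 via the norm $\Lambda^T$). Your proposal is correct and follows essentially the same route.
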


By using Bourgain's spaces, the Sobolev index for the local well-posedness of \eqref{kawaivp}  may be pushed down to $s>-7/4$ (see, for instance, \cite{chenetal}). However, since Theorem \ref{localkawa} was established with the technique introduced in \cite{KPV} which uses Strichartz's estimates, smoothing effects, and a maximal function estimate combined with the contraction mapping principle, it  is enough to our purposes. Here we present  the respective local well-posedness result in weighted spaces.

\begin{theorem}\label{persistkawa}
	In addition to hypotheses of Theorem \ref{localkawa}, assume $u_0\in L^2(|x|^{2b}dx)$ for $b\le s/4$ and $1/4< s<4$. There exists $T=T(\|u_0\|_{Z_{s,b}})>0$ such that  \eqref{kawaivp} has a unique solution $u$ in the class defined by \eqref{kawaclass} with $Z_{s,b}$ instead of $H^s(\R)$. 
	Moreover, for any $T'\in(0,T)$ there exists a neighborhood $V$ of $u_0\in Z_{s,b}$ such that the map $\widetilde{u}_0\mapsto \widetilde{u}(t)$, from $V$ into the class defined by \eqref{kawaclass}, with $Z_{s,b}$ instead of $H^s(\R)$ and $T'$ instead of $T$, is Lipschitz.
\end{theorem}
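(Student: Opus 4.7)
The plan is to adapt the contraction argument of \cite{KAWA} used to prove Theorem \ref{localkawa} by enlarging the working Banach space to keep track of the weight. Let $\{U(t)\}_{t\in\R}$ be the unitary group associated with the linear Kawahara operator $L=\beta\partial_x^3+\gamma\partial_x^5$; its phase $\Phi(\xi)=-\beta\xi^3+\gamma\xi^5$ is the sum of two monomials of the type in Example (1) with $k_1=2$ and $k_2=4$, hence $K=4$. This matches exactly the hypotheses of Theorem \ref{linearteo} for the range $1/4<s<4$ and $0<b\le s/4$ under consideration. Define $X^{s,b}_T$ as the subset of the class \eqref{kawaclass}, with $Z_{s,b}$ in place of $H^s(\R)$, endowed with the norm used in \cite{KAWA} augmented by the additional quantity $\||x|^b u\|_{L^\infty_T L^2_x}$, and consider the Duhamel map
\[
\Psi(u)(t):=U(t)u_0-\alpha\int_0^t U(t-t')(u\partial_x u)(t')\,dt'.
\]

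All unweighted pieces of the norm of $\Psi(u)$ are controlled exactly as in \cite{KAWA}, so only the weighted piece is new. Applying Theorem \ref{linearteo} to $U(t)u_0$ and, inside the Duhamel integral, to $U(t-t')(u\partial_x u)(t')$ at each $t'$, we obtain
\[
\||x|^b\Psi(u)(t)\|_{L^2}\le C(1+T)\|u_0\|_{s,2}+C\||x|^b u_0\|_{L^2}+C(1+T)\int_0^T\|(u\partial_x u)(t')\|_{s,2}\,dt'+C\int_0^T\||x|^b(u\partial_x u)(t')\|_{L^2}\,dt'.
\]
The third term is controlled by the $H^s$-bilinear estimate already established in \cite{KAWA}, producing a factor of the form $T^\theta\|u\|^2$. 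For the fourth term, H\"older's inequality in $x$ followed by H\"older in $t$ yields
\[
\int_0^T\||x|^b(u\partial_x u)(t')\|_{L^2_x}\,dt'\le\||x|^b u\|_{L^\infty_T L^2_x}\|\partial_x u\|_{L^1_T L^\infty_x}\le T^{3/4}\||x|^b u\|_{L^\infty_T L^2_x}\|\partial_x u\|_{L^4_T L^\infty_x},
\]
so the right-hand side involves only norms already present in $X^{s,b}_T$ and carries a small positive power of $T$.

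Differences $\Psi(u)-\Psi(v)$ are handled identically after writing $u\partial_x u-v\partial_x v=(u-v)\partial_x u+v\partial_x(u-v)$ and placing the weight $|x|^b$ on the non-derivative factor in each product, giving two terms each bounded by $T^{3/4}$ times products of norms in $X^{s,b}_T$. This yields a contraction for $T$ sufficiently small, depending only on $\|u_0\|_{Z_{s,b}}$, and the Lipschitz dependence on the datum in $Z_{s,b}$ follows from the same computations applied to two solutions; continuity in $t$ with values in $L^2(|x|^{2b}dx)$ is a routine dominated-convergence argument. The main obstacle is simply the bookkeeping: the $(1+T)$ factor coming from Theorem \ref{linearteo} must combine harmlessly with the $T^\theta$ gains of the Strichartz-type estimates from \cite{KAWA} on the unweighted side and with the $T^{3/4}$ factor on the weighted side; since $(1+T)$ is bounded on any fixed interval and every other factor carries a positive power of $T$, the contraction closes by shrinking $T$ exactly as in Theorem \ref{localkawa}.
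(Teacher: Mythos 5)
Your proposal is correct and follows essentially the same route as the paper: augment the contraction space of \cite{KAWA} with the norm $\||x|^b u\|_{L^\infty_T L^2_x}$, apply Theorem \ref{linearteo} (with $K=4$, $b\le s/4$) to the free evolution and inside the Duhamel integral, control the $H^s$ piece by the bilinear estimate of \cite{KAWA} (Lemma \ref{lemonkawa}), and bound the weighted piece by H\"older as $T^{3/4}\||x|^b u\|_{L^\infty_T L^2_x}\|\partial_x u\|_{L^4_T L^\infty_x}$. The paper's only cosmetic difference is that it applies Cauchy--Schwarz in time first and then H\"older, arriving at the same $T^{3/4}$ factor.
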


The second fifth-order model we consider is when we replace the first-order derivative in the nonlinear part of \eqref{kawaivp} by a second-order derivative. More precisely,  we consider the following IVP
\begin{equation}\label{kkivp}
	\begin{cases}
		\partial_t u + \alpha u \partial_x^2 u+\beta\partial_x^3 u + \gamma \partial_x^5 u=0, \quad
		 (x,t)\in \R\times\R,\\
		u(x,0)=u_0(x),
	\end{cases}
\end{equation}
where  again  $\alpha, \beta,\gamma$ are real numbers with $\alpha\gamma\neq0$. The  local well-posedness of  \eqref{kkivp} in $H^s(\R)$  was established in \cite{KK}. More precisely, the following result was shown.

\begin{innercustomthm}$($\cite[Theorem 1.1]{KK}$)$\label{localkk}
	Suppose $\beta\gamma<0$. Let $u_0\in H^s(\R)$, $s\ge5/4$. There exists $T=T(\|u_0\|_{s,2})>0$ such that  \eqref{kkivp} has a unique solution  satisfying
	\begin{equation}\label{kklas}
		\begin{array}{llll}
			u\in C([-T,T];H^s(\R)),&\|u\|_{L^2_xL^\infty_T}<\infty,&\|\partial_x^2 u\|_{L^4_TL^\infty_x}<\infty,\ \\ \mbox{and}\ \ 	\|D^s\partial_x^2 u\|_{L^\infty_xL^2_T}<\infty.
		\end{array}
	\end{equation}
 Moreover, for any $T'\in(0,T)$ there exists a neighborhood $V$ of $u_0$ in  $H^s(\R)$ such that the map $\widetilde{u}_0\mapsto \widetilde{u}(t)$, from $V$ into the class defined by \eqref{kklas}, with $T'$ instead of $T$, is Lipschitz. 
\end{innercustomthm}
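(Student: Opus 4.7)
The plan is to run the Kenig-Ponce-Vega contraction scheme on the Duhamel formulation
$$\Psi(u)(t)=U(t)u_0-\alpha\int_0^tU(t-t')\bigl(u\partial_x^2 u\bigr)(t')\,dt',$$
where $U(t)$ denotes the unitary group associated with $\partial_t+\beta\partial_x^3+\gamma\partial_x^5$, and the working metric space $\mathcal{X}_T$ is endowed with the sum of the four norms appearing on the right-hand side of \eqref{kklas}. The goal is to show that, for $T$ sufficiently small depending on $\|u_0\|_{s,2}$, the map $\Psi$ sends a closed ball of $\mathcal{X}_T$ into itself and is a strict contraction there; uniqueness in the class \eqref{kklas} and Lipschitz dependence on the initial data then follow from the same computations applied to the difference of two solutions.

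The first ingredient is a triple of linear estimates for the fifth-order group. The hypothesis $\beta\gamma<0$ is crucial because it forces $\phi'(\xi)=-3\beta\xi^2+5\gamma\xi^4$ to vanish only at $\xi=0$, so the associated oscillatory integrals retain a non-degenerate phase away from the origin. This yields a Kato-type local smoothing effect with gain of two derivatives
$$\|D_x^{s+2}U(t)u_0\|_{L^\infty_xL^2_T}\lesssim\|u_0\|_{s,2},$$
a Strichartz-type bound $\|\partial_x^2 U(t)u_0\|_{L^4_TL^\infty_x}\lesssim\|u_0\|_{s,2}$ valid for $s\ge 5/4$, and a maximal function estimate $\|U(t)u_0\|_{L^2_xL^\infty_T}\lesssim\|u_0\|_{s,2}$, together with their inhomogeneous and dual versions that transfer directly to the Duhamel integral.

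The heart of the argument is the nonlinear estimate controlling the forcing term $u\partial_x^2 u$ in the space dual to the one produced by the smoothing effect. Using a Kato-Ponce fractional Leibniz rule one decomposes
$$D_x^s(u\partial_x^2 u)\sim uD_x^s\partial_x^2 u+(\partial_x^2u)D_x^su+\text{(commutator)},$$
and the two principal terms are bounded by
$$\|uD_x^s\partial_x^2 u\|_{L^1_TL^2_x}\lesssim T^{1/2}\|u\|_{L^2_xL^\infty_T}\|D_x^s\partial_x^2 u\|_{L^\infty_xL^2_T}$$
and
$$\|(\partial_x^2 u)D_x^s u\|_{L^1_TL^2_x}\lesssim T^{3/4}\|\partial_x^2 u\|_{L^4_TL^\infty_x}\|D_x^s u\|_{L^\infty_TL^2_x},$$
while the commutator is absorbed by a Kato-Ponce inequality. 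Summing the four components of the $\mathcal{X}_T$-norm produces the bound $\|\Psi(u)\|_{\mathcal{X}_T}\lesssim\|u_0\|_{s,2}+T^\theta\|u\|_{\mathcal{X}_T}^2$ for some $\theta>0$, which closes the contraction for small $T$. The main obstacle is precisely the two-derivative nonlinearity $u\partial_x^2 u$: it forces $\partial_x^2 u$ to be measured in the Strichartz norm $L^4_TL^\infty_x$, which in turn dictates the threshold $s\ge 5/4$, much more restrictive than the $s>1/4$ sufficient for Theorem \ref{localkawa}, where the weaker nonlinearity $u\partial_xu$ demands only one derivative to be controlled through Strichartz.
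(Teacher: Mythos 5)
This theorem is quoted from \cite{KK} rather than reproved in the paper, but your outline coincides with the argument the paper attributes to that reference: a Kenig--Ponce--Vega contraction in the space built from the four norms of \eqref{kklas} (the paper's $\Lambda^T$), using the two-derivative Kato smoothing effect, the $L^4_TL^\infty_x$ Strichartz estimate for $\partial_x^2 W(t)u_0$ (which is exactly what forces $s\ge 5/4$), the $L^2_xL^\infty_T$ maximal function bound, and the nonlinear estimate $\|u\partial_x^2u\|_{L^2_TH^s}\lesssim \Lambda^T(u)^2$ recorded as Lemma \ref{lemon}. Your mixed-norm H\"older estimates for $uD^s\partial_x^2u$ and $(\partial_x^2u)D^su$ are correct and reproduce that lemma, so the proposal is essentially the same proof.
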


In section 4 we prove the respective well-posedness result in the weighted spaces.

\begin{theorem}\label{persistkk}
	In addition to the hypotheses of Theorem \ref{localkk}, assume $u_0\in L^2(|x|^{2b}dx)$ for $b\le s/4$ and $5/4\le s<4$. There exists $T=T(\|u_0\|_{Z_{s,b}})>0$ such that  \eqref{kkivp} has a unique solution $u$ in the class defined by \eqref{kklas} with $Z_{s,b}$ instead of $H^s(\R)$. 
Moreover, for any $T'\in(0,T)$ there exists a neighborhood $V$ of $u_0\in Z_{s,b}$ such that the map $\widetilde{u}_0\mapsto \widetilde{u}(t)$, from $V$ into the class defined by \eqref{kklas}, with $Z_{s,b}$ instead of $H^s(\R)$ and $T'$ instead of $T$, is Lipschitz.
\end{theorem}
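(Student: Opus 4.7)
The plan is to run a contraction argument on a modification of the function space already used in \cite{KK} to prove Theorem~\ref{localkk}. First I would verify that the linear part of \eqref{kkivp} falls under Theorem~\ref{linearteo}: the phase function is $\Phi(\xi)=-\beta\xi^{3}+\gamma\xi^{5}$, which decomposes as the sum of $\phi_{1}(\xi)=-\beta\xi^{3}$ and $\phi_{2}(\xi)=\gamma\xi^{5}$, both covered by Example~(1) with $g_{1}(\xi)=C|\xi|^{2}$ and $g_{2}(\xi)=C|\xi|^{4}$; thus $K=\max\{2,4\}=4$, which is exactly the value appearing in the hypothesis $b\le s/4$ and $s<4$.

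Next, let $X_{T}^{s,b}$ be the class in \eqref{kklas} with $Z_{s,b}$ replacing $H^{s}(\R)$, equipped with the norm used in \cite{KK} augmented by $\||x|^{b}u\|_{L^{\infty}_{T}L^{2}_{x}}$. On a ball of $X_{T}^{s,b}$ consider the Duhamel map
$$
\Psi(u)(t)=U(t)u_{0}-\alpha\int_{0}^{t}U(t-t')\bigl(u\,\partial_{x}^{2}u\bigr)(t')\,dt',
$$
where $\{U(t)\}_{t\in\R}$ is the unitary group generated by $\beta\partial_{x}^{3}+\gamma\partial_{x}^{5}$. The bounds for $\Psi$ in all the \emph{unweighted} norms appearing in \eqref{kklas} are precisely those already established in \cite{KK}, so the only new ingredient is the control of $\||x|^{b}\Psi(u)(t)\|_{L^{2}}$.

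For this new estimate I would apply Theorem~\ref{linearteo} pointwise in $t$ and, via Minkowski, under the $t'$--integral:
$$
\||x|^{b}\Psi(u)(t)\|_{L^{2}}\le C(1+|t|)\|u_{0}\|_{s,2}+\||x|^{b}u_{0}\|_{L^{2}}
+|\alpha|\!\int_{0}^{t}\!\Bigl[C(1+|t-t'|)\|(u\partial_{x}^{2}u)(t')\|_{s,2}+\||x|^{b}(u\partial_{x}^{2}u)(t')\|_{L^{2}}\Bigr]dt'.
$$
The $H^{s}$-integrand is handled exactly by the bilinear/Leibniz estimates already proved in \cite{KK}, which for $T\le 1$ produce a bound of the form $CT^{\theta}\|u\|_{X_{T}^{s,b}}^{2}$ for some $\theta>0$. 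For the genuinely weighted integrand I would use the pointwise factorization
$$
\||x|^{b}(u\partial_{x}^{2}u)(t')\|_{L^{2}_{x}}\le \|\partial_{x}^{2}u(t')\|_{L^{\infty}_{x}}\,\||x|^{b}u(t')\|_{L^{2}_{x}},
$$
followed by H\"older in time with exponents $4$ and $4/3$:
$$
\int_{0}^{T}\|\partial_{x}^{2}u(t')\|_{L^{\infty}_{x}}\|\,|x|^{b}u(t')\|_{L^{2}_{x}}\,dt'\le T^{3/4}\,\|\partial_{x}^{2}u\|_{L^{4}_{T}L^{\infty}_{x}}\,\||x|^{b}u\|_{L^{\infty}_{T}L^{2}_{x}},
$$
which is controlled directly by the $X_{T}^{s,b}$-norm thanks to the $\|\partial_{x}^{2}u\|_{L^{4}_{T}L^{\infty}_{x}}$ piece in \eqref{kklas}. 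Combining these, on a ball of radius $R$ we obtain a bound of the shape $C\|u_{0}\|_{Z_{s,b}}+CT^{\theta}R^{2}$, and taking $T$ small enough in terms of $\|u_{0}\|_{Z_{s,b}}$ makes $\Psi$ a self-map of the ball.

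The contraction estimate for $\Psi(u)-\Psi(v)$ is obtained by the same scheme after expanding $u\partial_{x}^{2}u-v\partial_{x}^{2}v=(u-v)\partial_{x}^{2}u+v\partial_{x}^{2}(u-v)$; this simultaneously gives uniqueness and the claimed Lipschitz continuous dependence on a neighborhood of $u_{0}$ in $Z_{s,b}$. The main technical obstacle I foresee is purely organizational: the factors $(1+|t|)$ and $(1+|t-t'|)$ produced by Theorem~\ref{linearteo} must be absorbed into the powers of $T$ coming from H\"older in time, which forces $T\le 1$ but is otherwise harmless, and one must check that the ranges $5/4\le s<4$ and $b\le s/4$ are simultaneously compatible with the product estimates from \cite{KK} and with the restriction $s<K=4$ needed to apply Theorem~\ref{linearteo}.
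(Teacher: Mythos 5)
Your proposal is correct and follows essentially the same route as the paper: the paper proves Theorem \ref{persistkk} by adding the norm $\max_{[-T,T]}\||x|^b w\|_{L^2_x}$ to the contraction space of \cite{KK} and repeating the argument of Theorem \ref{persistkawa}, i.e.\ applying \eqref{trocas} under the Duhamel integral, invoking Lemma \ref{lemon} for the $H^s$ part of the nonlinearity, and bounding the weighted part by $\||x|^b u\|_{L^\infty_T L^2_x}\|\partial_x^2 u\|_{L^4_T L^\infty_x}$ via H\"older in time, exactly as you do. Your identification of $K=4$ (hence $b\le s/4$, $s<4$) and the absorption of the $(1+|t-t'|)$ factors by taking $T\le 1$ also match the paper's treatment.
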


To stand out the importance of studying dispersive equations in weighted Sobolev spaces, the final  part of this work is devoted to study dispersive blow-up properties. The phenomenon of  dispersive blow-up was first identified in \cite{BBM} for the linear KdV equation. Roughly speaking the authors proved the existence of an infinitely smooth bounded  initial data such that the corresponding solution blows-up in finite time in the $L^\infty$ norm.  The pioneer mathematical work studying the existence of solutions for nonlinear dispersive equations presenting a behavior similar to the linear KdV is due Bona and Saut \cite{Bona}. In that paper the authors considered the generalized nonlinear KdV equation 
$$
\partial_t u+\partial_x^3u+u^k\partial_xu=0, \quad k\in\Z^+,
$$
and constructed initial data in 
 $H^\ell(\R)\cap C^\infty(\R)$, for a suitable choice of $\ell$, such that the corresponding solution
 satisfies
\begin{equation}\label{disp}
	\lim_{(x,t)\to(x_*,t_*)}|u(x,t)|=+\infty,
\end{equation}
where $(x_*,t_*)$ is a point in $ \R\times (0,\infty)$; moreover, the solution $u$ is continuous except at $(x_*,t_*)$. The strategy of the authors was first to construct a solution of the linear problem satisfying \eqref{disp} and then, using the decay properties of the solutions  in weighted spaces, they showed that the nonlinear part do not destroy that behavior.   This emphasizes the linear feature of this kind of singularity and makes it different, for instance, of the blow-up in Sobolev norms where the effects of the nonlinearity are stronger. 

After that, in \cite{LinaresScialom}, the authors addressed the same question for $k\geq2$ but with a simplified approach. Indeed, the authors showed that in this situation is sufficient to show that the integral part of the solution in the Duhamel formulation is more regular than the linear one. More precisely, they established if the initial datum belong to $H^s(\R),s\geq1$, then the corresponding integral part belongs to $C(\R; H^{s+1}(\R))$. This was enough to prove the existence of dispersive blow-up. More recently, in \cite{2017}, using fractional
weighted spaces, the authors also improved the results of \cite{Bona} in the case $k = 1$, i.e., for the standard KdV equation. For additional results concerning dispersive blow-up we refer the reader to \cite{BS2}, \cite{BPSS}, \cite{Ademir}, \cite{palacios}.

 Although the ideas employed below may be applied to several models, we will pay particular attention to the Kawahara equation and the Hirota-Satsuma system. More precisely, our main theorems in this direction are listed below.

\begin{theorem}\label{dispkawa}
Assume $\gamma<0$ and $3\beta+10\gamma>0$.	There exists an initial data $u_0\in C^\infty(\R)\cap {H^{7/2}}^{-}(\R)$ such that the solution $u\in C([0,T];{H^{7/2}}^{-}(\R))$ of the IVP \eqref{kawaivp} given by Theorem \ref{persistkawa} satisfies $$u(\cdot, t^*)\in C^3(\R\setminus\{0\}) \quad \mbox{and}\quad u(\cdot, t^*)\notin C^3(\R),$$ for some $t^*\in (0,T)$.
\end{theorem}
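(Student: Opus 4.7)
Following the Bona--Saut paradigm, the dispersive singularity lives entirely in the linear group while the Duhamel contribution is strictly smoother. Writing
\begin{equation*}
u(t) = U(t)u_0 + z(t), \qquad z(t) := -\alpha\int_0^t U(t-t')(u\partial_x u)(t')\,dt',
\end{equation*}
where $U(t)$ is the unitary group with symbol $e^{it(\beta\xi^3-\gamma\xi^5)}$, I would decouple the argument into (i) a regularization bound for $z$ and (ii) an explicit construction of $u_0$ making $U(t^*)u_0$ singular at exactly one point.

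For step (i), fix $s$ slightly below $7/2$ and $b=s/4$, and invoke Theorem \ref{persistkawa} to produce $u\in C([0,T];Z_{s,b})$ together with the auxiliary norms in the resolution class \eqref{kawaclass}. Rewriting $u\partial_x u=\tfrac12\partial_x(u^2)$ and combining the two-derivative Kato smoothing effect for the Kawahara group with a fractional Leibniz rule applied to $D^{s}(u^2)$, one can bound $\|z\|_{L^\infty_T H^{s+\eta}_x}$ for some $\eta>0$ fixed. Provided $s$ is taken close enough to $7/2$ so that $s+\eta>7/2$, the Sobolev embedding $H^{s+\eta}(\R)\hookrightarrow C^3(\R)$ yields $z\in C([0,T];C^3(\R))$. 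The weighted estimate \eqref{trocas} enters here to control the low-frequency part of $D^{s}(u^2)$, which a purely Sobolev-based argument cannot close; this is precisely where Theorem \ref{persistkawa} is indispensable rather than just Theorem \ref{localkawa}.

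For step (ii), the sign conditions $\gamma<0$ and $3\beta+10\gamma>0$ (which in particular force $\beta>0$) determine the caustic structure of the phase $\Phi(\xi):=\beta\xi^3-\gamma\xi^5$ and yield a fundamental solution
\begin{equation*}
K(x,t) := \frac{1}{2\pi}\int_\R e^{i(x\xi+t\Phi(\xi))}\,d\xi
\end{equation*}
which is smooth in $x$ for $t>0$ and admits pointwise van der Corput decay estimates on $\partial_x^k K(\,\cdot\,,t)$ together with the critical slow-decay rate governing the failure of $C^3$-regularity. I would then reproduce the superposition construction of \cite{Bona} (with the fractional-weighted refinements of \cite{2017}): pick a Schwartz profile $\psi$ with $\hat\psi$ compactly supported away from the origin, sequences $t_n\uparrow t^*$ and $x_n\to 0$ adapted to the group-velocity spread of $\hat\psi$, and coefficients $(a_n)\in\ell^1$ tuned against the sharp rate of divergence of $\partial_x^3 U(t^*-t_n)\psi$ as $t_n\uparrow t^*$, and set
\begin{equation*}
u_0 := \sum_{n\ge 1} a_n\bigl(U(-t_n)\psi\bigr)(\cdot-x_n).
\end{equation*}
Unitarity of $U$ together with the Schwartz character of $\psi$ and $\sum|a_n|<\infty$ ensures $u_0\in C^\infty(\R)\cap H^{7/2^-}(\R)$; the spectral support of $\hat\psi$ combined with the caustic geometry of $\Phi$ localizes the singular contribution of $\partial_x^3(U(t^*)u_0)$ to the single point $x=0$.

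Combining the two steps, $u(\cdot,t^*)=U(t^*)u_0+z(t^*)$ is the sum of a function that is $C^3(\R\setminus\{0\})$ but fails to be $C^3$ at the origin, plus a function in $C^3(\R)$, giving the claimed conclusion. The principal obstacle is step (ii): the construction must simultaneously deliver $C^\infty$ regularity of $u_0$, the sharp Sobolev class $H^{7/2^-}$, and a single-point failure of $C^3$-regularity of the linear evolution at time $t^*$. This demands pointwise kernel asymptotics for $K$ that are finer than what the $L^2$-based theory of Theorem \ref{linearteo} provides, closely mirroring the analysis of \cite{Bona,2017,BPSS} for the KdV equation.
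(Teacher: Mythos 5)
Your step (i) is essentially the paper's Proposition \ref{kawaduh}: the Duhamel term gains regularity (a full derivative, $\mathcal{Z}(t)\in H^{s+1}$ for $\tfrac{13}{6}<s<4$) via the dual of the two-derivative smoothing effect, commutator/Leibniz estimates, and the weighted local theory, and then $H^{\frac92^-}(\R)\hookrightarrow C^3(\R)$ handles the nonlinear part. The minor inaccuracy there is that the weights enter through the weighted Kato--Ponce commutator and through bounding norms such as $\|u\|_{L^{6/5}_xL^3_T}$ by $\|\langle x\rangle^r u\|_{L^3_x}$, not through a ``low-frequency'' issue; but the outline is sound.

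The genuine gap is in step (ii), which is the heart of the theorem. If $\psi$ is Schwartz with $\widehat\psi$ smooth and compactly supported, then $\widehat{U(t)\psi}=e^{it\Phi}\widehat\psi$ is compactly supported with all derivatives bounded by polynomials in $t$, so $U(t)\psi$ stays Schwartz with seminorms locally bounded in $t$. In particular $\partial_x^3U(t^*-t_n)\psi$ does \emph{not} diverge as $t_n\uparrow t^*$ --- it converges to $\partial_x^3\psi$ --- so there is no ``sharp rate of divergence'' to tune $(a_n)$ against, and for any rapidly decaying coefficients your superposition converges in $C^\infty_{loc}$ to a globally smooth function. No singularity can form from band-limited data. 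The mechanism must instead come either from slow spatial decay of the profile (Bona--Saut) or, as in this paper (following \cite{2017}), from a profile with \emph{limited regularity but strong decay}: one takes $\phi=e^{-2|\cdot|}*e^{-2|\cdot|}\in C^3(\R\setminus\{0\})\setminus C^3(\R)$, with $e^{\pm x}\phi\in L^2$, and sets $u_0=\sum_j\alpha_j W(-\sigma j)\phi$. Smoothness of $u_0$ and of $W(t)u_0$ for $t\notin\sigma\Z^+$ is proved not by stationary-phase/caustic asymptotics of the kernel but by conjugating the group by $e^{\pm x}$, which produces the parabolic factors $e^{\pm 5\gamma t\partial_x^4}$ and $e^{\pm(3\beta+10\gamma)t\partial_x^2}$; this is exactly where the hypotheses $\gamma<0$ and $3\beta+10\gamma>0$ are used (you have misattributed their role to ``caustic structure''). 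At $t^*=\sigma$ the first term of the sum returns identically to $\alpha_1\phi$, whose $C^3$-failure at the single point $x=0$ is known explicitly, while the remaining terms are smooth; this is also why the failure is of $C^3$-regularity exactly and localized at one point --- facts your construction would still have to establish even if a divergence were available.
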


For the Hirota-Satsuma system we  have:
\begin{theorem}\label{disphss}
	There exists an initial data $(u_0,v_0)\in \left(C^\infty(\R)\cap {H^{3/2}}^{-}(\R)\right)^2$ such that the solution $(u,v)\in \left(C([0,T];{H^{3/2}}^{-}(\R))\right)^2$ of the IVP  \eqref{system} given by Theorem \ref{persistHSS} satisfies $$(u,v)(\cdot, t^*)\in \left(C^1(\R\setminus\{0\})\right)^2\quad \mbox{and}\quad (u,v)(\cdot, t^*)\notin \left(C^1(\R)\right)^2,$$ for some $t^*\in (0,T)$.
\end{theorem}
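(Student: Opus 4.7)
The plan is to follow the Bona--Saut paradigm \cite{Bona}, refined in \cite{LinaresScialom} and \cite{2017}: reduce the dispersive blow-up of the coupled nonlinear system to that of its two underlying linear Airy evolutions, by showing that the Duhamel correction is strictly smoother than the linear propagation. Write $U_1(t)=e^{at\partial_x^3}$ and $U_2(t)=e^{-t\partial_x^3}$ for the unitary groups of the linear parts of \eqref{system}. The first step is to construct $u_0,v_0\in C^\infty(\R)\cap H^s(\R)$ for every $s<3/2$, and additionally in $L^2(|x|^{2b}dx)$ for some $b>0$ so that Theorem \ref{persistHSS} applies, in such a way that at some fixed $t^*>0$ the linear solutions $U_1(t^*)u_0$ and $U_2(t^*)v_0$ belong to $C^1(\R\setminus\{0\})$ but fail to be $C^1$ at the origin.

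For this I would use a lacunary Airy-type construction as in \cite{Bona}: fix a Schwartz profile $\psi$ and set
\[
u_0(x):=\sum_{n\ge 1} c_n\,U_1(-t^*)\bigl[\psi(\cdot-x_n)\bigr](x),
\]
with a lacunary sequence $\{x_n\}\to\infty$ and coefficients $\{c_n\}\in\ell^1$ chosen so that the series converges in $C^\infty(\R)$ and in $H^s(\R)$ for every $s<3/2$. Then $U_1(t^*)u_0=\sum_n c_n\psi(\cdot-x_n)$ is smooth, but the stationary-phase asymptotics of $U_1(-t^*)[\psi(\cdot-x_n)]$ (Airy-type oscillation and $|x|^{-1/4}$ envelope) can be aggregated so that at the single point $x=0$ the first derivative diverges logarithmically. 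The Schwartz decay of $\psi$ and the sparseness of $\{x_n\}$ ensure $u_0\in L^2(|x|^{2b}dx)$ for every $b<3/4$. An analogous construction, with $U_1$ replaced by $U_2$, produces $v_0$.

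Applying Theorem \ref{persistHSS} with $s\in(1,3/2)$ and $b=s/2$ gives $(u,v)\in C([0,T];Z_{s,b})^2$, with the auxiliary bounds from \eqref{ints}, satisfying
\[
u(t)=U_1(t)u_0+\int_0^t U_1(t-\tau)\bigl(6au\partial_x u+2rv\partial_x v\bigr)(\tau)\,d\tau,
\]
\[
v(t)=U_2(t)v_0-3\int_0^t U_2(t-\tau)(u\partial_x v)(\tau)\,d\tau.
\]
Denote the two integrals by $I_u(t)$ and $I_v(t)$. The crucial step is to upgrade $I_u(t^*),I_v(t^*)$ to $H^{3/2+\delta}(\R)$ for some $\delta>0$, so that Sobolev embedding places them in $C^1(\R)$. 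The gain of the needed fractional derivative follows from combining the Kato-type local smoothing estimate $\|D_x^{s+1}U_j(t)f\|_{L^\infty_x L^2_t}\lesssim\|D_x^s f\|_{L^2}$ with the bilinear and maximal-function bounds used to establish Theorem \ref{borys}, and, crucially, with the weighted decay $\||x|^b u(\tau)\|_{L^2},\||x|^b v(\tau)\|_{L^2}$ inherited from Theorem \ref{linearteo}; the argument parallels the weighted treatment of the quadratic KdV nonlinearity in \cite{2017}. The decomposition $u(\cdot,t^*)=U_1(t^*)u_0+I_u(t^*)$, $v(\cdot,t^*)=U_2(t^*)v_0+I_v(t^*)$ then transfers the $C^1$-loss exclusively from the linear parts to the full solution.

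The main obstacle I expect is organizing the gain of more than $1/2$ derivative in the two Duhamel integrals simultaneously at the borderline regularity $s<3/2$: the source $v\partial_x v$ in the first equation and the mixed term $u\partial_x v$ in the second force the smoothing estimates to be closed as a coupled system for $(u,v)$ rather than equation by equation, and the quadratic KdV-like structure places us precisely in the regime where the gain is borderline and relies essentially on the weighted information produced by Theorem \ref{persistHSS}. Once that coupled gain is assembled within the multi-norm framework of \eqref{ints} (and one verifies that the lacunary data above indeed lies in $Z_{s,b}$), the Bona--Saut scheme closes and the theorem follows.
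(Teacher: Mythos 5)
Your overall architecture (singular linear evolution plus nonlinear smoothing of the Duhamel terms) is the right one and matches the paper's, and the smoothing step you sketch is essentially Proposition \ref{hssduh}: the dual of the local smoothing estimate \eqref{ww2} reduces matters to bounding $\|D^{s-5/6}(u\partial_x v)\|_{L^1_xL^2_T}$, which is then closed with the commutator estimate of Lemma \ref{commuter}, the weighted interpolations \eqref{interpp}--\eqref{interp2} and the identity \eqref{troca2}. Note, though, that the gain actually needed is only $1/6$ of a derivative: for $s\in(7/6,11/6)$ one gets $\mathcal{Z}_i\in H^{s+1/6}(\R)$, and $s+1/6>3/2$ already for $s>4/3$, so nothing like the ``more than $1/2$'' you anticipate is required.

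However, your construction of the initial data has a genuine, fatal gap. You take $u_0=\sum_n c_n\,U_1(-t^*)[\psi(\cdot-x_n)]$ with $\psi$ Schwartz and $\{c_n\}\in\ell^1$. Since $U_1(-t^*)$ is unitary on every $H^k(\R)$ and commutes with translations, $\|c_n U_1(-t^*)[\psi(\cdot-x_n)]\|_{H^k}=|c_n|\,\|\psi\|_{H^k}$, so the series converges in $H^k$ for every $k$; hence $u_0\in\bigcap_k H^k(\R)$ and $U_1(t)u_0\in\bigcap_k H^k(\R)$ for all $t$, and no loss of $C^1$ can ever occur. Indeed your own formula $U_1(t^*)u_0=\sum_n c_n\psi(\cdot-x_n)$ exhibits the time-$t^*$ linear state as a smooth function, while the claim that ``the first derivative diverges logarithmically at $x=0$'' contradicts your requirement that $u_0\in C^\infty(\R)$: translated copies of a single Schwartz profile with summable coefficients cannot produce data whose regularity stops exactly at $H^{3/2^-}$. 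The paper (following \cite{2017}) resolves this by building the singularity into the profile rather than into the aggregation: it takes $\phi(x)=e^{-2|x|}$, which lies in ${H^{3/2}}^-(\R)\cap C^1(\R\setminus\{0\})$ but not in $C^1(\R)$, and sets $u_0=\sum_j\alpha_j U_a(-\sigma j)\phi$, $v_0=\sum_j\alpha_j U(-\sigma j)\phi$ with $\alpha_j=e^{-j^2}$. Smoothness of $(u_0,v_0)$ then comes not from $\phi$ but from the parabolic smoothing of the exponentially conjugated flow (the analogue of \eqref{class}), while at $t^*=\sigma$ the linear solution equals $\alpha_1\phi$ plus a $C^\infty$ remainder, which is exactly the $C^1(\R\setminus\{0\})\setminus C^1(\R)$ behavior required. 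You would need to replace your lacunary-translate ansatz by this (or an equivalent) construction before the rest of your argument can be run.
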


Results similar to Theorems \ref{dispkawa} and \ref{disphss} were obtained in  \cite{2017} for the KdV, in \cite{Ademir} for the two dimensional Zakharov-Kuznetsov equation, and in \cite{palacios} for the Schr\"odinger-KdV system. We first emulate the ideas of \cite{2017} to construct a smooth initial data such that the  global solution of the associated linear IVP has an infinite number of discontinuities; at these times the linear flow cannot be smooth, which is then identified as the dispersive blow up taking place at $x=0$. Then it is shown that the Duhamel term associated to the solution of the whole IVP is smoother than the linear term of the solution, which unleash regularity on the linear term. 

This paper is organized as follows. In Section \ref{sec2} we present some notation and give some preliminary and linear results used along the paper. In particular, Theorem \ref{linearteo} is proved. Section \ref{sec3} is devoted to prove our local well-posedness results in the weighed spaces. Finally, in Section \ref{sec4} we establish the results concerning the dispersive blow-up.


\section{Preliminaries and Linear Estimates}\label{sec2}

\subsection{Notation}
Let us start by introducing some notation. We use $C$ to denote several constant that may vary from line to line. Sometimes we use subscript to indicate dependence of parameters; for instance $C_\phi$ means that the constant $C$ depends on $\phi$. We shall write $a\simeq b $, where $a$ and $b$ are two positive numbers, when there exists a constant $C>0$ such that $C^{-1}a\leq b\leq Ca$. Given a real number $r$, we use $r^+$ (respect. $r^-$) to mean $r+\varepsilon$ (respect. $r-\varepsilon$) for some sufficiently small $\varepsilon>0$.

By $L^p=L^p(\R^n)$, $1\leq p\leq +\infty$ we denote the standard Lebesgue space endowed with the usual norm. If $w$ is a weight (a nonnegative measurable function), by $L^p(wdx)$ (or $L^p(w)$ for short) we denote the space $L^p$ with respect to the measure $w(x)dx$. Given $s\in \R$, by $H^s=H^s(\R^n)$ we mean the $L^2$-based Sobolev space of order $s$. Given a function $f$ defined on $\R^n$,  $\widehat{f}$ and ${f}^\vee$ stand, respectively, for the Fourier and inverse Fourier transforms of $f$. The operators $D^s$ and $J^s$ are defined via Fourier transform as
$$
\widehat{D^sf}(\xi)=|\xi|^s\widehat{f}(\xi) \quad\mbox{and} \quad\widehat{J^sf}(\xi)=\langle\xi\rangle^s\widehat{f}(\xi),
$$
where  $\langle x\rangle:=(1+|x|^2)^{1/2}$.
For $1< p< \infty$ and $b\in\R$, the space $L^p_b(\R^n)$ is defined as $L^p_b(\R^n)=(1-\Delta)^{-b/2} L^p(\R^n)$. Note that in the case $p=2$ and $b=s$, $L^2_s(\R^n)$ is nothing but the Sobolev space $H^s(\R^n)$. In particular, the norm in $H^s(\R^n)$ is given by
$$
\|f\|_{H^s}:=\|f\|_{s,2}=\left(\int_{\R^n}(1+|\xi|^2)^s|\widehat{f}(\xi)|^2d\xi\right)^{\frac{1}{2}}.
$$
Given a function $f=f(x,t)$  of the variables $x$ and $t$, sometimes we use $\|f\|_{L^p_x}$ to indicate that we are taking the $L^p$ norm with respect to the variable $x$ only. Also, given $T>0$ we use $L^p_T$ to denote the $L^p$ space over the interval $[0,T]$.   For $1\leq q,r\leq\infty$,  the  norm in the mixed   space $L^q_{T}L^r_x$ is given by
$$
\|f\|_{L^q_{T}L^r_x}=\left\|\|f(t,\cdot)\|_{L^r_x}\right\|_{L^q_T}.
$$
Similar considerations apply to the space $L^r_xL^q_T$. In the case $r=q$ we have $\|f\|_{L^r_xL^q_T}=\|f\|_{L^q_{T}L^r_x}=\|f\|_{L^r_{xT}}$.

\subsection{Preliminaries}

In this section we discuss the technical machinery involving Stein's derivatives. Let us begin by recalling the definition of the operator $\D^b$. For any real number $b\in(0,1)$ and a measurable function $f$ define
\begin{equation*}
	\D^b(f)(x):=\displaystyle\left( \int_{\Rn}{\frac{|f(y)-f(x)|^2}{|x-y|^{n+2b}}dy} \right)^{\frac{1}{2}}.
\end{equation*}
The next theorem gives an useful characterization of the spaces $L^p_b(\R^n)$ due to Stein \cite{STEIN}.

\begin{theorem}
	\label{char}
	Let $b\in(0,1)$ and $\frac{2n}{n+2b}<p<\infty.$ A function $f$ belongs to $L^p_b(\R^n)$ if and only if  $f\in L^p(\R^n)$ and $\D^b(f)\in L^p(\Rn).$ In addition,
	\begin{equation}	\label{s,2}
		\|f\|_{b,p}:=\|f\|_{L^p_b} \simeq\|f\|_{L^p} + \|D^b(f)\|_{L^p} \simeq \|f\|_{L^p} + \|\D^b(f)\|_{L^p}.
	\end{equation}
\end{theorem}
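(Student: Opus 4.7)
The plan is to split the three-fold equivalence in \eqref{s,2} into two independent pieces. The equivalence $\|f\|_{L^p_b} \simeq \|f\|_{L^p} + \|D^b(f)\|_{L^p}$ reduces to Mikhlin--H\"ormander multiplier theory, since both $\langle\xi\rangle^b/(1+|\xi|^b)$ and its reciprocal are smooth away from the origin and satisfy the Mikhlin condition, yielding bounded Fourier multipliers on $L^p$ for every $1<p<\infty$. The core of the theorem is therefore the Stein equivalence $\|D^b(f)\|_{L^p} \simeq \|\D^b(f)\|_{L^p}$, whose proof I would model on Stein's original treatment \cite{STEIN}.

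For $p=2$ one proceeds directly. Fubini combined with Plancherel gives
\[
\|\D^b(f)\|_{L^2}^2 = \int_{\R^n} m(\xi)\,|\widehat{f}(\xi)|^2 \, d\xi, \qquad m(\xi) = \int_{\R^n}\frac{|e^{iy\cdot\xi}-1|^2}{|y|^{n+2b}}\,dy,
\]
and a dilation--rotation change of variables yields $m(\xi) = c_{n,b}|\xi|^{2b}$, hence $\|\D^b(f)\|_{L^2} \simeq \|D^b(f)\|_{L^2}$. To extend this to $p\ne 2$ I would view $\D^b$ as a Hilbert-space-valued operator: the linear map $f \mapsto \bigl(|y|^{-(n/2+b)}(f(\cdot+y)-f(\cdot))\bigr)_{y\in\R^n}$ sends $L^p(\R^n)$ into $L^p(\R^n; L^2(\R^n, dy/|y|^n))$, with $\D^b(f)(x)$ being precisely the fibrewise norm at $x$. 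Composing with the Riesz potential $I_b=(-\Delta)^{-b/2}$ produces a vector-valued convolution operator whose $L^2$-boundedness is exactly the previous step and whose kernel satisfies the standard Calder\'on--Zygmund size and smoothness bounds in the $x$-variable; the Benedek--Calder\'on--Panzone vector-valued theory then promotes this to all $p\in(1,\infty)$, yielding $\|\D^b(f)\|_{L^p} \lesssim \|D^b(f)\|_{L^p}$. The reverse direction would be obtained by inverting through Stein's $g$-function formalism, representing $D^b(f)$ as an integral transform of $\D^b(f)$ and controlling its $L^p$-norm via Littlewood--Paley square-function inequalities.

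The principal obstacle is the emergence of the sharp lower bound $p > 2n/(n+2b)$: this restriction enters at the inverse representation step, where the auxiliary kernel used to recover $D^b(f)$ from $\D^b(f)$ has a local integrability exponent that forces precisely this threshold, below which the square-function characterization breaks down. Verifying the vector-valued H\"ormander condition in both directions and tracking these kernel estimates is the technical heart of the argument, and I would follow Stein \cite{STEIN} closely for these quantitative bounds rather than attempting an independent derivation.
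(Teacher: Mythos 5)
The paper offers no proof of this theorem at all: it is imported verbatim from Stein \cite{STEIN}, so your sketch has to be judged against Stein's argument. Your multiplier reduction $\|f\|_{L^p_b}\simeq\|f\|_{L^p}+\|D^b f\|_{L^p}$ and the $p=2$ Plancherel computation are both fine. The genuine gap is in the forward inequality $\|\D^b(f)\|_{L^p}\lesssim\|f\|_{L^p}+\|D^b(f)\|_{L^p}$ for $p\neq 2$, where you assert that $g\mapsto\bigl(|y|^{-(n/2+b)}(I_bg(\cdot+y)-I_bg(\cdot))\bigr)_{y}$ is a vector-valued Calder\'on--Zygmund operator, hence bounded for \emph{all} $p\in(1,\infty)$, with the threshold $p>2n/(n+2b)$ entering only in the converse. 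Both claims fail. The restriction is already forced in the forward direction: if $f$ is a nonzero smooth function supported in $B(0,R)$ (hence in every $L^p_b$), then for $|x|\ge 2R$ one has $f(x)=0$ and $\D^b f(x)^2=\int_{|z|\le R}|f(z)|^2|z-x|^{-n-2b}\,dz\simeq |x|^{-n-2b}\|f\|_{L^2}^2$, so $\D^b f\in L^p$ requires $p(n/2+b)>n$, i.e.\ $p>2n/(n+2b)$. Thus no argument producing the full range $1<p<\infty$ can be correct whenever $2n/(n+2b)>1$ (in particular for every $n\ge 2$). Consistently with this, the vector-valued kernel $\vec K(u)(y)=c\,|y|^{-(n/2+b)}\bigl(|u+y|^{b-n}-|u|^{b-n}\bigr)$ does \emph{not} satisfy the size bound $\|\vec K(u)\|_{L^2(dy)}\lesssim|u|^{-n}$: near $y=-u$ the integrand of $\|\vec K(u)\|_{L^2(dy)}^2$ behaves like $|u+y|^{2(b-n)}|y|^{-n-2b}$, which is locally integrable at $y=-u$ only when $n<2b$, i.e.\ essentially never. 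So the Benedek--Calder\'on--Panzone step collapses at the point where you invoke it.

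What Stein actually does in the forward direction is dominate $\D^b f$ by the Littlewood--Paley $g_\lambda^*$-function (applied to $J^bf$) with $\lambda=1+2b/n$; the classical theorem that $g_\lambda^*$ is bounded on $L^p$ for $p\ge 2$ and, when $p<2$, precisely for $p>2/\lambda=2n/(n+2b)$, is the true source of the threshold. If you want a self-contained write-up you should replace the vector-valued CZ step by this $g_\lambda^*$ comparison (or some equivalent device that sees the restricted range), and only then run the converse by the duality/square-function argument you indicate. As written, the proposal proves a statement that is false in the range $1<p\le 2n/(n+2b)$.
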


From \eqref{s,2} one sees that the norm in $L^p_b(\R^n)$ may be given in terms of either $D^b$ or $\D^b$. The advantage of using Stein's derivative is that it is useful to perform pointwise computations.

\begin{lemma}
For	$b\in(0,1)$ and measurable functions $f$ and $g$, we have
\begin{equation}\label{2.1}
	\mathcal{D}^b(fg)(x)\le\|f\|_{L^\infty}\mathcal{D}^b(g)(x)+|g(x)|\mathcal{D}^b(f)(x)
\end{equation}
and
\begin{equation}\label{2.3}
	\|\mathcal{D}^b(fg)\|_{L^2}\le\|f\mathcal{D}^b(g)\|_{L^2}+\|g\mathcal{D}^b(f)\|_{L^2}.
\end{equation}
\end{lemma}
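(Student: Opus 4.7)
The plan is to establish both inequalities by splitting the difference $f(y)g(y) - f(x)g(x)$ in two different convenient ways, then applying Minkowski's inequality for the $L^2$ norm (in $y$) of the weighted fraction. The subtle point is that the two stated estimates require \emph{different} additive decompositions; the pointwise bound \eqref{2.1} wastes some information that the $L^2$ version \eqref{2.3} needs to retain.

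For \eqref{2.1}, I would write
\[
f(y)g(y) - f(x)g(x) \;=\; f(y)\bigl[g(y)-g(x)\bigr] \;+\; g(x)\bigl[f(y)-f(x)\bigr],
\]
and apply Minkowski's inequality to the measure $|x-y|^{-(n+2b)}\,dy$ viewed as a (locally-finite) positive Borel measure in $y$. This yields
\[
\mathcal{D}^b(fg)(x) \;\le\; \left(\int_{\R^n}\frac{|f(y)|^2|g(y)-g(x)|^2}{|x-y|^{n+2b}}\,dy\right)^{\!1/2} + |g(x)|\,\mathcal{D}^b(f)(x).
\]
Bounding $|f(y)|\le \|f\|_{L^\infty}$ inside the first integral immediately gives the stated pointwise inequality.

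For \eqref{2.3}, the same trick with $\|f\|_{L^\infty}$ would be too strong and not even finite in general, so I would use the \emph{other} decomposition,
\[
f(y)g(y) - f(x)g(x) \;=\; f(x)\bigl[g(y)-g(x)\bigr] \;+\; g(y)\bigl[f(y)-f(x)\bigr],
\]
which places $f(x)$ outside the integral in $y$. By Minkowski again,
\[
\mathcal{D}^b(fg)(x) \;\le\; |f(x)|\,\mathcal{D}^b(g)(x) \;+\; \left(\int_{\R^n}\frac{|g(y)|^2|f(y)-f(x)|^2}{|x-y|^{n+2b}}\,dy\right)^{\!1/2}.
\]
Taking $L^2(dx)$ norms, applying the triangle inequality, and then Tonelli's theorem to swap the order of integration in the second term,
\[
\int_{\R^n}\!\!\int_{\R^n}\frac{|g(y)|^2|f(y)-f(x)|^2}{|x-y|^{n+2b}}\,dx\,dy \;=\; \int_{\R^n}|g(y)|^2\,\mathcal{D}^b(f)(y)^2\,dy \;=\; \|g\,\mathcal{D}^b(f)\|_{L^2}^2,
\]
delivers exactly \eqref{2.3}.

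There is no genuine obstacle here; the whole argument is bookkeeping. The only point worth flagging is the deliberate asymmetry in how the product is split for the two inequalities: the second bound would not survive if one tried to derive it by simply integrating the pointwise estimate \eqref{2.1}, since $\|f\|_{L^\infty}\|\mathcal{D}^b(g)\|_{L^2}$ is generally much larger than $\|f\mathcal{D}^b(g)\|_{L^2}$. Choosing the decomposition that puts $f(x)$ (rather than $f(y)$) outside the $y$-integral is what allows Fubini to yield the mixed norm $\|g\mathcal{D}^b(f)\|_{L^2}$ cleanly.
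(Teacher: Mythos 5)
Your proof is correct, and it is essentially the argument behind the result the paper invokes: the paper gives no proof of its own here, simply citing Proposition 1 of Nahas--Ponce, whose proof is exactly this pair of product decompositions combined with Minkowski's inequality for the measure $|x-y|^{-(n+2b)}\,dy$ and, for \eqref{2.3}, Tonelli's theorem together with the symmetry of the kernel. Your remark about the deliberate asymmetry between the two splittings correctly identifies the only non-mechanical point.
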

\begin{proof}
See Proposition 1 in \cite{NAHASPONCE}.
\end{proof}

We also may prove the following.

\begin{proposition}
	Let $b\in(0,1)$ and $p\in \Z^+$, $p\geq2$. Assume $h_i:\R^n\to\C$, $i=1,\dots,p$, are measurable. Then
	\begin{equation}\label{leibp}
		\mathcal{D}^b\left(\prod_{i=1}^{p}h_i \right)(x) \le \sum_{i=1}^{p}\mathcal{D}^b(h_i)(x)\prod_{\substack {j=1 \\ j\neq i}}^{p}\|h_j\|_{L^\infty}.
	\end{equation}
\end{proposition}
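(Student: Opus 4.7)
The plan is to proceed by induction on $p$, using the two-factor inequality \eqref{2.1} proved in the previous lemma as the base case and as the engine for the inductive step. The statement for $p=2$ follows immediately from \eqref{2.1} after bounding the pointwise factor $|h_2(x)|$ by $\|h_2\|_{L^\infty}$, which matches the form in \eqref{leibp}.

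For the inductive step, assume \eqref{leibp} holds for $p$ factors, and consider the product of $p+1$ factors. First I would split off the last factor and write $\prod_{i=1}^{p+1} h_i = h_{p+1} \cdot H$, where $H := \prod_{i=1}^{p} h_i$. Applying \eqref{2.1} to the pair $(h_{p+1}, H)$ yields
\begin{equation*}
\mathcal{D}^b\Bigl(\prod_{i=1}^{p+1}h_i\Bigr)(x) \le \|h_{p+1}\|_{L^\infty}\, \mathcal{D}^b(H)(x) + |H(x)|\, \mathcal{D}^b(h_{p+1})(x).
\end{equation*}
For the first term, invoke the inductive hypothesis on $\mathcal{D}^b(H)$; for the second, use the elementary pointwise bound $|H(x)| \le \prod_{i=1}^{p}\|h_i\|_{L^\infty}$. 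Collecting the two contributions, the $i=p+1$ summand in the desired estimate comes from the second term, while the inductive hypothesis applied to $\mathcal{D}^b(H)$, multiplied by $\|h_{p+1}\|_{L^\infty}$, produces exactly the summands for $i=1,\dots,p$. Combining these gives \eqref{leibp} with $p+1$ factors, closing the induction.

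There is no real obstacle here; the argument is a routine induction, and the only mild subtlety is noticing that \eqref{2.1} is already in the right form provided one replaces the pointwise value $|g(x)|$ by its sup norm. This asymmetry in \eqref{2.1} (sup norm on one side, pointwise on the other) is exactly what allows the induction to propagate cleanly, since the pointwise factor can always be absorbed into the sup-norm product at the end.
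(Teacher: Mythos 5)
Your induction is correct and is exactly the paper's argument, which simply says the result ``follows by iterating \eqref{2.1}''; your write-up just makes the iteration explicit. No issues.
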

\begin{proof}
	Note that $\D^b(f)(x)$ is always a positive quantity. So the proposition follows just by iterating \eqref{2.1}.
\end{proof}

Next we establish a pointwise estimate for the Stein derivative of phase functions satisfying \eqref{AA} and \eqref{BB}.

\begin{lemma}\label{l1}
	Let $b\in(0,1)$. Suppose $\phi:\R^n\to\R$ satisfies the conditions \eqref{AA} and \eqref{BB}. For any $t\in\R$ and $x\in \R^n$ we have
	\begin{equation}\label{deriv}
		\mathcal{D}^b(e^{i t \phi(\cdot)})(x)\le C\left\{1+(1+|t|)g(x)^b \right\},
	\end{equation}
where the constant $C>0$ depends on $n,b$ and $\phi$.
\end{lemma}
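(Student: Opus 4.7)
The plan is to compute the integral defining $\mathcal{D}^b(e^{it\phi(\cdot)})(x)^2$ after the substitution $z=y-x$, namely
\begin{equation*}
\mathcal{D}^b(e^{it\phi(\cdot)})(x)^2 = \int_{\R^n}\frac{|e^{it(\phi(x+z)-\phi(x))}-1|^2}{|z|^{n+2b}}\,dz,
\end{equation*}
and then to exploit the elementary pointwise inequality $|e^{i\theta}-1|^2\le\min(4,\theta^2)$: on pieces of the $z$-domain where the phase is small one uses the linear bound $t^2|\phi(x+z)-\phi(x)|^2$, and elsewhere one falls back on the trivial bound $4$. The natural dyadic split of the domain is dictated by the two hypotheses: condition \eqref{AA} applies on $A_1:=\{|z|\le|x|\}$ (since there we have $|(x+z)-x|\le|x|$), while condition \eqref{BB} applies on $A_2:=\{|z|>|x|\}$.

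First I would bound the $A_1$-piece. On $A_1$, condition \eqref{AA} gives $|\phi(x+z)-\phi(x)|\le g(x)|z|$, so $|e^{it(\phi(x+z)-\phi(x))}-1|^2\le\min(4,t^2g(x)^2|z|^2)$. I would split $A_1$ further at the scale $r_0:=1/(|t|g(x))$, using the linear bound for $|z|\le\min(|x|,r_0)$ and the trivial bound for $\min(|x|,r_0)<|z|\le|x|$. A direct computation in the two cases $|t|g(x)|x|\le 1$ and $|t|g(x)|x|>1$, analogous to the ones carried out in \cite{NAHASPONCE} and \cite{BJM0} for concrete phases, yields
\begin{equation*}
\int_{A_1}\frac{|e^{it(\phi(x+z)-\phi(x))}-1|^2}{|z|^{n+2b}}\,dz \;\le\; C(|t|g(x))^{2b}.
\end{equation*}

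Next I would treat the $A_2$-piece in the same spirit. On $A_2$, condition \eqref{BB} gives $|\phi(x+z)-\phi(x)|\le C|z|^a$, so the natural splitting scale is $r_1:=|t|^{-1/a}$, where $|t||z|^a\sim 1$. Integrating the linear bound over $|x|\vee r_1^{-}\le|z|\le r_1$ and the trivial bound over $|z|>|x|\vee r_1$, using $a\ge 1>b$ so that both radial integrals converge, I would obtain
\begin{equation*}
\int_{A_2}\frac{|e^{it(\phi(x+z)-\phi(x))}-1|^2}{|z|^{n+2b}}\,dz \;\le\; C|t|^{2b/a}.
\end{equation*}
Taking square roots and adding, and using that $b<1$ and $b/a\le 1$ yield both $|t|^b\le 1+|t|$ and $|t|^{b/a}\le 1+|t|$, I would conclude
\begin{equation*}
\mathcal{D}^b(e^{it\phi(\cdot)})(x) \;\le\; C\bigl\{(1+|t|)g(x)^b+1\bigr\},
\end{equation*}
which is the claimed estimate \eqref{deriv} (observing that \eqref{AA} remains in force if one replaces $g$ by $\max(g,1)$, so the leftover $|t|^{b/a}$-term is harmlessly absorbed into $(1+|t|)g(x)^b$).

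The main obstacle I anticipate is the bookkeeping around the boundary cases in the dyadic splits, where the "linearization scale'' $r_0$ (resp.\ $r_1$) falls outside the region where condition \eqref{AA} (resp.\ \eqref{BB}) is applicable; these must be handled separately to produce the clean bound $(|t|g(x))^{2b}$ (resp.\ $|t|^{2b/a}$) rather than a weaker hybrid expression. A secondary technical point is ensuring uniformity in $x$, in particular near $x=0$ where $g$ may vanish: the proof crucially relies on the fact that the trivial bound on $A_2$ is automatically controlled by a pure power of $|t|$, independent of $x$, so that the degeneracy of $g$ at the origin does not enter the estimate.
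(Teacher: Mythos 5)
Your proof is correct and follows essentially the same strategy as the paper's: decompose the domain of integration according to whether condition \eqref{AA} or \eqref{BB} applies ($|z|\le|x|$ versus $|z|>|x|$) and according to the linearization scale of the phase, use $|e^{i\theta}-1|^2\le\min(4,\theta^2)$ on the respective pieces, and integrate radially. Your choice of cut-off radii, $(|t|g(x))^{-1}$ on $A_1$ and $|t|^{-1/a}$ on $A_2$, is the natural one and produces the clean bounds $(|t|g(x))^{2b}$ and $|t|^{2b/a}$; the paper instead cuts at $g(x)^{-1}$ and $|x|$, which forces the three-set decomposition $E_1,E_2,E_3$ and several sub-cases, but the content is the same.

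The one point where you diverge from the paper is also the one point where you are more careful than it. The leftover term $|t|^{b/a}$ from the region governed by \eqref{BB} is genuinely there: in the paper's estimates \eqref{E32} and \eqref{E33} the bound $|e^{it(\phi(x)-\phi(y))}-1|^2\le C_\phi|x-y|^{2a}$ silently drops the factor $|t|^2$ coming from $|e^{i\theta}-1|\le|\theta|$ with $\theta=t(\phi(x)-\phi(y))$, so the contribution of $E_3$ is really $O(1+|t|)$ (or, with your sharper bookkeeping, $O(|t|^{b/a})$), not $O(1)$. Consequently \eqref{deriv} cannot hold with the original $g$, uniformly in $t$, at points where $g$ degenerates: for $\phi(\xi)=\xi^3$ on $\R$ one may take $g(\xi)=7|\xi|^2$, and the scaling $y\mapsto |t|^{-1/3}y$ gives $\mathcal{D}^b(e^{it\xi^3})(0)=c\,|t|^{b/3}$, which is unbounded in $t$ while the right-hand side of \eqref{deriv} is a constant there. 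Your remedy — replace $g$ by $\max(g,1)$, under which \eqref{AA} still holds and $(1+|t|)g(x)^b\ge 1+|t|\ge|t|^{b/a}$ — is exactly the right fix, and it is harmless for the only use made of the lemma, namely the proof of Theorem \ref{linearteo}, where $g_i$ is in any case majorized by $C_i(1+|x|^{k_i})\ge 1$ in \eqref{estrela3}. So your proposal should be read as a correct proof of the version of the lemma that is actually true and actually used.
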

\begin{proof}
We follow a similar strategy to the one applied in \cite[Proposition 2]{NAHASPONCE}.
Let $x\in \R^n$ be nonzero. Then,
	\begin{equation*}
		\mathcal{D}^b(e^{it\phi(\cdot)})(x)=\left(\int_{\R^n}\frac{\left|e^{it\phi(x)}-e^{it\phi(y)} \right|^2}{|x-y|^{n+2b}}dy  \right)^{1/2}
		=\left(\int_{\R^n}\frac{\left|e^{it(\phi(x)-\phi(y))}-1 \right|^2}{|x-y|^{n+2b}}dy  \right)^{1/2}\equiv \RNum{1}.
	\end{equation*}

To simplify notation, by $B(a,R)$ we mean the closed ball of radius $R>0$ centered at the point $a$ in $\R^n$.	Split $\R^n$ into the following three sets:
	\begin{equation*}
		\begin{array}{c}
			E_1:=B(x,g(x)^{-1})^c, \ \ \  E_2:=B(x,g(x)^{-1})\cap B(x,|x|) \ \ \ \mbox{and} \ \ \	E_3:=B(x,g(x)^{-1})\cap B(x,|x|)^c,
		\end{array}
	\end{equation*}
where $A^c$ means the complement of the set $A$ in $\R^n$.
Let $I_j$, $j=1,2,3$, be the integral $I$ with the integration over $\R^n$ replaced by the integration over $E_j$. Since, clearly, $I\leq C(I_1+I_2+I_3)$ we see that it suffices to estimate $I_j$.

In what is coming after, the inequalities
\begin{equation}\label{exp}
	|e^{i\theta}-1|\leq 2 \quad\mbox{and}\quad 	|e^{i\theta}-1|\leq |\theta|, \quad\theta\in\R,
\end{equation}
shall be used repeatedly without being mentioned.

The idea to estimate $I_j$ is to use \eqref{exp} and then to explore the radial feature of the resulting function. We begin by estimating $I_1$:
\begin{equation}\label{E1}
\begin{split}
I_1&\le \left(\int_{E_1}\frac{4}{|x-y|^{n+2b}}dy\right)^{1/2}\le C_n\left(\int_{g(x)^{-1}}^{\infty}\frac{r^{n-1}}{r^{n+2b}}dr\right)^{1/2}=C_{n,b}\left(\int_{g(x)^{-1}}^{\infty}r^{-1-2b}dr\right)^{1/2}\\ &\le
C_{n,b}\left(g(x)^{2b} \right)^{1/2}=C_{n,b}g(x)^b.
\end{split}
\end{equation}

For $I_2$ we need to divide into two cases.
	
	\underline{Case 1: $g(x)^{-1}\leq|x|$.} In this case, $E_2=B(x,g(x)^{-1})$. So, by using condition \eqref{AA} we deduce
	\begin{equation}\label{E21}
	\begin{split}
		I_2&\le C_\phi\left(\int_{E_2}\frac{|tg(x)|x-y||^2}{|x-y|^{n+2b}}dy\right)^{1/2}\leq C_{\phi}|t|g(x)\left(\int_{B(x,g(x)^{-1})}|x-y|^{2-n-2b}dw\right)^{1/2}\\&
	\le C_{\phi,n}|t|g(x)\left(\int_{0}^{g(x)^{-1}}r^{1-2b}dr\right)^{1/2}\\&
	=C_{\phi,n,b}|t|g(x)\left(g(x)^{2b-2}\right)^{1/2}= C_{\phi,n,b}|t|g(x)^b.
	\end{split}
	\end{equation}
	
	\underline{Case 2: $|x|<g(x)^{-1}$.} Here we have $E_2=B(x,|x|)\subset B(x,g(x)^{-1})$. Hence, we can use the same calculations as in Case 1 to obtain
	\begin{equation}\label{E22}
	I_2\leq \left(\int_{B(x,g(x)^{-1})}\frac{\left|e^{it(\phi(x)-\phi(y))}-1 \right|^2}{|x-y|^{n+2b}}dy  \right)^{1/2}
	\leq C_{\phi,n,b}|t|g(x)^b.
	\end{equation}
	
	Finally we estimate $I_3$. Note that $E_3$ is an annulus and it is empty if $|x|\geq g(x)^{-1}$. So we will always assume that $|x|\leq g(x)^{-1}$. Here we divide the proof into three cases.
	
	\underline{Case 1: $1\leq|x|$.} In this case we promptly obtain
	\begin{equation}\label{E31}
	\begin{split}
	I_3&\le \left(\int_{E_3}\frac{4}{|x-y|^{n+2b}}dy \right)^{1/2}=C_{n}\left(\int_{|x|}^{g(x)^{-1}}r^{n-1-n-2b}dr\right)^{1/2}\\&\le C_{n}\left(\int_{1}^{g(x)^{-1}}r^{-1-2b}dr\right)^{1/2}\le C_{n,b}\left(1-g(x)^{2b}\right)^{1/2}\le C_{n,b}. 
	\end{split}
	\end{equation}

	\underline{Case 2: $|x|<1<g(x)^{-1}$.}	We split $E_3$ into the sets 
	\begin{equation*}
		E_{31}:=E_3 \cap B(x,1) \ \ \mbox{and} \ \ E_{32}:=E_3 \cap B(x,1)^c.
	\end{equation*}
	Using condition \eqref{BB}, since $2a-1\geq1$, we get
	\begin{equation}\label{E32}
	\begin{split}
		I_3&\le \left(C_\phi\int_{E_{31}}\frac{|x-y|^{2a}}{|x-y|^{n+2b}}dy+ \int_{E_{32}}\frac{2^{2}}{|x-y|^{n+2b}}dy \right)^{1/2}\\ &\le C_{\phi,n}\left(\int_{|x|}^{1}r^{2a-1-2b}dr + \int_{1}^{g(x)^{-1}}r^{-1-2b}dr \right)^{1/2}
	\\ & \le C_{\phi,n}\left(\int_{|x|}^{1}r^{1-2b}dr + \int_{1}^{g(x)^{-1}}r^{-1-2b}dr \right)^{1/2}\\&
	\le C_{\phi,n,b}\left((1-|x|^{2-2b}) + (1-g(x)^{2b}) \right)^{1/2}\\&\le C_{\phi,n,b}.
	\end{split}
	\end{equation}

	\underline{Case 3: $g(x)^{-1}\leq1$.} Here we  use condition \eqref{BB} again to obtain
\begin{equation}\label{E33}
\begin{split}
I_3&\le C_\phi\left(\int_{E_{3}}\frac{|x-y|^{2a}}{|x-y|^{n+2b}}dy\right)^{1/2}=C_{\phi,n}\left(\int_{|x|}^{g(x)^{-1}}\frac{r^{2a+n-1}}{r^{n+2b}}dr\right)^{1/2}\\ & \le C_{\phi,n}\left(\int_{|x|}^{1}r^{2a-1-2b}dr\right)^{1/2}\le C_{\phi,n}\left(\int_{|x|}^{1}r^{1-2b}dr\right)^{1/2}\\&=C_{\phi,n,b}\left(1-|x|^{2-2b}\right)^{1/2}\le C_{\phi,n,b}. 
\end{split}
\end{equation}
From estimates \eqref{E1} to \eqref{E33} we obtain \eqref{deriv}, which proves the theorem for $x\neq 0$.

Finally, if  $x=0$ and $g(0)>0$, the proof above remains equal. In case $g(0)=0$, we divide $\R^n$ into $E_1^0=B(0,1/2)^c$ and $E_2^0=B(0,1/2)$. Note that, as  in \eqref{E1}, it can be seen that $\RNum{1}^0_1\le C_{\phi, n,b}$. Also, the argument in \eqref{E33} remains equal for $\RNum{1}^0_2$. We therefore have $\mathcal{D}^b(e^{i t \phi(\cdot)})(0) \le C_{\phi, n, b}$ and the proof of the theorem is completed.
\end{proof}

\begin{remark}
	It is worth mentioning that Lemma \ref{l1} is still valid if we impose only the weaker condition
\begin{equation}\tag{A'}
	\begin{split}
		&\text{There exists } g:\R^n\to\R \text{ measurable, } g>0 \text{ except maybe at }0,\text{ such that for all }x\in \Rn \\ &\text{ if }|x-y|\le 1\text{ then } |\phi(x)-\phi(y)|\le g(x) |x-y|. \label{AA'}
	\end{split}
\end{equation}
instead of \eqref{AA} and \eqref{BB}. The idea is to consider the sets
	\begin{equation*}
		\begin{array}{c}
			E_1:=B(x,g(x)^{-1})^c, \ \ \  E_2:=B(x,g(x)^{-1})\cap B(x,1) \ \ \ \mbox{and} \ \ \	E_3:=B(x,g(x)^{-1})\cap B(x,1)^c,
		\end{array}
	\end{equation*} and to note that the estimate of $E_3$ is the exactly \eqref{E31}.
\end{remark}

\subsection{Proof of Theorem \ref{linearteo}} This subsection is devoted to prove Theorem \ref{linearteo}. The main tool is the pointwise estimate established in Lemma \ref{l1}.

\begin{proof}[Proof of Theorem \ref{linearteo}] It suffices to prove the theorem with $s=Kb$. So, assume $f:=u_0\in L^2(|x|^{2b}dx)\cap H^{Kb}(\R^n)$. We already now that $L$ generates a unitary group, say,  $\{U(t)\}$ in $H^{Kb}(\R^n)$ such that $U(t)f=(e^{-it\Phi(\cdot)}\widehat{f})^\vee$. From Plancherel's theorem  and \eqref{s,2} we have
	\begin{equation}\label{estrela2*}
\begin{split}
	\||x|^bU(t)f\|_{L^2}&=\|D^b(e^{-it\Phi(\cdot)}\widehat{f})\|_{L^2}\\&
\le C \|e^{-it\Phi(\cdot)}\widehat{f}\|_{L^2}+C\|\mathcal{D}^b(e^{-it\Phi(\cdot)}\widehat{f})\|_{L^2}\\&\le 
C\|f\|_{L^2}+\|\mathcal{D}^b(e^{-it\Phi(\cdot)}\widehat{f})\|_{L^2}.
\end{split}
	\end{equation}
Hence we need to estimate the quantity $\|\mathcal{D}^b(e^{-it\Phi(\cdot)}\widehat{f})\|_{L^2}$.
According to \eqref{2.3} and \eqref{leibp} we have
\begin{equation}\label{estrela2}
	\begin{split}
	 \|\mathcal{D}^b(e^{-it\Phi(\cdot)}\widehat{f})\|_{L^2}&\le \|\widehat{f}\mathcal{D}^b(e^{-it\Phi(\cdot)})\|_{L^2}+\|e^{-it\Phi(\cdot)}\mathcal{D}^b(\widehat{f})\|_{L^2} \\&\le \left\|\widehat{f}\mathcal{D}^b\left(\prod_{i=1}^p e^{-it\phi_i(\cdot)}\right)\right\|_{L^2}+\|\mathcal{D}^b(\widehat{f})\|_{L^2}\\&\le
		\left\|\widehat{f}\sum_{i=1}^{p}\mathcal{D}^b(e^{-it\phi_i(\cdot)})\cdot 1\right\|_{L^2}+\|\mathcal{D}^b(\widehat{f})\|_{L^2}.
	\end{split}
\end{equation}
In view of Lemma \ref{l1},
\begin{equation}\label{estrela3}
\begin{split}
	 \left\|\widehat{f}\sum_{i=1}^{p}\mathcal{D}^b(e^{-it\phi_i(\cdot)})\right\|_{L^2}&\le C \left\|\widehat{f}\sum_{i=1}^{p}\left\{1+(1+|t|)g_i(x)^{b}  \right\}\right\|_{L^2} \\
	&\le C\left\|\widehat{f}\sum_{i=1}^{p}\left\{1+(1+|t|)(1+|x|^{bk_i})  \right\}\right\|_{L^2} \\
	&\le C\left\|\widehat{f}\sum_{i=1}^{p}\left\{1+(1+|t|)(2+|x|)^{bK}  \right\}\right\|_{L^2}
	\\ &\le C(1+|t|)\left\|(1+|x|)^{bK}\widehat{f}\right\|_{L^2} \\ 
	 & \le  C(1+|t|)\|f\|_{bK,2},
\end{split}
\end{equation}
where the constant $C$ depends on $n,b,K$ and $p$. Moreover, since $f\in L^2(|x|^{2b}dx)\cap L^2(\R^n)$ we have  $\widehat{f}\in H^b(\R^n)$ and by Theorem \ref{char},
\begin{equation} 
		\|\mathcal{D}^b(\widehat{f})\|_{L^2}\le C \|\widehat{f}\|_{L^2} + C\|D^b(\widehat{f})\|_{L^2}=C\|f\|_{L^2}+C\||x|^bf\|_{L^2}. \label{estrela4}
	\end{equation}
Gathering together estimates \eqref{estrela2*}-\eqref{estrela4} the proof of Theorem \ref{linearteo} is complete. 
\end{proof}

\subsection{Commutator and interpolation estimates}
We end this section by recalling some commutator and interpolation estimates which will be useful below. We start with the following commutator estimate for homogeneous derivatives.

\begin{lemma}\label{A12}
	Let $s\in(0,1)$. Then
\begin{itemize}
	\item[(i)]  For $1<p<\infty$,
	\begin{equation*}
		\|D^s(fg)-fD^sg-gD^sf \|_{L^p}\le C \|g\|_{L^\infty} \|D^sf\|_{L^p}.
	\end{equation*}
\item[(ii)] For $1<r,p_1,p_2,q_1,q_2<\infty$ satisfying
$$
\frac{1}{r}=\frac{1}{p_1}+\frac{1}{q_1}=\frac{1}{p_2}+\frac{1}{q_2}
$$
it holds
$$
\|D^s(fg)\|_{L^r}\leq C\|f\|_{L^{p_1}}\|D^sg\|_{L^{q_1}}+C\|D^sf\|_{L^{p_2}}\|g\|_{L^{q_2}}.
$$

\item[(iii)] For  $1<p_1,p_2,q_1,q_2<\infty$ satisfying
$$
1=\frac{1}{p_1}+\frac{1}{p_2}, \qquad \frac{1}{2}=\frac{1}{q_1}+\frac{1}{q_2}
$$

we have
\begin{equation*}
	\|D^s(fg)-fD^sg-gD^sf \|_{L^1_xL^2_T}\le C \|g\|_{L^{p_1}_xL^{q_1}_T} \|D^sf\|_{L^{p_2}_xL^{q_2}_T}.
\end{equation*}\end{itemize}
\end{lemma}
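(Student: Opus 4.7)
The three inequalities are variants of the classical Kato--Ponce / Kenig--Ponce--Vega fractional Leibniz and commutator estimates, and the plan is to treat all three parts through a unified Littlewood--Paley / Bony paraproduct decomposition together with the Mikhlin--H\"ormander square-function characterization of $L^p$ for $1<p<\infty$; when convenient, I would simply invoke the corresponding statements from \cite{KPV}.

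For part (i), I would write $fg = T_f g + T_g f + R(f,g)$, where $T_f g$ and $T_g f$ are the high--low paraproducts and $R(f,g)$ is the diagonal remainder. The subtraction $-fD^s g - g D^s f$ is designed to cancel the leading contributions of $D^s T_f g$ and $D^s T_g f$ up to smooth symbol remainders; this is the Coifman--Meyer commutator mechanism. The resulting commutators together with $D^s R(f,g)$ are then bounded in $L^p$ by $\|g\|_{L^\infty}\|D^s f\|_{L^p}$, using Bernstein's inequality to extract the $L^\infty$ norm on the high-frequency pieces of $g$ and the Littlewood--Paley square-function estimate on $f$. An alternative and more elementary route, consistent with the Stein-derivative machinery developed above, is to combine the pointwise inequality \eqref{2.1} with the norm characterization \eqref{s,2}, symmetrizing $\mathcal{D}^b$ to realize the commutator cancellation directly.

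Part (ii) follows by estimating each piece in the paraproduct decomposition separately: the high--low paraproduct $D^s T_f g$ places the derivative on the high-frequency factor $g$ and is bounded via H\"older with exponents $(p_1,q_1)$ to give the first term on the right; the symmetric paraproduct $D^s T_g f$ yields the $(p_2,q_2)$ term; and the diagonal remainder is absorbed into either of these by a Cauchy--Schwarz in the dyadic index together with the Fefferman--Stein vector-valued maximal inequality. This is the sharp Kato--Ponce fractional Leibniz rule exactly as proved in \cite{KPV}.

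Part (iii) is the most delicate point. My strategy is to first establish a pointwise-in-$t$ analogue of (i), then apply Minkowski's inequality to pull the $L^2_T$ norm inside the spatial product arising from the commutator, and finally apply H\"older in the space variable using $1/p_1+1/p_2 = 1$ and $1/q_1+1/q_2 = 1/2$. The main obstacle, and what I expect to be the one real technical difficulty, is the interchange between the $L^2_T$ norm and the Littlewood--Paley square function over the dyadic pieces: one must verify that the Mikhlin--H\"ormander square-function characterization of $L^p$ still holds with uniform constants in the mixed-norm space $L^p_x L^2_T$. This vector-valued extension is standard but requires care, after which the argument is a direct transcription of the proof of (i) into the mixed-norm setting.
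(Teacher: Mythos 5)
The paper offers no proof of this lemma: it simply cites Theorems A.12 and A.13 of \cite{KPV} for parts (i) and (iii) and Proposition 3.3 of \cite{book} for part (ii). Your plan --- either to invoke those references or to reprove the estimates by a Bony paraproduct decomposition plus Coifman--Meyer bilinear theory --- is therefore an acceptable and more self-contained alternative, and your sketches of (i) and (ii) are the standard modern proofs. Two caveats, however. First, the ``more elementary route'' you propose for (i), combining the pointwise inequality \eqref{2.1} with the equivalence \eqref{s,2}, cannot work: \eqref{2.1} only yields a Leibniz-type bound in which a full $s$ derivatives land on $g$, whereas the whole point of (i) is that no derivative of $g$ appears on the right-hand side. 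The absolute value inside $\mathcal{D}^s$ destroys exactly the oscillatory cancellation between $D^s(fg)$ and $fD^sg$ that the commutator estimate exploits (the paper itself makes this observation about $\mathcal{D}^b$ in the discussion following Lemma \ref{l1}). Second, your stated reduction for (iii) --- a fixed-$t$ version of the commutator estimate, then Minkowski, then H\"older in $x$ --- produces a right-hand side with the time norm \emph{outside}, namely $\|g\|_{L^{q_1}_TL^{p_1}_x}\|D^sf\|_{L^{q_2}_TL^{p_2}_x}$, and Minkowski's inequality controls this by the stated norms $\|g\|_{L^{p_1}_xL^{q_1}_T}$, $\|D^sf\|_{L^{p_2}_xL^{q_2}_T}$ only when $p_i\le q_i$; under the hypotheses $1/p_1+1/p_2=1$, $1/q_1+1/q_2=1/2$ one can perfectly well have, say, $p_2>q_2$, so the general statement is not recovered this way. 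The mixed norm with the space variable outside is precisely why Theorem A.13 of \cite{KPV} is a separate theorem: one must run the entire bilinear commutator argument for $L^2_T$-valued functions of $x$ (vector-valued Calder\'on--Zygmund and Littlewood--Paley theory), applying H\"older in $x$ and Cauchy--Schwarz in $t$ pointwise in $x$ at the final step. You correctly identify the vector-valued extension as the crux, so the shortfall is one of execution rather than awareness, but as written the reduction for (iii) does not close the general case.
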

\begin{proof}
For part (i) see Theorem A.12 in \cite{KPV}. For part (ii) see Proposition 3.3 in \cite{book}. For (iii) see Theorem A.13 in \cite{KPV}.
\end{proof}

In the next lemma $\mathcal{S}(\R^n)$ stands for the Schwartz space and $A_p$ denotes the Muckenhoupt class on $\R^n$. More precisely, given $1<p<\infty$, the Muckenhoupt
class $A_p$ consists of all weights $\omega$ such that
\begin{equation}
	[\omega]_p=\sup_{Q}\left(\frac{1}{|Q|}\int_{Q}\omega(y)dy \right)\left(\frac{1}{|Q|}\int_{Q}\omega^{-\frac{1}{p-1}}(y)dy \right)^{p-1}<\infty,
\end{equation}
where the supremum is taken over all cubes $Q\subset\R^n$; additional details and properties may be seen in \cite{commuter} and \cite{Hunt}.

The next result is a version of the Kato-Ponce commutator estimate in weighted spaces.

\begin{lemma}\label{commuter}
	Let $1<p,q<\infty$ and $1/2<\ell<\infty$ be such that $\frac{1}{\ell}=\frac{1}{p}+\frac{1}{q}$.  If $v\in A_p$, $w\in A_q$ and $s>\max\{0, n(\frac{1}{\ell}-1)\}$ or $s$ is a non negative even integer, then for all $f,g\in\mathcal{S}(\R^n)$ we have
	\begin{equation}\label{commuterb}
		\|D^s(fg)-fD^sg\|_{L^\ell(v^{\frac{\ell}{p}} w^{\frac{\ell}{q}})} \le C \|D^s f\|_{L^p(v)}\|g\|_{L^q(w)}+\|\nabla f\|_{L^p(v)}\|D^{s-1}g\|_{L^q(w)}.
	\end{equation}
\end{lemma}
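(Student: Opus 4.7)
The plan is to prove \eqref{commuterb} via a Bony paraproduct decomposition combined with weighted Littlewood--Paley theory for Muckenhoupt weights. Fix an inhomogeneous Littlewood--Paley resolution with frequency projections $\{\Delta_k\}_{k\in\Z}$ and partial sums $S_k=\sum_{j\leq k-1}\Delta_j$, and decompose
\begin{equation*}
fg = \pi_H(f,g) + \pi_L(f,g) + R(f,g),
\end{equation*}
with $\pi_H(f,g)=\sum_k (\Delta_k f)(S_{k-3}g)$ (high-low), $\pi_L(f,g)=\sum_k (S_{k-3}f)(\Delta_k g)$ (low-high), and $R(f,g)=\sum_{|j-k|\leq 2}\Delta_j f\,\Delta_k g$ (resonant). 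Applying the same decomposition to $fD^sg$ and subtracting yields
\begin{equation*}
D^s(fg)-fD^sg \;=\; D^s\pi_H(f,g) \;+\; \sum_k [D^s,S_{k-3}f]\Delta_k g \;+\; \bigl(D^s R(f,g)-R(f,D^sg)\bigr) \;=\; A_1+A_2+A_3,
\end{equation*}
after absorbing the harmless term $\pi_H(f,D^sg)$ into $A_1$ up to a reindexing.

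For $A_2$ I would exploit a pointwise commutator estimate at each dyadic scale. Since the symbol $(|\xi|^s-|\eta|^s)\widehat{S_{k-3}f}(\xi-\eta)\widehat{\Delta_kg}(\eta)$ is supported where $|\xi-\eta|\ll|\eta|\simeq 2^k$, a Taylor expansion gives $|\xi|^s-|\eta|^s\simeq s(\xi-\eta)\cdot\eta\,|\eta|^{s-2}$, and standard kernel bounds yield
\begin{equation*}
\bigl|[D^s,S_{k-3}f]\Delta_k g(x)\bigr|\leq C\,M(\nabla f)(x)\cdot M\bigl(D^{s-1}\widetilde\Delta_k g\bigr)(x),
\end{equation*}
where $M$ is the Hardy--Littlewood maximal operator and $\widetilde\Delta_k$ a slightly enlarged projection. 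Squaring, summing in $k$, and invoking the weighted Littlewood--Paley square function equivalence (valid since $w\in A_q$) controls $\|A_2\|$ by $\|M(\nabla f)\,M(D^{s-1}g)\|_{L^\ell(v^{\ell/p}w^{\ell/q})}$; H\"{o}lder's inequality together with the weighted boundedness of $M$ on $L^p(v)$ and $L^q(w)$ (the Muckenhoupt theorem, using $v\in A_p$, $w\in A_q$) then yields the desired bound $\|\nabla f\|_{L^p(v)}\|D^{s-1}g\|_{L^q(w)}$. The term $A_1$ is more straightforward: $D^s$ essentially acts on the high-frequency factor $\Delta_k f$, so the weighted Fefferman--Stein vector-valued maximal inequality together with the weighted square function equivalence produces $\|A_1\|\leq C\|D^sf\|_{L^p(v)}\|g\|_{L^q(w)}$.

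The main obstacle is the resonant piece $A_3$. Here both factors live at scale $2^k$, but the product can radiate energy all the way down to frequency $0$, so $D^s$ cannot simply be transferred onto a single factor; moreover, in the quasi-Banach range $\ell<1$ one must sum in $k$ without the triangle inequality in $L^\ell$. The remedy is to use the hypothesis $s>n(1/\ell-1)$, which is exactly the threshold ensuring that $\sum_k 2^{k(n/\ell-n-s)}$ converges (for $s$ a non-negative even integer, $D^s$ is local and an ordinary Leibniz expansion replaces this step). Writing each summand of $D^s R(f,g)$ through its convolution kernel, estimating $|D^s(\widetilde\Delta_k f\cdot\Delta_k g)|$ pointwise by $2^{ks}M(\widetilde\Delta_k f)M(\Delta_k g)$, and applying the weighted vector-valued Fefferman--Stein inequality (available because $v\in A_p$ and $w\in A_q$), one obtains an $\ell^2$-summable bound that reassembles via the weighted square function equivalence into a sum of the two terms on the right-hand side of \eqref{commuterb}. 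Combining the estimates for $A_1$, $A_2$, and $A_3$ proves the lemma; the delicate point throughout is that weighted maximal, vector-valued maximal, and Littlewood--Paley inequalities must all be used in tandem, and the assumption $s>\max\{0,n(1/\ell-1)\}$ enters precisely at the summability step for the resonant interaction.
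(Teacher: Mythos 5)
The paper offers no internal proof of this lemma: its ``proof'' is the single line ``See Theorem 1.1 in \cite{commuter}'', so your argument cannot be compared against anything in the paper itself, only against the cited result of Cruz-Uribe and Naibo. Your sketch is essentially a reconstruction of the standard paraproduct proof that underlies that theorem: the splitting into high-low, low-high and resonant pieces, the Coifman--Meyer-type pointwise bound $\bigl|[D^s,S_{k-3}f]\Delta_k g\bigr|\lesssim M(\nabla f)\,M\bigl(D^{s-1}\widetilde{\Delta}_k g\bigr)$ for the low-high term, and the identification of $s>\max\{0,n(\frac{1}{\ell}-1)\}$ as the summability threshold for the resonant interaction are all correct, and they correctly account for the two terms on the right of \eqref{commuterb}. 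What your outline leaves implicit --- and what constitutes most of the actual work in the reference --- is the weighted functional-analytic input: one must verify that the product weight $v^{\ell/p}w^{\ell/q}$ supports the reverse square-function estimate you invoke (for $\ell\le 1$ this requires weighted Hardy-space or Nikolskii-type substitutes, not a literal Littlewood--Paley equivalence), and in the even-integer case the ``ordinary Leibniz expansion'' produces intermediate terms $\partial^\alpha f\,\partial^\beta g$ with $1\le|\alpha|\le s$ that still have to be interpolated back to the two endpoint norms via a weighted Gagliardo--Nirenberg inequality. None of this is a wrong turn --- it is exactly the technical content of the cited theorem --- but as written your argument is a correct outline rather than a complete proof; note also that in the only application the paper makes of the lemma ($n=1$, $\ell=2$, $p=q=4$, $v=w=\langle x\rangle^{r}$) the quasi-Banach difficulties you flag do not arise.
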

\begin{proof}
	See Theorem 1.1 in \cite{commuter}.
\end{proof}

We also need the following characterization for the boundedness of the Hilbert transform in weighted spaces.

\begin{lemma}\label{hilb}
The Hilbert transform is bounded in $L^p(wdx)$, $1<p<\infty$, if and only if $w\in A_p$.
\end{lemma}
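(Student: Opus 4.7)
The plan is to invoke the classical Hunt-Muckenhoupt-Wheeden theorem, which is precisely the statement being claimed; this is one of the cornerstones of the theory of $A_p$ weights, and a self-contained argument would occupy several pages, so I would structure the proof around citing \cite{Hunt} while pointing out the two main ideas of each implication.

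For the sufficiency direction ($w\in A_p$ implies $\mathcal{H}\colon L^p(w)\to L^p(w)$ bounded) the plan is to follow the Coifman-Fefferman scheme in two steps. First, by Muckenhoupt's theorem, the Hardy-Littlewood maximal operator $M$ is bounded on $L^p(w)$ whenever $w\in A_p$. Second, since $A_p\subset A_\infty$, the Coifman-Fefferman good-$\lambda$ inequality
\[
w\bigl(\{|\mathcal{H}f|>2\lambda,\ Mf\le \gamma\lambda\}\bigr)\le C\gamma^{\delta}\, w\bigl(\{|\mathcal{H}f|>\lambda\}\bigr),
\]
valid for all $w\in A_\infty$ and all sufficiently small $\gamma>0$, yields via the layer-cake formula the control $\|\mathcal{H}f\|_{L^p(w)}\le C\|Mf\|_{L^p(w)}$. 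Combining these two facts gives the weighted $L^p$ bound for $\mathcal{H}$. The ingredients required are the weak-type $(1,1)$ estimate for $\mathcal{H}$ and the reverse Hölder inequality characterizing $A_\infty$ weights, both of which are classical.

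For the necessity direction ($\mathcal{H}$ bounded on $L^p(w)$ implies $w\in A_p$) the plan is a direct testing argument. I would plug $f=w^{-1/(p-1)}\chi_I$, for an arbitrary bounded interval $I=[a,b]$, into the assumed inequality $\|\mathcal{H}f\|_{L^p(w)}\le C\|f\|_{L^p(w)}$. For $x$ lying just to the right of $I$, say $x\in[b,b+|I|]$, a direct computation of the principal value integral shows
\[
|\mathcal{H}f(x)|\gtrsim \frac{1}{|I|}\int_I w(y)^{-1/(p-1)}\,dy.
\]
Raising both sides to the $p$-th power, integrating against $w$ over $[b,b+|I|]$, and simplifying, one extracts precisely the $A_p$ bound
\[
\Bigl(\tfrac{1}{|I|}\int_I w\Bigr)\Bigl(\tfrac{1}{|I|}\int_I w^{-1/(p-1)}\Bigr)^{p-1}\le C,
\]
uniformly in $I$. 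The main technical point in both directions lies in the good-$\lambda$ inequality, but as the whole argument is by now standard, I would proceed by citing \cite{Hunt} and skipping the detailed verification.
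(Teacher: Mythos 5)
Your proposal is correct and takes essentially the same route as the paper: Lemma \ref{hilb} is the classical Hunt--Muckenhoupt--Wheeden theorem, and the paper's entire proof is the citation to Theorem 9 of \cite{Hunt}, which you also invoke. The additional sketch you give (Coifman--Fefferman good-$\lambda$ argument for sufficiency, testing with $f=w^{-1/(p-1)}\chi_I$ for necessity) is accurate but not needed here.
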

\begin{proof}
	See Theorem 9 in \cite{Hunt}.
\end{proof}

We finally introduce two interpolation inequalities. 

\begin{lemma}
	Assume $a, b>0$, $1<p<\infty$ and $\theta\in(0,1)$. If $J^a f\in L^p(\R^n)$ and $\langle x \rangle^b f\in L^p(\R^n)$ then 
	\begin{equation}\label{interpp}
		\|\langle x\rangle^{(1-\theta)b}J^{\theta a}f\|_{L^p(\R^n)}\le C \|\langle x\rangle^b f\|_{L^p(\R^n)}^{1-\theta}\|J^a f\|_{L^p(\R^n)}^\theta.
	\end{equation}
	The same holds for $D$ instead of $J$. Moreover, for $p=2$ we have
	\begin{equation}\label{interp2}
		\left\|J^{\theta a}\left(\langle x\rangle^{(1-\theta)b}f\right)\right\|_{L^2(\R^n)}\le C \|\langle x\rangle^b f\|_{L^2(\R^n)}^{1-\theta}\|J^a f\|_{L^2(\R^n)}^\theta.
	\end{equation}
\end{lemma}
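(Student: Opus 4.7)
The plan is to derive both inequalities from \emph{Stein's analytic interpolation theorem} (a.k.a.\ the three-lines lemma for analytic families of operators), applied to carefully chosen analytic families for which the endpoint bounds are either trivial or reduce to standard multiplier theorems.

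For the first inequality, I would define, for $z$ in the closed strip $\{0\le \mathrm{Re}(z)\le 1\}$, the analytic family
\[
T_z f := \langle x\rangle^{(1-z)b}\, J^{za} f.
\]
On the line $\mathrm{Re}(z)=0$, $T_{it}f=\langle x\rangle^b \langle x\rangle^{-itb}f$, and since $|\langle x\rangle^{-itb}|\equiv 1$ one has $\|T_{it}f\|_{L^p}=\|\langle x\rangle^b f\|_{L^p}$. On the line $\mathrm{Re}(z)=1$ one writes $T_{1+it}f=\langle x\rangle^{-itb}\, J^{ita}(J^a f)$; bounding $|\langle x\rangle^{-itb}|\equiv 1$ and invoking the classical fact that the imaginary powers $J^{ita}$ are bounded on $L^p(\R^n)$, $1<p<\infty$, with norm growing at most polynomially in $|t|$ (a consequence of the Mihlin--H\"ormander multiplier theorem applied to the symbol $\langle\xi\rangle^{ita}$), we get $\|T_{1+it}f\|_{L^p}\le C(1+|t|)^N\|J^a f\|_{L^p}$. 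Stein's theorem then interpolates at $z=\theta$ to give exactly \eqref{interpp}. The statement for $D^s$ is obtained by the same scheme, using that $D^{ita}=|\nabla|^{ita}$ is a bounded $L^p$-multiplier for $1<p<\infty$ (again by Mihlin, since the symbol $|\xi|^{ita}$ is smooth away from the origin and satisfies the required dilation-invariant bounds), so the only change is in the multiplier step of the endpoint on $\mathrm{Re}(z)=1$.

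For the second inequality ($p=2$) I would swap the roles of the weight and the derivative and define
\[
S_z f := J^{za}\!\left(\langle x\rangle^{(1-z)b} f\right).
\]
On $\mathrm{Re}(z)=0$, Plancherel makes $J^{ita}$ an $L^2$-isometry, so $\|S_{it}f\|_{L^2}=\|\langle x\rangle^{(1-it)b} f\|_{L^2}=\|\langle x\rangle^b f\|_{L^2}$. On $\mathrm{Re}(z)=1$, the same isometry property yields $\|S_{1+it}f\|_{L^2}=\|J^a(\langle x\rangle^{-itb} f)\|_{L^2}$, and we need this to be dominated by $C(1+|t|)^N\|J^a f\|_{L^2}$. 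Since $\langle x\rangle^{-itb}$ is a smooth function with $|\partial^\alpha \langle x\rangle^{-itb}|\le C_\alpha(1+|t|)^{|\alpha|}\langle x\rangle^{-|\alpha|}$, the Kato--Ponce product rule (Lemma~\ref{commuter} with $p=q=\infty$ in the appropriate form, or Proposition~3.3 of \cite{book}) gives $\|J^a(\langle x\rangle^{-itb} f)\|_{L^2}\le C(1+|t|)^{\lceil a\rceil}\|J^a f\|_{L^2}$. Interpolating $S_z$ at $z=\theta$ yields \eqref{interp2}.

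The main obstacle I anticipate is precisely the $\mathrm{Re}(z)=1$ endpoint of the second inequality: unlike the first family, one cannot just quote that an imaginary power of $J$ is an $L^p$-multiplier, because here the imaginary operator sits \emph{outside} the multiplication by the weight and one must show that the pointwise multiplier $\langle x\rangle^{-itb}$ preserves $H^a$ with only polynomial loss in $|t|$. This is a Kato--Ponce / fractional Leibniz estimate applied to a function whose Sobolev-type seminorms blow up linearly in $|t|$; once that quantitative bound is in hand, Stein's interpolation theorem closes the argument verbatim.
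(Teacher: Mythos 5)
Your argument for \eqref{interp2} is essentially sound and, in fact, more self-contained than the paper's: the paper simply derives \eqref{interp2} from \eqref{interpp} with $p=2$ by taking Fourier transforms (Plancherel interchanges multiplication by $\langle x\rangle^s$ with $J^s$ acting on $\widehat f$, so the two inequalities are equivalent at $p=2$ after swapping the roles of $a$ and $b$ and of $\theta$ and $1-\theta$), and it quotes \cite{NAHASPONCE} and \cite{Ademir} for \eqref{interpp} itself. Your endpoint bound $\|J^a(\langle x\rangle^{-itb}f)\|_{L^2}\le C(1+|t|)^N\|J^af\|_{L^2}$ is correct, although Lemma \ref{commuter} as stated does not allow $p=q=\infty$; the clean justification is that multiplication by a function whose derivatives up to order $\lceil a\rceil$ are bounded by $C(1+|t|)^{\lceil a\rceil}$ is bounded on $H^a(\R^n)$ with at most that operator norm.

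The proof of \eqref{interpp}, however, has a genuine gap at the endpoint $\mathrm{Re}(z)=0$. With $T_zf=\langle x\rangle^{(1-z)b}J^{za}f$ one has $T_{it}f=\langle x\rangle^{(1-it)b}J^{ita}f$, not $\langle x\rangle^{(1-it)b}f$: the imaginary power does not disappear at $z=it$, so the claimed identity $\|T_{it}f\|_{L^p}=\|\langle x\rangle^bf\|_{L^p}$ is false. What is actually needed there is $\|\langle x\rangle^bJ^{ita}f\|_{L^p}\le C(1+|t|)^N\|\langle x\rangle^bf\|_{L^p}$, i.e.\ boundedness of $J^{ita}$ on $L^p(\langle x\rangle^{bp}dx)$ --- precisely the ``imaginary operator sitting outside the weight'' obstacle that you correctly identified for the family $S_z$, only at the other endpoint. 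The Mihlin--H\"ormander theorem gives nothing in weighted $L^p$; the Calder\'on--Zygmund route requires $\langle x\rangle^{bp}\in A_p$, which fails as soon as $bp\ge n(p-1)$, and for the homogeneous family $D^{ita}$ the $A_p$ condition is essentially necessary (compare Lemma \ref{hilb}). For $p=2$ the endpoint can be repaired by passing to the Fourier side, where it becomes the same multiplier-preserves-$H^b$ statement you use for $S_{1+it}$; for general $p\in(1,\infty)$ and arbitrary $b>0$ one must instead exploit the specific structure of the kernel of $J^{ita}$ (a compactly supported Calder\'on--Zygmund part plus a rapidly decaying tail, acting against a weight that is uniformly locally $A_p$), and that extra argument is missing. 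As written, the interpolation for $T_z$ does not close, and with it the claimed proofs of \eqref{interpp} for $J$ and for $D$.
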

\begin{proof}
Inequality	\eqref{interp2} follows from \eqref{interpp} in view of Plancherel's identity. For the proof of \eqref{interpp} see Lemma 4 in \cite{NAHASPONCE} and Lemma 2.7 in \cite{Ademir}.
\end{proof}

\section{Local well-posedness in weighted spaces} \label{sec3}

This section is devoted to prove our local well-posedness results in the spaces $Z_{s,b}$. In all cases, the main idea is to use the technique introduced in \cite{KPV} which combines Strichartz-type estimates, Kato's smoothing effects and a maximal function estimate with the contraction mapping principle to obtain a unique fixed point (the solution) of the corresponding integral equation.

\subsection{The Hirota-Satsuma system} \label{sec3.1}
Denote by $U_a(t)$ the unitary group associated with the linear part of the first equation in \eqref{system}, that is, $U_a(t)f=(e^{-ita\xi^3}\widehat{f})^\vee$ and set $U(t)\equiv U_{-1}(t)$. It is clear that conditions (A) and (B) are satisfied by the phase function $\Phi(x)=ax^3$, for any $a\neq0$.

Before proving Theorem \ref{persistHSS}, we recall the strategy to prove Theorem \ref{borys}. For $T>0$  set 
\[
\begin{split}
		\Lambda_s^T(w)&:=\max_{[-T,T]}\|w(t)\|_{s,2} + \|\partial_x w\|_{L^4_T L^\infty_x}+\|D^s_x\partial_x w\|_{L^\infty_x L^2_T} + (1+T)^{-1/2}\|w\|_{L^2_xL^\infty_T}+\|\partial_x w\|_{L^\infty_x L^2_T}.
\end{split}
\]
In \cite{HSS} it was shown that the map $\Psi(u,v)=(\Psi_1(u,v),\Psi_2(u,v))$ defined by
\begin{equation*}
	\left\{	\begin{array}{l}
		\Psi_1(u,v)(t)=U_a(t)u_0+\displaystyle\int_{0}^{t}U_a(t-t^\prime)(6au\partial_xu - 2rv\partial_x v)(t^\prime)dt^\prime, \\
		\Psi_2(u,v)(t)=U(t)v_0-3\displaystyle\int_{0}^{t}U(t-t^\prime)(u\partial_x v)(t^\prime)dt^\prime,
	\end{array}\right.
\end{equation*}
is a contraction in the space 
$$X^T_M := \{(u,v)\in C([-T,T],H^s(\R))\times 
C([-T,T],H^s(\R)) \mid \Lambda_s^T(u)+\Lambda_s^T(v)\le M \}, 
$$
for a suitable choice of the parameters $T$ and $M$ with
\begin{equation}\label{hiro1}
\Lambda_s^T(\Psi_1(u,v))+\Lambda_s^T(\Psi_2(u,v))\le C\|u_0\|_{s,2}+C\|v_0\|_{s,2}+CT^{1/2}(T^{1/4}+(1+T)^{1/2})M^2,
\end{equation}
for some universal constant $C>0$ and any $(u,v)\in X_M^T$. From the contraction mapping principle one obtains the unique solution.

\begin{proof}[Proof of Theorem \ref{persistHSS}]	
We follow the same strategy described above.	Consider $$\lambda^T(w):=\max\limits_{[-T,T]}\||x|^b w\|_{L^2_x}.$$  We are going to prove that $\Psi(u,v)$ is a contraction in the space 
	$$Y^T_M := \{(u,v)\in C([-T,T],Z_{s,b})\times 
	C([-T,T],Z_{s,b}) \mid \Omega_s^T(u)+\Omega_s^T(v)\le M \}, $$ endowed with the norm $\vertiii{(u,v)}:=\Omega_s^T(u)+\Omega_s^T(v)$, where $\Omega_s^T(w)=\Lambda_s^T(w)+\lambda^T(w)$ and $T,M>0$ will be determined later. 
	
	We begin by estimating $\Psi_1(u,v)$ for $(u,v)\in Y_M^T$. In view of \eqref{hiro1} it suffices to estimate $\lambda^T(\Psi_1(u,v))$. Using Minkowski's inequality we obtain
	\begin{equation}
	\begin{split}
	\||x|^b\Psi_1(u,v)\|_{L^2_x}&\le \||x|^bU_a(t)u_0\|_{L^2_x}+\displaystyle\int_{0}^{T}\||x|^bU_a(t-t')(6au\partial_xu+2rv\partial_x v)(t')\|_{L^2_x}dt' \\ 
		&\le \||x|^bU_a(t)u_0\|_{L^2_x}+\displaystyle\int_{0}^{T}\|(|x|^bU_a(t-t')6au\partial_xu)(t')\|_{L^2_x}dt' \\
		 &\quad +\displaystyle\int_{0}^{T}\|(|x|^bU_a(t-t')2rv\partial_x v)(t')\|_{L^2_x}dt'\\
		 & \le \RNum{1}+\RNum{2}+\RNum{3}.\label{tele1}
	\end{split}
	\end{equation}
In view of \eqref{trocas},
	\begin{align}\label{doo1}
		\RNum{1}&\le C\||x|^bu_0\|_{L^2_x} + C(1+T)\|u_0\|_{s,2}. 
	\end{align}
for some positive constant $C$ (depending on $s$). Another application of \eqref{trocas} combined with H\"older's inequality gives
\[
\begin{split}
		\RNum{2}&\le \displaystyle\int_{0}^{T}C\|(|x|^b u\partial_xu)(t')\|_{L^2_x}+C(1+T)\|(u\partial_xu)(t')\|_{s,2}dt'\\
	&\le CT^{1/2}(1+T)\left\{ \||x|^b u\partial_x u \|_{L^2_TL^2_x}+\|u\partial_x u \|_{L^2_TL^2_x} + \|D^s_x(u\partial_x u) \|_{L^2_TL^2_x}\right\}
\end{split}
\]
Since $\Lambda_s^T$ contains the $L^\infty_TH^s$ norm, the last two terms in the above inequality have already been estimated in \cite[Theorem 2.1]{HSS}; more precisely,
\begin{equation}\label{tele0}
	\|u\partial_x u \|_{L^2_TL^2_x} + \|D^s_x(u\partial_x u) \|_{L^2_TL^2_x}\leq  CT^{1/2}(T^{1/4}+(1+T)^{1/2})M^2.
\end{equation}
To bound the remaining term we use H\"older's inequality to deduce
\[
\begin{split}
 \||x|^b u\partial_x u \|_{L^2_TL^2_x}\leq 	T^{1/4} \displaystyle\max_{[-T,T]}\||x|^bu\|_{L^2_x}\|\partial_xu\|_{L^4_TL^\infty_x} \leq T^{1/4}(\Omega_s^T(u))^2\leq T^{1/4}M^2.
\end{split}
\] 
Hence
	\begin{equation}\label{localt}
		\begin{array}{l}
	\RNum{2}\le
	 CT^{1/2}(1+T)(T^{1/4}+(1+T)^{1/2})M^2.
		\end{array}
	\end{equation}
A similar computation establishes
	\begin{equation}
		\RNum{3}\le CT^{1/2}(1+T)(T^{1/4}+(1+T)^{1/2})M^2.\label{doo3}
	\end{equation}
	
	Estimates \eqref{doo1}-\eqref{doo3} yield
	\begin{equation*}
		\lambda^T(\Psi_1(u,v))\le C\||x|^bu_0\|_{L^2_x} + C(1+T)\|u_0\|_{s,2} + CT^{1/2}(1+T)(T^{1/4}+(1+T)^{1/2})M^2.
	\end{equation*}
	
By using the same argument it can be seen that
	\begin{align*}
		\lambda^T(\Psi_2(u,v))&\le \||x|^bU(t)v_0\|_{L^2_x}+\displaystyle\int_{0}^{T}\||x|^bU(t-t')(u\partial_x v)(t')\|_{L^2_x}dt' \\ &\le C\||x|^bv_0\|_{L^2_x} + C(1+T)\|v_0\|_{s,2} + CT^{1/2}(1+T)(T^{1/4}+(1+T)^{1/2})M^2.
	\end{align*}

Collecting these estimates we get
\[
\begin{split}
\Omega_s^T(\Psi_1(u,v))+\Omega_s^T(\Psi_2(u,v))&\leq C\Big\{\||x|^bu_0\|_{L^2_x}+|x|^bv_0\|_{L^2_x}+(1+T)(\|u_0\|_{s,2}+\|v_0\|_{s,2})\Big\}\\
&\quad +CT^{1/2}(1+T)(T^{1/4}+(1+T)^{1/2})M^2.
\end{split}
\]
By choosing $$M=2C\Big\{\||x|^bu_0\|_{L^2}+\||x|^bv_0\|_{L^2}+2(\|u_0\|_{s,2}+\|v_0\|_{s,2}) \Big\}$$ and $0<T\leq1$ sufficiently small such that $$2CT^{1/2}(1+T)(T^{1/4}+(1+T)^{1/2})M\le 1$$ we deduce  that $\Psi:Y^T_M\to Y^T_M$ is well defined.  Moreover, similar arguments
show that $\Psi$ is a contraction. The rest of the proof follows from standard arguments; thus
we omit the details. 
\end{proof} 

\subsection{The OST equation}

In \cite{XM}, to prove Theorem \ref{t1.1}, besides Strichartz's estimates, the authors used the contraction principle with a refined smoothing effect for the semigroup \begin{equation}\label{jazz}V(t)u_0=\left(e^{-it\Phi(\cdot)}\widehat{u_0} \right)^\vee, \ \ \mbox{where} \ \Phi(\xi)=-\xi^3-\eta(|\xi|-|\xi|^3).\end{equation}
In particular the next lemma was established.

\begin{lemma}\label{corol}
	If $u_0\in H^s(\R)$, $0<s\le 1$, $0<T<1$ and $\gamma:=\min\{\frac{1}{2}, \frac{2s}{3}\}$, then 
	\begin{equation*}
		\|\partial_x V(t)u_0\|_{L^2_T L^\infty_x}\le CT^\gamma \|D^s_xu_0\|_{L^2},
	\end{equation*}
for some constant $C>0$ depending on $\eta$ and $s$.
\end{lemma}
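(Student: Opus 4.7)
The strategy is to split the data in frequency, $u_0=u_0^\ell+u_0^h$ with $\widehat{u_0^\ell}=\chi\widehat{u_0}$ for a fixed cutoff $\chi\in C_c^\infty(\R)$ equal to $1$ on $[-1,1]$, and to treat the two pieces separately, exploiting the cubic asymptotic behavior of the OST phase $\Phi(\xi)=-\xi^3-\eta(|\xi|-|\xi|^3)$ at high frequencies.

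For the low-frequency piece, Fourier inversion combined with Cauchy--Schwarz in the frequency variable gives
\[
\|\partial_x V(t)u_0^\ell\|_{L^\infty_x}\le\int_{|\xi|\le 2}|\xi|\,|\widehat{u_0}(\xi)|\,d\xi\le\Big(\int_{|\xi|\le 2}|\xi|^{2-2s}\,d\xi\Big)^{1/2}\|D^s u_0\|_{L^2}\lesssim\|D^s u_0\|_{L^2},
\]
the integral being finite since $s<3/2$. Taking the $L^2_T$ norm over $[0,T]$ then produces the bound $CT^{1/2}\|D^s u_0\|_{L^2}$ for the low-frequency contribution, which is compatible with $\gamma=\min\{1/2,2s/3\}$ on the range $s\ge 3/4$ and is strictly better on the complementary range (the surplus is harmlessly discarded).

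For the high-frequency piece, $\Phi'(\xi)\sim\xi^2$ and $\Phi''(\xi)\sim\xi$ when $|\xi|\ge 1$, so by a Van der Corput / stationary phase analysis $V(t)P_{\ge 1}$ inherits the dispersive decay of the Airy group, with the Hilbert-type correction $-\eta|\xi|$ acting as a bounded lower-order perturbation. I would use a Littlewood--Paley decomposition $u_0^h=\sum_{N\ge 1}P_N u_0$ and combine two endpoint estimates: a trivial Sobolev endpoint $\|\partial_x V(t)u_0^h\|_{L^2_TL^\infty_x}\lesssim T^{1/2}\|u_0^h\|_{H^{3/2^+}}$ obtained from $H^{1/2^+}\hookrightarrow L^\infty$ and the isometry of $V(t)$ on Sobolev spaces; and a refined Kenig--Ponce--Vega Strichartz estimate for the Airy-type group, with a smaller $T$-power at lower regularity. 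Real interpolation in the Sobolev scale together with $\ell^2$-summation of the dyadic blocks then yields the exponent $\gamma=\min\{1/2,2s/3\}$; the ratio $2/3$ is the signature of the Airy scaling $(x,t)\mapsto(\lambda x,\lambda^3 t)$, which matches each derivative of Sobolev regularity to $2/3$ of a unit of temporal integrability.

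\textbf{Main obstacle.} The principal technical difficulty is obtaining the lower-regularity Strichartz endpoint with the correct sharp $T$-power, which is what drives the interpolated exponent $2s/3$ when $s<3/4$. A secondary issue is treating the Hilbert perturbation $\eta|\xi|$ uniformly in the Littlewood--Paley pieces, but since it is subdominant to $\xi^3$ for $|\xi|\ge 1$ it can be absorbed into the constants without disturbing the scale invariance that underlies the interpolation.
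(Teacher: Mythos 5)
There is a genuine gap here, and it is structural rather than technical. Your outline treats $V(t)$ as a unitary, Airy-like group: you invoke ``the isometry of $V(t)$ on Sobolev spaces,'' Van der Corput bounds, Kenig--Ponce--Vega Strichartz estimates, and you propose to absorb the correction coming from $\eta$ ``into the constants'' as a bounded lower-order perturbation. But that correction is not lower order and, more importantly, it does not enter the exponent with a factor of $i$. The OST multiplier is $\widehat{V(t)u_0}(\xi)=e^{it\xi^3}e^{\eta t(|\xi|-|\xi|^3)}\widehat{u_0}(\xi)$ (note that $\mathcal{H}\partial_x=D$ is a nonnegative self-adjoint operator, not skew-adjoint, which is also why \eqref{OST} is posed only for $t>0$): $V(t)$ is a \emph{dissipative} semigroup with parabolic-type smoothing at high frequency, the term $\eta|\xi|^3$ being of the same order as the main symbol. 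The entire content of the lemma is this smoothing: it gains a full spatial derivative in an $L^\infty_x$-based norm from data in $H^s$ with $s$ arbitrarily close to $0$, which no unitary cubic-phase group can deliver. Indeed, for the unitary group underlying your outline the stated estimate is false for $0<s<1/2$: taking $\widehat{u_0}=\mathbf{1}_{[N,N+1]}$ gives $\|D^su_0\|_{L^2}\sim N^s$, while near the stationary point $x\approx -3tN^2$ the phase varies by $O(1)$ over the frequency interval for $0\le t\lesssim N^{-1}$, so $\|\partial_xU(t)u_0\|_{L^\infty_x}\gtrsim N$ there and $\|\partial_xU(t)u_0\|_{L^2_TL^\infty_x}\gtrsim N^{1/2}$. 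Consequently no interpolation between correct unitary Strichartz endpoints can produce the claimed bound, and the step you flag as the ``main obstacle'' (the low-regularity endpoint with the sharp $T$-power) is not merely difficult along your route --- it is unobtainable.

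For comparison: the paper does not reprove this lemma but cites \cite[Corollary 2.2]{XM}, where the argument goes through the explicit decaying multiplier. Schematically, $\|\partial_xV(t)u_0\|_{L^\infty_x}\le\bigl\|\,|\xi|^{1-s}e^{\eta t(|\xi|-|\xi|^3)}\,\bigr\|_{L^2_\xi}\|D^su_0\|_{L^2}$, and the Gaussian-type frequency integral produces a singularity $t^{-\sigma(s)}$ with $2\sigma(s)<1$, so the $L^2_T$ norm in time converges and yields the positive power $T^\gamma$ after the bookkeeping in \cite{XM}. Your low-frequency computation is fine as written (the factor $e^{\eta t(|\xi|-|\xi|^3)}$ is bounded there and the resulting $T^{1/2}$ dominates $T^\gamma$ since $T<1$), but the high-frequency half of your plan must be replaced by a direct estimate exploiting the exponential decay of the multiplier; there is no purely dispersive substitute for it.
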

\begin{proof}
 See Corollary 2.2 in \cite{XM}.	
\end{proof}

As before, we rush an overview of the proof of Theorem \ref{t1.1}. Consider the space $$X_M^T=\{w\in C([0,T];H^s(\R)) \mid \Lambda^T(w)\le M \},$$ with
\begin{align*}
	\Lambda^T(w)=\sum_{i=1}^{5}\lambda_i^T(w):=&\max_{[0,T]}\|w\|_{s,2}+ \|\partial_x w\|_{L^2_TL^{p_1}_x}+\|D^s\partial_x w\|_{L^2_T L^{p_1}_x}\\&+ T^{-\gamma(p_1)}\|w\|_{L^2_TL^{q_1}_x}+ T^{-\gamma(p_1)}\|D^s w\|_{L^2_TL^{q_1}_x},
\end{align*}
where $\gamma(p_1)$ is a positive constant depending only on $p_1$.
The authors, in \cite{XM} then proved that the map $\Psi:X_M^T\to X^T_M$, defined by
\begin{equation*}
	\Psi(u)(t)=V(t)u_0-\int_{0}^{t}(V(t-t')u\partial_x u )(t')dt',
\end{equation*} 
is a contraction, for a suitable choice of $T$ and $M$ satisfying
\begin{equation}\label{chihcarra0}
	\Lambda^T(\Psi(u))\le C\|u_0\|_{s,2}+CT^{\gamma(p_1)}M^2,
\end{equation}
for some positive constant $C$ and any $u\in X_M^T$.

In order to prove Theorem \ref{persistost}, note that the phase $\Phi$ in \eqref{jazz} satisfy the conditions of Theorem \ref{linearteo} because it is a combination of particular cases of functions $\phi$ mentioned in the introduction. 
We therefore may use \eqref{trocas}. 

\begin{proof}[Proof of Theorem \ref{persistost}] We provide details for the computations when $0<s<1$. 
	Set $\gamma=\min\{1/2, 2s/3\}.$ For $0<T<1$, in addition to the norms in $\Lambda^T$, consider $\lambda_6^T(w):=T^{-\gamma}\|\partial_x w\|_{L^2_TL^\infty_x}$ and $\lambda_7^T(w):=\||x|^b w\|_{L^\infty_TL^2_x}$.
	Define $$Y^T_M:=\{ w\in C([0,T];Z_{s,b}) \mid \Omega^T(w)\le M  \}\ \ \mbox{where} \ \ \Omega^T(w)=\Lambda^T(w)+\lambda_6^T(w)+\lambda_7^T(w).$$ 

We will show that for suitable choices of $M$ and $T$, the map $\Psi:Y_M^T\to Y_M^T$ is well defined and is a contraction. From \eqref{chihcarra0} it remains to estimate the norms $\lambda_6^T$ and $\lambda_7^T$. In view of Lemma \ref{corol} we have	\begin{equation}\label{chicharra}
	\begin{split}
	 \lambda_6^T(\Psi(u))&\le T^{-\gamma}\|\partial_xV(t)u_0\|_{L^2_TL^\infty_x}+T^{-\gamma}\left\|\partial_xV(t)\int_{0}^{t}(V(-t')u\partial_x u)(t')dt'\right\|_{L^2_TL^\infty_x}\\
		&\le C\|D^su_0\|_{L^2_x}+C\left\| D^s_x\int_{0}^{t}(V(-t')u\partial_x u)(t')dt'\right\|_{L^2_x}\\
		&\leq C\|u_0\|_{s,2}+\int_{0}^{T}\|D^s_x(u\partial_x u)(t')\|_{L^2_x}dt'\\
		&\equiv C(\|u_0\|_{s,2}+\RNum{1}).
	\end{split}
	\end{equation}
According to the fractional Leibniz rule (see Lemma \ref{A12}) we have
	\begin{align*}
		\|D^s_x(u\partial_x u)\|_{L^2}&\le  C\|u\|_{L^{q_1}}\|D^s_x\partial_x u\|_{L^{p_1}}+C\|\partial_xu\|_{L^{p_1}}\|D^s_x u\|_{L^{q_1}}.
	\end{align*}
	Therefore, from H\"older's inequality, we deduce
	$$I\le C\|u\|_{L^2_TL^{q_1}_x}\|D^s_x\partial_x u\|_{L^2_T L^{p_1}_x}+ C\|\partial_x u\|_{L^2_TL^{p_1}_x}\|D^s_x u\|_{L^2_TL^{q_1}_x}\le CT^{\gamma(p_1)}\Omega^T(u)^2.$$
	We conclude from \eqref{chicharra} that 
	$$\lambda_6^T(\Psi(u))\le C\|u_0\|_{s,2}+CT^{\gamma(p_1)}\Omega^T(u)^2.$$
	
Besides, using Theorem \ref{linearteo}, we get
\begin{equation}
	\begin{split}
	\||x|^b\Psi(u)\|_{L^2_x}&\le \||x|^bV(t)u_0\|_{L^2_x}+\int_{0}^{T}\left\||x|^b V(t-t')(u\partial_x u)(t') \right\|_{L^2}dt'\\
	&\le
		C(1+T)\|u_0\|_{s,2}+C\||x|^b u_0\|_{L^2_x}+C\int_{0}^T(1+T)\|u\partial_x u\|_{s,2}dt'\\
		&\quad+C\int_{0}^{T}\||x|^bu\partial_xu\|_{L^2_x}dt'\\
		&=C(1+T)\|u_0\|_{s,2}+C_s\||x|^b u_0\|_{L^2_x}+\RNum{2}+\RNum{3}.\label{022}
	\end{split}
\end{equation}
The term $\RNum{2}$ can be estimated as done with $\RNum{1}$ (actually, this term has already been estimated in the $H^s(\R)$ local theory). In particular, we obtain
	\begin{equation}
		\RNum{2}\le C(1+T)T^{\gamma(p_1)}\Lambda^T(u)^2\le C(1+T)T^{\gamma(p_1)}\Omega^T(u)^2.
	\end{equation}
	In what comes to $\RNum{3}$ we use H\"older's inequality as follows:
	\begin{align}
		\RNum{3}&\le CT^{1/2}\||x|^bu\partial_x u\|_{L^2_TL^2_x}\le CT^{1/2}\max_{[0,T]}\||x|^bu\|_{L^2_x}\|\partial_xu\|_{L^2_TL^\infty_x}	\nonumber\\&\le CT^{1/2+\gamma}\lambda_6^T(u)\lambda_7^T(u)\le CT^{1/2+\gamma}\Omega^T(u)^2.\label{024}
	\end{align}
	From \eqref{022}-\eqref{024}	we conclude
	\begin{equation*}
		\lambda_7^T(u)\le C(1+T)\|u_0\|_{s,2}+C\||x|^b u_0\|_{L^2_x}+C(1+T)(T^{1/2+\gamma}+T^{\gamma(p_1)})\Omega^T(u)^2.
	\end{equation*}

Gathering together the above estimates we finally obtain
$$
\Omega^T(\Psi(u))\le C(1+T)\|u_0\|_{s,2}+C\||x|^b u_0\|_{L^2_x}+C(1+T)(T^{1/2+\gamma}+T^{\gamma(p_1)})\Omega^T(u)^2
$$
	
	By setting $M=2C\left\{2\|u_0\|_{s,2}+\||x|^b u_0\|_{L^2}\right\}$ and taking $0<T<1$ such that $$C(1+T)(T^{1/2+\gamma}+T^{\gamma(p_1)})M\le \frac{1}{2}$$ it can be seen that $\Phi:Y^T_M\to Y^T_M$ is well defined. Moreover, similar arguments show that
	$\Psi$ is a contraction. To finish the proof we use standard arguments, thus, we omit the details.
\end{proof}

\begin{remark}
	In \cite{esfahani}, using a purely dissipative method, the author established the local well-posedness of \eqref{OST} in $H^s(\R)$ for $s>-3/2$. However, as we already said,  the relation between decay and low regularity is not well understood; so, we are not able to establish a local well-posedness result in $Z_{s,b}$ for indices $s\leq 3/4$.
\end{remark}

\subsection{Kawahara equation}

Denote by $W(t)$ the unitary group associated to the linear part of the problem \eqref{kawaivp}, that is, \begin{equation}\label{necesidade}W(t)u_0(x)=\left(e^{it(-\gamma\xi^5+\beta\xi^3)}\widehat{u_0}\right)^\vee(x).
\end{equation}
For $M,T>0$ and $s>1/4$, consider the space
$$X_M^T:=\{w\in C([-T,T];H^s(\R))\mid \Lambda^T(w)\le M\},$$
where
$$
\Lambda^T(w):=\max_{[-T,T]}\|w\|_{s,2}+\|\partial_x w\|_{L^4_TL^\infty_x}+\|w\|_{L^4_xL^\infty_T} + \|D^{s+2}w\|_{L^\infty_xL^2_T}+\|D^{s}_x\partial_xw\|_{L^4_xL^2_T}
$$
In \cite{KAWA} the authors showed that the  integral equation
\begin{equation*}
	\Psi(u)(t)=W(t)u_0+\alpha\int_{0}^{t}W(t-t')(u\partial_x u)(t')dt'
\end{equation*} 
is a contraction in $X_M^T$  with
\begin{equation}\label{messi0}
	\Lambda^T(\Psi(u))\leq C\|u_0\|_{s,2}+CT^{1/2}\Lambda^T(u)^2,
\end{equation}
for some $C>0$ and any $u\in X_M^T$.

Moreover, the following lemma was established:
\begin{lemma}\label{lemonkawa}
	Let $s>1/4$ and $0<T\leq1$. If $\Lambda^T(u)<\infty$ then $u\partial_x u \in L^2([-T,T];H^s(\R))$ and $$\left(\int_{-T}^T\left\|(u\partial_x u)(t')\right\|_{s,2}^2dt'\right)^{1/2}\le C\Lambda^T(u)^2.$$ 
\end{lemma}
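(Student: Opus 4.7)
The plan is to bound $\|u\partial_x u\|_{L^2_T H^s_x}$ by splitting the Sobolev norm:
\[
\|u\partial_x u\|_{s,2} \simeq \|u\partial_x u\|_{L^2_x} + \|D^s(u\partial_x u)\|_{L^2_x},
\]
and then to control each piece in $L^2_T$ by identifying, for each product, a pair of norms composing $\Lambda^T(u)$ that combine via H\"older to the desired $L^2_TL^2_x$ norm. The strategy is a standard application of the fractional Leibniz rule combined with the mixed-norm Fubini exchange $L^2_TL^2_x=L^2_xL^2_T$, which is what makes the smoothing-effect norm $\|D^s_x\partial_xu\|_{L^4_xL^2_T}$ and the maximal-function norm $\|u\|_{L^4_xL^\infty_T}$ available at the same time.

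For the $L^2$-piece I would apply H\"older in $t$ and crude embedding in $x$:
\[
\|u\partial_xu\|_{L^2_TL^2_x}\le \|u\|_{L^\infty_TL^2_x}\,\|\partial_xu\|_{L^2_TL^\infty_x}\le CT^{1/4}\max_{[-T,T]}\|u\|_{s,2}\,\|\partial_xu\|_{L^4_TL^\infty_x}\le CT^{1/4}\Lambda^T(u)^2,
\]
and for the $D^s$-piece I would use the Leibniz-type decomposition
\[
D^s(u\partial_xu)=u\,D^s\partial_xu+(\partial_xu)\,D^su+R,
\]
with $R$ the commutator. The first term is controlled by exchanging the order of integration and applying H\"older in $x$:
\[
\|u\,D^s\partial_xu\|_{L^2_xL^2_T}\le \|u\|_{L^4_xL^\infty_T}\,\|D^s\partial_xu\|_{L^4_xL^2_T}\le \Lambda^T(u)^2,
\]
since both factors sit inside $\Lambda^T$. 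The second term is estimated by H\"older in $t$:
\[
\|(\partial_xu)D^su\|_{L^2_TL^2_x}\le \|\partial_xu\|_{L^4_TL^\infty_x}\|D^su\|_{L^4_TL^2_x}\le CT^{1/4}\Lambda^T(u)^2.
\]
For the commutator $R$, I would invoke Lemma \ref{A12}(i) pointwise in $t$ with $f=u$, $g=\partial_xu$ and $p=2$, obtaining $\|R(\cdot,t)\|_{L^2_x}\le C\|\partial_xu(\cdot,t)\|_{L^\infty_x}\|D^su(\cdot,t)\|_{L^2_x}$, and then take $L^2_T$ followed by H\"older as above to reach the same bound $CT^{1/4}\Lambda^T(u)^2$. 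Combining the three terms (and recalling $T\le1$) yields the claim.

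The main obstacle is making the commutator terms land on norms actually appearing in $\Lambda^T$: the crucial move is the Fubini exchange that converts $\|u\,D^s\partial_xu\|_{L^2_TL^2_x}$ into an $L^2_x$-based quantity, so that the maximal-function-type norm $\|u\|_{L^4_xL^\infty_T}$ can be paired with the smoothing-effect norm $\|D^s\partial_xu\|_{L^4_xL^2_T}$. A secondary issue is that Lemma \ref{A12}(i) is stated for $s\in(0,1)$; for larger $s$ one would write $D^s\partial_x u$ in terms of $\partial_x^k$ and a fractional piece of order less than one (using that the Hilbert transform and Riesz potentials commute with $\partial_x$ and are bounded on $L^2$), distribute the integer derivatives by the usual product rule, and estimate the remaining fractional factor by the same pointwise Leibniz argument, invoking the interpolation inequality \eqref{interpp} only if necessary to bring every norm back into $\Lambda^T$.
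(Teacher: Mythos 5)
The paper does not actually prove this lemma --- it simply cites Lemma 3.3 of \cite{KAWA} --- so there is no in-text argument to compare against; what you have written is the standard proof, and for $s\in(1/4,1)$ it is complete and correct. The three estimates for the $D^s$-piece are exactly the right pairings: the top-order term $u\,D^s\partial_xu$ via the Fubini exchange and $\|u\|_{L^4_xL^\infty_T}\|D^s\partial_xu\|_{L^4_xL^2_T}$, and the remaining terms via $\|\partial_xu\|_{L^4_TL^\infty_x}\|D^su\|_{L^4_TL^2_x}\lesssim T^{1/4}\Lambda^T(u)^2$; the $L^2$-piece is likewise fine.

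The only soft spot is your last sentence handling $s\ge 1$. Writing $D^s=D^{\sigma}\partial_x^m$ (up to Hilbert transforms) and distributing $\partial_x^m$ over $u\partial_xu$ produces, from the $k=0$ term, $D^{\sigma}\bigl(u\,\partial_x^{m+1}u\bigr)$; applying the pointwise Leibniz/commutator bound of Lemma \ref{A12}(i) to this product leaves you with remainders of the form $\|\partial_x^{m+1}u\|_{L^\infty_x}\|D^{\sigma}u\|_{L^2_x}$ or $\|u\|_{L^\infty_x}\|D^{\sigma+m+1}u\|_{L^2_x}=\|u\|_{L^\infty_x}\|D^{s+1}u\|_{L^2_x}$, neither of which is controlled by $\max_{[-T,T]}\|u\|_{s,2}$ (and the extra derivative cannot be recovered from $\|D^{s+2}u\|_{L^\infty_xL^2_T}$, which lives in the wrong mixed norm). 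The clean fix is not to split $s$ at all: use the Kato--Ponce commutator estimate valid for $s\ge 1$,
\begin{equation*}
\|D^s(fg)-fD^sg\|_{L^2}\le C\bigl(\|\partial_xf\|_{L^\infty}\|D^{s-1}g\|_{L^2}+\|D^sf\|_{L^2}\|g\|_{L^\infty}\bigr),
\end{equation*}
with $f=u$, $g=\partial_xu$ (this is the unweighted case of Lemma \ref{commuter}, up to the choice of exponents, or Lemma X1 in \cite{KPV}). Since $\|D^{s-1}\partial_xu\|_{L^2}\simeq\|D^su\|_{L^2}$, both commutator terms reduce to $\|\partial_xu\|_{L^\infty_x}\|D^su\|_{L^2_x}$, which is exactly the quantity you already know how to bound by $CT^{1/4}\Lambda^T(u)^2$, and the remaining term $u\,D^s\partial_xu$ is handled by your maximal-function/smoothing pairing as before. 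With that substitution the proof goes through for all $s>1/4$.
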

\begin{proof}
	 See Lemma 3.3 in \cite{KAWA}.
\end{proof}

Note that the phase function $\Phi(x)=-\gamma x^5+\beta x^3$  is in the scope of Theorem \ref{linearteo}. Hence, we are in a position to prove Theorem \ref{persistkawa}.

\begin{proof}[Proof of Theorem \ref{persistkawa}]
	Set $\lambda_6^T(w):=\max\limits_{[-T,T]}\||x|^bw\|_{L^2_x}$ and consider the space $$Y_M^T:=\{w\in C([-T,T];Z_{s,b}) \mid \Omega^T(w)\le M \},\ \ \mbox{where}\ \ \Omega^T(w)=\Lambda^T(w)+\lambda_6^T(w).$$
	To see that $\Psi$ maps $Y_M^T$ into itself we need to estimate it in the norm $\lambda_6^T$. For any $u\in X_M^T$, using \eqref{trocas} and H\"older's inequality we get
	\begin{align}\label{messi1}
		\||x|^b\Psi(u)\|_{L^2_x}&\le C\left\{(1+T)\|u_0\|_{s,2}+\||x|^bu_0\|_{L^2_x}+T^{1/2}(1+T)\|u\partial_x u\|_{L^2_TH^s_x}\nonumber\right.\\&\left.\hspace{63mm}+T^{1/2}\||x|^b u \partial_x u\|_{L^2_TL^2_x}  \right\}.
	\end{align}
	According to Lemma \ref{lemonkawa} we have $\|u\partial_x^2 u\|_{L^2_TH^s_x}\le C\Lambda^T(u)^2$. Besides, using H\"older's inequality we obtain
	\begin{equation}\label{messi2}
		\||x|^bu\partial_x u\|_{L^2_TL^2_x}\le \max_{[-T,T]}\||x|^bu\|_{L^2_x}\|\partial_xu\|_{L^2_TL^\infty_x}\le T^{1/4}\lambda_6^T(u)\|\partial_x u\|_{L^4_TL^\infty_x}\leq T^{1/4}\Omega^T(u)^2.
	\end{equation}
	Hence, from \eqref{messi1} and \eqref{messi2} we conclude
	\begin{equation}\label{messi3}
		\lambda_6^T(u)\le C\left\{(1+T)\|u_0\|_{s,2}+\||x|^bu_0\|_{L^2_x}+(1+T)(T^{3/4}+T^{1/2})\Omega^T(u)^2  \right\}.
	\end{equation}
	Finally, by combining \eqref{messi0} and \eqref{messi3} we obtain
	$$
	\Omega^T(\Psi(u))\le C\left\{(1+T)\|u_0\|_{s,2}+\||x|^bu_0\|_{L^2_x}+(1+T)(T^{3/4}+T^{1/2})\Omega^T(u)^2 \right\}.
	$$
	By taking $M=2C\left\{2\|u_0\|_{s,2}+\||x|^bu_0\|_{L^2_x}\right\}$ and $0<T<1$ such that $$C(1+T)(T^{3/4}+T^{1/2})M<\frac{1}{2},$$ we infer that $\Psi:Y_M^T\to Y^T_M$ is well defined. The rest of the proof runs from standard arguments.
\end{proof}

\subsection{The fifth-order equation}
Our goal here is to prove Theorem \ref{persistkk}.
For positive constants $T$ and $M$ consider the space $$X_M^T:=\{w\in C([-T,T];H^s(\R))\mid \Lambda^T(w)\le M\},$$
where
$$
\Lambda^T(w):=\max_{[-T,T]}\|w\|_{s,2}+\|\partial_x^2 w\|_{L^4_TL^\infty_x}+\|w\|_{L^2_xL^\infty_T}+  \|D^{s+2}w\|_{L^\infty_xL^2_T}.
$$
Let $W(t)$ be as in \eqref{necesidade}. For  $s\ge5/4$, and suitable choices of $T$ and $M$,  in \cite{KK} the authors showed that the integral equation
\begin{equation*}
	\Psi(u)(t)=W(t)u_0+\alpha\int_{0}^{t}W(t-t')(u\partial_x^2 u)(t')dt'
\end{equation*} 
maps $X_M^T$ into itself, is a contraction and satisfies
\begin{equation*}
	\Lambda^T(\Psi(u))\leq C\|u_0\|_{s,2}+CT^{1/2}\Lambda^T(u)^2,
\end{equation*}
for some $C>0$ and any $u\in X_M^T$.

Moreover, they showed the following lemma:
\begin{lemma}\label{lemon}
	Let $0\le T<1$ and $s\ge5/4$. If $\Lambda^T(u)<\infty$ then $u\partial_x^2 u \in L^2([-T,T];H^s(\R))$ and $$\left(\int_{-T}^T\left\|(u\partial_x^2 u)(t')\right\|_{s,2}^2dt'\right)^{1/2}\le C\Lambda^T(u)^2,$$ where $C>0$ depends only on $\alpha, \beta$,  $\gamma$, and s.
\end{lemma}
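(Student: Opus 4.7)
The plan is to prove the bound by splitting the $H^s$-norm via the equivalence $\|v\|_{s,2}\simeq \|v\|_{L^2}+\|D^sv\|_{L^2}$, so that it is enough to estimate $\|u\partial_x^2 u\|_{L^2_TL^2_x}$ and $\|D^s(u\partial_x^2 u)\|_{L^2_TL^2_x}$ separately, and in both cases produce $\Lambda^T(u)^2$ (times an $O(1)$ constant in $T\le1$) by matching the four norms in $\Lambda^T$.

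For the first piece I would apply H\"older in $t$:
\[
\|u\partial_x^2 u\|_{L^2_TL^2_x}\le \|u\|_{L^\infty_T L^2_x}\|\partial_x^2 u\|_{L^2_TL^\infty_x}\le T^{1/4}\|u\|_{L^\infty_TH^s}\|\partial_x^2u\|_{L^4_TL^\infty_x}\le T^{1/4}\Lambda^T(u)^2,
\]
using $\|u\|_{L^\infty_TL^2_x}\le\|u\|_{L^\infty_TH^s}$ and H\"older $L^2_T\hookrightarrow T^{1/4}L^4_T$. This is immediate from the definition of $\Lambda^T$.

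For the second piece I would use a fractional Leibniz rule (combining parts of Lemma~\ref{A12} with Kato--Ponce) to decompose
\[
D^s(u\,\partial_x^2u)=u\,D^{s+2}u+(D^su)\,\partial_x^2u+R,
\]
where the remainder $R$ satisfies a pointwise bound of the same form as the second term. The crucial estimate is the Kato-type pairing for the top-order piece: writing out the $L^2_{xt}$ norm and applying Cauchy--Schwarz in $t$ first, then H\"older in $x$, yields
\[
\|u\,D^{s+2}u\|_{L^2_TL^2_x}\le \|u\|_{L^2_xL^\infty_T}\,\|D^{s+2}u\|_{L^\infty_xL^2_T}\le \Lambda^T(u)^2,
\]
which is the reason the two smoothing norms appear in $\Lambda^T$. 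The remaining term is estimated as
\[
\|(D^su)\,\partial_x^2u\|_{L^2_TL^2_x}\le \|D^su\|_{L^\infty_TL^2_x}\,\|\partial_x^2u\|_{L^2_TL^\infty_x}\le T^{1/4}\Lambda^T(u)^2,
\]
and the remainder $R$ is bounded in the same way. Combining these with $T<1$ gives the stated estimate with a constant $C$ depending on $\alpha,\beta,\gamma,s$.

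The main obstacle is justifying the Leibniz-type decomposition for $D^s$ when $s\ge 5/4>1$, since the pointwise identity in Lemma~\ref{A12}(i) is formulated for $s\in(0,1)$. I would circumvent this by writing $D^s=\mathcal{H}\partial_x D^{s-1}$ with $s-1\in[1/4,3)$, first using the ordinary product rule to expand $\partial_x(u\,\partial_x^2u)=\partial_xu\,\partial_x^2u+u\,\partial_x^3u$, and then applying the fractional Leibniz/Kato--Ponce commutator estimate (Lemma~\ref{A12}(ii) or Lemma~\ref{commuter} with trivial weights) to $D^{s-1}$ of each resulting product. One then uses the boundedness of $\mathcal{H}$ on $L^2$ to close the argument. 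For $s-1\ge 1$ the procedure is iterated. The exponent bookkeeping still produces the same two dominant pairings $u\,D^{s+2}u$ and $(D^su)\partial_x^2u$ (plus harmless lower-order terms estimable by H\"older and Sobolev embedding, which is where $s\ge 5/4>1/2$ enters), so the final bound $\|u\partial_x^2u\|_{L^2_TH^s}\le C\Lambda^T(u)^2$ follows.
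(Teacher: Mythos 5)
The paper does not actually prove this lemma: it is quoted from the reference \cite{KK} (``See Lemma 3.2 in \cite{KK}''), so there is no in-paper argument to compare against and you are in effect reproving a cited result. The routine parts of your proposal are correct: the bound $\|u\partial_x^2u\|_{L^2_TL^2_x}\le T^{1/4}\Lambda^T(u)^2$, the estimate $\|(D^su)\,\partial_x^2u\|_{L^2_{xT}}\le T^{1/4}\Lambda^T(u)^2$, and above all the top-order pairing $\|u\,D^{s+2}u\|_{L^2_{xT}}\le\|u\|_{L^2_xL^\infty_T}\|D^{s+2}u\|_{L^\infty_xL^2_T}$, obtained by H\"older in $t$ pointwise in $x$ and then H\"older in $x$; this is indeed why those two norms appear in $\Lambda^T$.

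The gap is the remainder $R$. You assert it ``satisfies a pointwise bound of the same form as the second term,'' i.e.\ is controlled by $\|D^su\|_{L^\infty_TL^2_x}\|\partial_x^2u\|_{L^2_TL^\infty_x}$, and that the iterated decomposition via $D^s=\mathcal{H}\partial_xD^{s-1}$ leaves only ``harmless lower-order terms.'' Neither claim is justified, and this is where the whole difficulty of the lemma sits. An estimate of the form $\|D^\sigma(fg)-fD^\sigma g-gD^\sigma f\|_{L^2}\le C\|g\|_{L^\infty}\|D^\sigma f\|_{L^2}$ (Lemma \ref{A12}(i)) is genuinely restricted to $\sigma\in(0,1)$: already at $\sigma=2$ the left side equals $2\|\partial_xf\,\partial_xg\|_{L^2}$, which is not bounded by $\|g\|_\infty\|\partial_x^2f\|_{L^2}$. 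Every Kato--Ponce type expansion valid for $s\ge1$ (e.g.\ Lemma \ref{commuter} with trivial weights, or Lemma \ref{A12}(ii) applied after your factorization $\partial_x(u\partial_x^2u)=\partial_xu\,\partial_x^2u+u\,\partial_x^3u$) produces \emph{intermediate} distributions of the $s+2$ derivatives, such as $\partial_xu\cdot D^{s+1}u$ or $\partial_x^3u\cdot D^{s-1}u$, and these are not lower order and not harmless. First, for $5/4\le s\le 3/2$ the factor $\|\partial_xu\|_{L^\infty_x}$ is not controlled by $\|u\|_{H^s}$. Second, and more structurally, no norm $\|D^{s+\epsilon}u\|_{L^p_xL^q_T}$ with $\epsilon>0$ and $p<\infty$ is reachable from $\Lambda^T$: the only control beyond $s$ derivatives is the smoothing norm $\|D^{s+2}u\|_{L^\infty_xL^2_T}$, which is $L^\infty$ in $x$, and Minkowski's inequality converts $L^q_TL^p_x$ into $L^p_xL^q_T$ only in the direction that is useless here; interpolation with $\|D^su\|_{L^\infty_TL^2_x}$ does not produce a fixed-time or $L^p_x$-based bound on $D^{s+1}u$ either. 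So the ``exponent bookkeeping'' you defer is precisely the step that does not close as described: a correct proof must arrange the decomposition so that every factor carrying more than $s$ derivatives appears exactly as $D^s\partial_x^2u$ paired against an $L^2_xL^\infty_T$ factor, and must estimate the resulting commutator by a mixed-norm commutator inequality (in the spirit of Lemma \ref{A12}(iii), but with $L^2_{xT}$ output) rather than by fixed-time Leibniz rules. As written, your argument does not supply that step.
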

\begin{proof}
	See Lemma 3.2 in \cite{KK}.
\end{proof}

\begin{proof}[Proof of Theorem \ref{persistkk}]
	The proof follows by setting $\lambda_5^T(w):=\max\limits_{[-T,T]}\||x|^bw\|_{L^2_x}$ and arguing as in the proof of Theorem \ref{persistkawa}.
\end{proof}

\section{Dispersive blow up} \label{sec4}

In this section we use the local theory developed above to study dispersive blow up properties regarding the Kawahara equation and the Hirota-Satsuma system. 

\subsection{The Kawahara Equation}\label{KEsec}
We prove Theorem \ref{dispkawa} in the following two steps. We first build an initial data satisfying the conditions listed in Proposition \ref{step1kawa} (bellow) and then we prove a nonlinear smoothing effect that reduces the regularity properties of the solution to the linear term.  

\subsubsection{Construction of the initial data}
Let $f:\R\to \R$ be defined by $f(x):=e^{-2|x|}$. Set $\phi(x):=(f*f)(x)=\frac{1}{2}e^{-2|x|}(1+2|x|)$. It is not difficult to see that $\phi\in {H^{7/2}}^{-}(\R)\cap L^2({\langle x\rangle^{7/4}}^-dx)$, $\phi\in C^3(\R\setminus\{0\})\setminus C^3(\R)$,  $e^x\phi\in L^2(\R)$, and $e^{-x}\phi\in L^2(\R)$.

Assume for the moment that $u_0$ has the form
\begin{equation}\label{dadokawa}
	u_0(x):=\sum_{j=1}^{\infty}\alpha_j W(-\sigma j)\phi(x),
\end{equation}
where $W(t)$ is the unitary group defined in \eqref{necesidade}, $\sigma>0$ is fixed and $\alpha_j$ will be defined later. 

\begin{proposition}\label{step1kawa}
	Assume $\gamma<0$ and $3\beta+10\gamma>0$. For any $\sigma>0$ there exists a sequence $\{\alpha_j\}$ such that the function $u_0$ in \eqref{dadokawa} belongs to $C^\infty(\R)\cap{H^{7/2}}^-(\R) \cap L^2({\langle x\rangle^{7/4}}^-dx)$. In addition, the associated global in-time solution $u\in C(\R;{H^{7/2}}^-(\R))$ of the linear part of the IVP \eqref{kawaivp} satisfies
	\begin{itemize}
		\item[(i)] For any $t>0$ with $t\notin \sigma\Z^+$ we have $u(\cdot,t)\in C^\infty(\R).$
		\item[(ii)] For any $t\in\sigma\Z^+$ we have $u(\cdot,t)\in C^3(\R\setminus\{0\})\setminus C^3(\R)$. 
	\end{itemize}
\end{proposition}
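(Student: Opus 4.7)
The plan is to follow the construction of \cite{2017} (adapted to higher-order dispersive phases as in \cite{Ademir}, \cite{palacios}): define $u_0$ as a geometrically decaying sum of backward linear translates of $\phi$, so that at each forward time $t=\sigma k$ exactly one summand equals $\alpha_k\phi$ (which carries the $C^3$-obstruction at the origin), while the remaining tail is smoothed out by dispersion. The heart of the argument is the following \emph{smoothing lemma}: for every $t\neq 0$, $W(t)\phi\in C^\infty(\R)$ with bounds $\|\partial_x^n W(t)\phi\|_{L^\infty(\R)}\leq C_n(1+|t|^{-k_n})$. Since $\phi(x)=\tfrac{1}{2}e^{-2|x|}(1+2|x|)$, one computes $\hat\phi(\xi)=16/(4+\xi^2)^2$, which is holomorphic on the strip $|\mathrm{Im}\,\xi|<2$ and decays like $|\xi|^{-4}$ along every horizontal line. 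To prove the lemma I would, for $t>0$, shift the contour from $\R$ to $\R+i\delta$ (with a small $\delta\in(0,2)$) in
\[
\partial_x^n W(t)\phi(x)=\frac{1}{2\pi}\int_\R(i\xi)^n e^{i(x\xi+t\Phi(\xi))}\hat\phi(\xi)\,d\xi,\qquad \Phi(\xi)=-\gamma\xi^5+\beta\xi^3.
\]
A direct computation gives
\[
\mathrm{Im}\,\Phi(\eta+i\delta)=\delta\bigl[-5\gamma\eta^4+(10\gamma\delta^2+3\beta)\eta^2-\gamma\delta^4-\beta\delta^2\bigr],
\]
and the structural hypotheses $\gamma<0$ and $3\beta+10\gamma>0$ are exactly what makes the leading coefficient $-5\gamma$ positive and $\mathrm{Im}\,\Phi(\eta+i\delta)$ dominate the lower order terms in the right way, producing the decay factor $e^{-t\,\mathrm{Im}\,\Phi(\eta+i\delta)}$ and making the shifted integral absolutely convergent for every $n$. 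One shifts to $\R-i\delta$ when $t<0$.

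With the smoothing lemma in hand, choose a rapidly decaying sequence $\alpha_j>0$ (for instance $\alpha_j=2^{-j}$) and set $u_0=\sum_{j\geq 1}\alpha_j W(-\sigma j)\phi$. The Kawahara phase fits Theorem \ref{linearteo} with $K=4$, and the target indices $s<7/2$ close to $7/2$ and $b<7/8$ close to $7/8$ satisfy $b\leq s/4$, hence
\[
\||x|^{b} W(-\sigma j)\phi\|_{L^2}\leq C(1+\sigma j)\|\phi\|_{s,2}+C\||x|^{b}\phi\|_{L^2},
\]
while unitarity of $W$ gives $\|W(-\sigma j)\phi\|_{H^{s}}=\|\phi\|_{H^{s}}$. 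The geometric decay of $\alpha_j$ absorbs the $1+\sigma j$ factor, and by the smoothing lemma every $C^n$-norm of $W(-\sigma j)\phi$ grows at most polynomially in $j$; hence $u_0\in C^\infty(\R)\cap {H^{7/2}}^-(\R)\cap L^2({\langle x\rangle^{7/4}}^{-}dx)$.

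For the linear evolution $u(\cdot,t)=W(t)u_0=\sum_{j\geq 1}\alpha_j W(t-\sigma j)\phi$: if $t\notin\sigma\Z^+$ then $t-\sigma j\neq 0$ for every $j\geq 1$, so by the smoothing lemma every summand is $C^\infty$ and the series converges in $C^n(\R)$ for each $n$, proving (i). If $t=\sigma k$ for some $k\in\Z^+$, then
\[
u(\cdot,\sigma k)=\alpha_k\phi+\sum_{j\neq k}\alpha_j W((k-j)\sigma)\phi,
\]
and the tail is $C^\infty(\R)$ by the same argument. A direct derivative calculation on $x>0$, namely $\phi'(x)=-2xe^{-2x}$, $\phi''(x)=2e^{-2x}(2x-1)$, $\phi'''(x)=e^{-2x}(8-8x)$, combined with the evenness of $\phi$, shows $\phi\in C^2(\R)\cap C^\infty(\R\setminus\{0\})$ with $\phi'''(0^+)=8=-\phi'''(0^-)$, so $\phi\notin C^3(\R)$; this gives (ii). The principal obstacle is the smoothing lemma: the quintic phase $\Phi$ forces a careful contour deformation inside the strip of holomorphy of $\hat\phi$, and verifying that the hypotheses $\gamma<0$ and $3\beta+10\gamma>0$ suffice to produce the right sign and asymptotic behaviour of $\mathrm{Im}\,\Phi(\eta+i\delta)$ for an admissible choice of $\delta$ is the one place where genuine work beyond the KdV template of \cite{2017} is required.
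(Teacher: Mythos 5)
Your overall architecture coincides with the paper's (sum of backward translates of $\phi=e^{-2|\cdot|}*e^{-2|\cdot|}$, one singular summand surviving at each $t=\sigma k$, the tail smoothed by dispersion; the $C^{3}$-obstruction computation $\phi'''(0^{+})=8=-\phi'''(0^{-})$ and the memberships in ${H^{7/2}}^{-}\cap L^{2}({\langle x\rangle^{7/4}}^{-}dx)$ via Theorem \ref{linearteo} are exactly as in the text). Where you genuinely diverge is in the key smoothing lemma. The paper works in physical space: it conjugates the Kawahara flow by $e^{\pm x}$, observes that the conjugated evolution factors as $W(t)$ composed with heat-type semigroups $e^{-5\gamma t\partial_x^4}$, $e^{-(3\beta+10\gamma)t\partial_x^2}$, etc., and reads off Gaussian decay of the multiplier $e^{(3\beta+10\gamma)t\xi^2}$ (this is where $3\beta+10\gamma>0$ enters), yielding $\|\partial_x^m(e^{\mp x}W(t)\phi)\|_{L^2}\le c_m e^{\mp(\beta+\gamma)t}((3\beta+10\gamma)|t|)^{-m/2}$ and then smoothness by Sobolev embedding. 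Your contour shift $\R\to\R\pm i\delta$ inside the strip of holomorphy of $\widehat{\phi}$ is the Fourier-side avatar of the same conjugation ($e^{\mp\delta x}$ corresponds to shifting by $\pm i\delta$), and your formula for $\mathrm{Im}\,\Phi(\eta+i\delta)$ is correct. One thing your route buys: the dominant term $-5\gamma\eta^{4}$ already forces $\mathrm{Im}\,\Phi(\eta+i\delta)\gtrsim\eta^{4}-C$ once $\gamma<0$, so the sign of $10\gamma\delta^{2}+3\beta$ is irrelevant for large $\eta$; the hypothesis $3\beta+10\gamma>0$ is not actually needed for your version of the lemma (the paper needs it only because it chooses to exploit the $\xi^{2}$ factor rather than the $\xi^{4}$ one).

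Three points in your write-up need repair before the argument closes. First, the integral $\int(i\xi)^{n}e^{i(x\xi+t\Phi(\xi))}\widehat{\phi}(\xi)\,d\xi$ is not absolutely convergent for $n\ge3$ (since $\widehat{\phi}(\xi)\sim\xi^{-4}$), so you cannot ``shift the contour'' in it directly; shift first for $n=0$, where the integrand is $O(|\xi|^{-4})$ and the vertical sides vanish, and only then differentiate the shifted representation under the integral sign (legitimate thanks to the factor $e^{-ct\delta\eta^{4}}$). Second, the shifted representation carries the prefactor $e^{\mp\delta x}$, which is unbounded on a half-line; what the argument actually proves is $\|\partial_x^{n}(e^{\pm\delta x}W(t)\phi)\|_{L^\infty}\le\cdots$, not the stated $\|\partial_x^{n}W(t)\phi\|_{L^\infty(\R)}\le C_n(1+|t|^{-k_n})$. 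This still gives $W(t)\phi\in C^{\infty}(\R)$ (multiply back by the smooth function $e^{\mp\delta x}$), which is all you need, but the lemma should be restated in the weighted form — exactly as the paper does with $e^{\mp x}W(t)\phi$. Third, the constant term $-\gamma\delta^{4}-\beta\delta^{2}$ in $\mathrm{Im}\,\Phi(\eta+i\delta)$ produces a factor $e^{C\delta|t|}$ in your bounds (the analogue of the paper's $e^{\mp(\beta+\gamma)t}$), so the $C^{n}$-norms of $e^{\pm\delta x}W(-\sigma j)\phi$ grow \emph{exponentially} in $j$, not polynomially as you assert; consequently $\alpha_j=2^{-j}$ is not obviously sufficient, whereas the paper's choice $\alpha_j=e^{-j^{2}}$ absorbs any exponential factor and should be adopted. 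With these corrections your proof is a valid, essentially equivalent alternative to the paper's.
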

\begin{proof}
	The proof is based on Section 3 of \cite{2017}. For the sake of completeness we carry on the details here.
	We first prove $u_0\in C^\infty(\R)$. For that it suffices to show that $e^{-x}u_0\in C^\infty(\R)$. Thus, in view of Sobolev's embedding, it suffices to prove that $\partial_{x}^m(e^{-x}u_0)\in L^2(\R)$, for any $m\in \Z^+$.  To prove this, let us consider the IVP
	\begin{equation*}
		\begin{cases}
			\partial_t w + Lw =0, \quad t<0,\\
			w(x,0)=e^{-x}\phi,
		\end{cases}
	\end{equation*}
	where
	$$Lw=\gamma\partial_x^5 w +5\gamma\partial_x^4 w + (\beta +10\gamma)\partial_x^3 w +(3\beta+10\gamma)\partial_x^2 w + (3\beta+5\gamma)\partial_x w +(\beta+\gamma)w.$$
For one hand, since $\partial_t(e^{x}w)+\beta\partial_x^3 (e^{x}w)+\gamma\partial_x^5(e^{x}w)=0$ and $e^{x}w(x,0)=\phi(x)$ we deduce that $W(t)\phi(x)=e^{x}w(x,t)$. On the other hand, it is easy to see that the solution of the above IVP is
$$
w(x,t)=W(t)e^{-5\gamma t \partial_x^4}e^{-10\gamma t\partial_x^3}e^{-(3\beta+10\gamma)t\partial_x^2}\left(e^{-x+(2\beta+4\gamma)t}\phi(x-(3\beta+5\gamma)t) \right).
$$
Hence,
$$
e^{-x}W(t)\phi(x)=W(t)e^{-5\gamma t \partial_x^4}e^{-10\gamma t\partial_x^3}e^{-(3\beta+10\gamma)t\partial_x^2}\left(e^{-x+(2\beta+4\gamma)t}\phi(x-(3\beta+5\gamma)t) \right).
$$
Next using Plancherel's theorem and the facts that $\gamma<0$ and $3\beta+10\gamma>0$ we deduce
\begin{equation}\label{class}
	\begin{split}
\|\partial_{x}^m(e^{-x}W(t)\phi)\|_{L^2}&\leq \|\xi^me^{(3\beta+10\gamma)t\xi^2}\|_{L^\infty}\|	e^{-x+(2\beta+4\gamma)t}\phi(x-(3\beta+5\gamma)t)\|_{L^2}\\
&	\leq \frac{c_{m} e^{-(\beta+\gamma) t}}{((3 \beta+10 \gamma) |t|)^{k / 2}},
	\end{split}
\end{equation}
where $c_m$ is a constant depending on $m$ and we have used that $e^{-x}\phi\in L^2(\R)$.

Inequality \eqref{class} now yields
\[
\begin{split}
\|\partial_{x}^m(e^{-x}u_0)\|_{L^2}&\leq \sum_{j=1}^\infty\alpha_j\|\partial_{x}^m(e^{-x}W(-\sigma j)\phi)\|_{L^2}\\
&\leq \sum_{j=1}^\infty\alpha_j\frac{c_{m} e^{-(\beta+\gamma) \sigma j}}{((3 \beta+10 \gamma) \sigma j)^{m / 2}}.
\end{split}
\]
By choosing $\alpha_j$ such that the above series converges for any $m\in\Z^+$ (for instance, take $\alpha_j:=e^{-j^2}$).  we conclude that  $u_0\in C^\infty(\R)$.

Since  $W(t)$ is bounded in $H^s(\R)$, the fact $u_0\in {H^{7/2}}^-(\R) \cap L^2({\langle x\rangle^{7/4}}^-dx)$ follows directly from inequality \eqref{trocas} and the properties of $\phi$.

Before  proving (i) and (ii), let us now consider the  IVP 
	\begin{equation*}
	\begin{cases}
		\partial_t w + Lw=0, \quad t>0,\\
	w(x,0)=e^{x}\phi,
	\end{cases}\end{equation*}
	where $$Lw=\gamma\partial_x^5 w -5\gamma\partial_x^4 w + (\beta +10\gamma)\partial_x^3 w -(3\beta+10\gamma)\partial_x^2 w + (3\beta+5\gamma)\partial_x w -(\beta+\gamma)w.$$
Here we have  $W(t)\phi(x)=e^{-x}w(x,t)$ and $w$ is given by the expression
	\begin{equation*}\label{classicccc}
	 w(x,t)=W(t)e^{5\gamma t \partial_x^4}e^{-10\gamma t\partial_x^3}e^{(3\beta+10\gamma)t\partial_x^2}\left(e^{x-(2\beta+4\gamma)t}\phi(x-(3\beta+5\gamma)t) \right).
	\end{equation*}
Thus,
$$
\partial_x^m(e^{x}W(t)\phi(x))=\partial_{x}^m W(t)e^{5\gamma t \partial_x^4}e^{-(2\beta+10\gamma) t\partial_x^3}e^{(3\beta+10\gamma)t\partial_x^2}\left(e^{x-(2\beta+4\gamma)t}\phi(x-(3\beta+5\gamma)t) \right)
$$
with
\begin{equation}\label{class1}
	\begin{split}
\|\partial_x^m(e^{x}W(t)\phi)\|_{L^2}&\leq \|\xi^me^{-(3\beta+10\gamma)t\xi^2}\|_{L^\infty}\|	e^{x-(2\beta+4\gamma)t}\phi(x-(3\beta+5\gamma)t)\|_{L^2}\\
&	\leq \frac{c_{m} e^{(\beta+\gamma) t}}{((3 \beta+10 \gamma) t)^{m / 2}},	
	\end{split}
\end{equation}
where we used that $e^{x}\phi\in L^2(\R)$.

	We now establish conditions (i) and (ii). 
	To see that (i) holds, assume $t>0$ is so that $t\notin \sigma \Z^+$. As before, it is enough to prove $e^{-x}W(t)u_0\in H^m(\R)$ for all $m\in\Z^+$. From \eqref{class}, \eqref{class1} and the fact that $\phi$ is symmetric,  we get
	\begin{equation}
		\begin{split}
	 \|\partial_x^m\left( e^{-x}W(t)u_0\right)\|_{L^2}
	 &\le \sum_{j=1}^{\infty} \alpha_j \left\|\partial_x^m \left(e^{-x} W(t-\sigma j)\phi\right)\right\|_{L^2}\\
	 &\le c_m \sum_{j=1}^{\infty}\alpha_j \frac{e^{(\beta+\gamma)|t-\sigma j|}}{((3\beta+10\gamma)|t-\sigma j|)^{m/2}}. \label{rightmost}
	\end{split}
 	\end{equation}
By our choice of $\alpha_j$, the rightmost series in \eqref{rightmost} is finite for all $m\in \mathbb{Z}^+$.

	Finally, to prove (ii), assume $t=\sigma n$, for some $n\in\Z^+$. We have
	$$W(t)u_0=\alpha_n \phi + \sum_{\substack {j=1\\ j\neq n}}^{\infty}\alpha_jW(\sigma(n- j))\phi.$$ Using the above arguments, we may show that the series belongs to $C^\infty(\R)$. 	The conclusion then follows because $\phi\in C^3(\R\setminus\{0\})\setminus C^3(\R)$.
\end{proof}

\subsubsection{Nonlinear smoothing}
The goal of this section is to prove that the integral term in the Duhamel formulation of the solution of \eqref{kawaivp} is more regular than the solution of the corresponding linear equation.

We begin by recalling some useful inequalities.

\begin{lemma}
Assume $T\in (0,1)$ and let $W(t)$ be as in \eqref{necesidade}. 
\begin{itemize}
	\item[(i)] For any $\varphi\in L^2(\R)$,
\begin{equation}\label{st1}
			\|D^2W(t)\varphi\|_{L^\infty_xL^2_T}\le C\|\varphi\|_{L^2_x}.
\end{equation}
\item[(ii)] If $f \in L^1_TL^2_x$ then
\begin{equation}\label{dualkawa}
	\sup_{[0,T]}\left\| D^2\int_{0}^{t}W(t-t')f(\cdot,t')dt' \right\|_{L^2_x} \le C \|f\|_{L^1_xL^2_T}. 
\end{equation}
\item[(iii)] For any $\theta\in(0,1)$, $-1<\alpha\le \frac{3}{2}$ and $\varphi\in L^2(\R)$,
\begin{equation}\label{str2.4}
	\|D^{\frac{\theta \alpha}{2}}W(t)\varphi\|_{L^q_T L^p_x}\le C \|\varphi\|_{L^2_x},
\end{equation}
where $p=2/(1-\theta)$ and $q=10/\theta (\alpha +1)$.
\end{itemize}
\end{lemma}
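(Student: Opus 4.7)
The plan is to prove the three estimates following the classical Kenig--Ponce--Vega scheme adapted to the Kawahara phase $\Phi(\xi) = -\gamma\xi^5 + \beta\xi^3$. Its derivative $\Phi'(\xi) = -5\gamma\xi^4 + 3\beta\xi^2$ satisfies $|\Phi'(\xi)| \sim |\xi|^4$ as $|\xi|\to\infty$, which is what powers the Kato smoothing gain of $(5-1)/2 = 2$ derivatives, while the zeros of $\Phi'$ lie in a bounded region of the frequency axis and must be treated separately.

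For part (i) I would fix $x \in \R$ and apply Plancherel in the $t$ variable to
$$D^2 W(t)\varphi(x) = \int_{\R} e^{ix\xi} e^{it\Phi(\xi)} |\xi|^2 \widehat{\varphi}(\xi)\, d\xi.$$
After decomposing the frequency axis into intervals of monotonicity of $\Phi$, on each such interval the change of variables $\eta = \Phi(\xi)$ produces a Jacobian $1/|\Phi'(\xi)|$ that combines with the $|\xi|^4$ coming from $D^2$ into a uniformly bounded factor at high frequency. The contribution from a bounded neighborhood of the zeros of $\Phi'$ is handled by Bernstein's inequality together with the unitarity of $W(t)$ on $L^2$, producing the required bound uniformly in $x$.

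Part (ii) is obtained from part (i) by a $TT^*$/duality argument. Part (i) says that $S\colon \varphi \mapsto D^2 W(\cdot)\varphi$ is bounded $L^2_x \to L^\infty_x L^2_T$; its adjoint
$$S^* g = \int_{\R} W(-t') D^2 g(\cdot, t')\, dt'$$
is therefore bounded $L^1_x L^2_T \to L^2_x$. Using the identity
$$D^2 \int_0^t W(t-t') f(\cdot, t')\, dt' = W(t)\int_0^t W(-t') D^2 f(\cdot, t')\, dt'$$
together with the fact that $W(t)$ is unitary on $L^2_x$, the conclusion follows by applying the $S^*$ bound to the time-truncation $\chi_{[0,T]} f$.

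For part (iii) I would use a Stein--Tomas-type interpolation between two endpoints. At $\theta = 0$ the estimate reduces to the trivial energy identity $\|W(t)\varphi\|_{L^\infty_T L^2_x} = \|\varphi\|_{L^2_x}$. At the other endpoint, a stationary phase argument based on $\Phi''(\xi) = -20\gamma\xi^3 + 6\beta\xi$ yields an $L^1_x \to L^\infty_x$ dispersive decay of order $|t|^{-1/4}$ with a derivative loss quantified by the parameter $\alpha$. Complex interpolation between these two bounds gives the claimed exponents $p = 2/(1-\theta)$, $q = 10/(\theta(\alpha+1))$, and smoothing $\theta\alpha/2$. The main obstacle throughout is the careful frequency-localized treatment of the stationary points of $\Phi'$, namely the double zero at $\xi = 0$ and, when they are real, the simple zeros at $\xi = \pm\sqrt{3\beta/(5\gamma)}$; once these are isolated by smooth cut-offs, the remaining high-frequency contributions behave as in the classical third-order KdV analysis.
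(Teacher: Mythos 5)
The paper does not prove (i) or (iii) at all: it cites Theorems 2.6 and 2.4 of Cui--Tao \cite{KAWA} and disposes of (ii) with the one-line remark that it follows from (i) by duality. Your reconstructions of (i) and (ii) are essentially the standard arguments behind those citations and are correct. For (i), Plancherel in $t$ after the monotone change of variables $\eta=\Phi(\xi)$ turns the square of the $L^2_t$ norm into $\int |\xi|^4|\widehat{\varphi}(\xi)|^2\,d\xi/|\Phi'(\xi)|$, which is controlled at high frequency by $|\Phi'(\xi)|\gtrsim |\xi|^4$, and the bounded-frequency remainder (including the nonzero simple zeros of $\Phi'$) is handled crudely using $T<1$. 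For (ii), the only slip is that the truncation should be $\chi_{[0,t]}f$ (for each fixed $t$, with the sup taken afterwards, using $\|\chi_{[0,t]}f\|_{L^1_xL^2_T}\le\|f\|_{L^1_xL^2_T}$), not $\chi_{[0,T]}f$; the argument is otherwise the intended one.

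Part (iii) is where your sketch does not close as written. The interpolation/$TT^*$ scheme is the right architecture, but the key $L^1_x\to L^\infty_x$ input is misstated. Consistency with the claimed exponent $q=10/(\theta(\alpha+1))$, i.e.\ $2/q=\theta(\alpha+1)/5$, forces the dispersive estimate to read $\|D^{\alpha/2}W(t)\varphi\|_{L^\infty_x}\le C|t|^{-(\alpha+1)/5}\|\varphi\|_{L^1_x}$ for $-1<\alpha\le 3/2$; a fixed decay rate $|t|^{-1/4}$ is neither correct nor compatible with the stated range of $(p,q)$. Moreover, ordinary stationary phase based on $\Phi''(\xi)=-20\gamma\xi^3+6\beta\xi$ cannot deliver a uniform bound, since $\Phi''$ vanishes at $\xi=0$ (and possibly at two further real points), so the $|t\Phi''|^{-1/2}$ heuristic degenerates exactly where it is needed. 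The correct tool is van der Corput's lemma applied to the constant fifth derivative $\Phi^{(5)}=-120\gamma$ (or the dyadic oscillatory-integral analysis of Kenig--Ponce--Vega), which is how Cui--Tao obtain their Theorem 2.4. With that substitution your outline for (iii) becomes the standard proof; without it, the endpoint estimate you interpolate against is unproved.
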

\begin{proof}
	For \eqref{st1} see \cite[Theorem 2.6]{KAWA}. Estimate \eqref{dualkawa} follows from \eqref{st1} and a duality argument. For \eqref{str2.4} see \cite[Theorem 2.4]{KAWA}.
\end{proof}

With the above inequalities in hand we are able to prove the following result.

\begin{proposition}\label{kawaduh}
	Let $\frac{13}{6}<s<4$ and assume $u_0\in H^s(\R)\cap L^2(|x|^{s/2}dx)$. Let $u(t)$ be the solution of the  IVP \eqref{kawaivp} provided by Theorem \ref{persistkawa},
	\begin{equation}
		u(t)=W(t)u_0+\int_0^t W(t-t')(u\partial_x u)dt'=:W(t)u_0+\mathcal{Z}(t), \quad t\in[0,T].
	\end{equation}  
	Then, $\mathcal{Z}(t)\in H^{s+1}(\R)$ for any $t\in[0,T]$.
\end{proposition}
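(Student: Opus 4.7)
The plan is to extract two derivatives from $\mathcal Z$ via the dual Kato smoothing \eqref{dualkawa}, distributing the remaining $s-1$ fractional derivatives onto the nonlinearity $uu_x$ so that the result sits in $L^1_x L^2_T$. The weight hypothesis $u_0\in L^2(|x|^{s/2}dx)$ will furnish the required $L^1_x$-integrability through a Cauchy--Schwarz step, and Lemma \ref{commuter} together with the interpolation inequalities \eqref{interpp}--\eqref{interp2} will handle the resulting weighted $L^2$-estimates. Throughout, the mixed Strichartz-type norms bundled in $\Lambda^T(u)$ (from Theorem \ref{persistkawa}) play the role of background estimates.

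I would first write
\[
D^{s+1}\mathcal Z(t)=D^{2}\int_0^t W(t-t')D^{s-1}(uu_x)(t')\,dt'
\]
and invoke \eqref{dualkawa} to reduce the claim to the bound $\|D^{s-1}(uu_x)\|_{L^1_xL^2_T}<\infty$. Since $s>13/6>2$, the weight exponent $s/4$ given by the persistence theorem is strictly greater than $1/2$, so I can fix $a$ with $1/2<a<s/4$ and, by Cauchy--Schwarz in $x$,
\[
\|D^{s-1}(uu_x)\|_{L^1_xL^2_T}\le\|\langle x\rangle^{-a}\|_{L^2_x}\,\|\langle x\rangle^{a}D^{s-1}(uu_x)\|_{L^2_xL^2_T}.
\]

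The remaining weighted $L^2_{xT}$-norm is attacked by splitting
\[
D^{s-1}(uu_x)=u\,D^{s-1}u_x+[D^{s-1},u]u_x.
\]
The commutator is handled via the Muckenhoupt-weighted Kato--Ponce inequality (Lemma \ref{commuter}) with $\ell=2$, $p=q=4$ and $v=w=\langle x\rangle^{2a}\in A_4$ (valid because $a<3/2$), reducing it to products of weighted $L^4_x$ norms of $u$ and its fractional derivatives. The principal term $u\,D^{s-1}u_x$ is controlled via H\"older in $x$, pairing an $L^\infty_{xT}$ bound on $\langle x\rangle^{a_1}u$ (obtained from the one-dimensional Sobolev embedding $H^{1/2+}\hookrightarrow L^\infty$ together with \eqref{interp2}) against a bound on $\langle x\rangle^{a_2}D^{s-1}u_x$ extracted from $\Lambda^T(u)$; here $a=a_1+a_2$ is chosen so that every weighted Sobolev norm $\|\langle x\rangle^\alpha J^\beta u\|_{L^2}$ used sits on the admissible segment $4\alpha+\beta\le s$ prescribed by \eqref{interpp}.

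The principal obstacle is precisely this last step. The Cauchy--Schwarz reduction forces $a>1/2$, while the Hilbert transform is unbounded on $L^2(\langle x\rangle^{2a})$ once $2a\ge1$ by Lemma \ref{hilb}, so one cannot naively replace $uu_x$ by $\tfrac{1}{2}\partial_x(u^{2})$ and move a derivative across $W(t)$. Combined with the interpolation budget $4\alpha+\beta\le s$, which sharply limits how much weight and how many derivatives may sit on a single factor, the threshold $s>13/6$ is exactly what makes all constraints simultaneously feasible: it leaves room for $a\in(1/2,s/4)$ with enough slack to place $s-1$ derivatives on one factor via $\Lambda^T(u)$ while retaining a weighted $H^{1/2+}$ control on the other. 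Integrating the resulting time-uniform bound over $[0,T]$ then yields the finite $L^2_T$ factor and completes the proof.
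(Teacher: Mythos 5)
Your overall architecture coincides with the paper's: reduce via the dual smoothing estimate \eqref{dualkawa} to bounding $\|D^{s-1}(u\partial_x u)\|_{L^1_xL^2_T}$, split off the commutator $[D^{s-1},u]\partial_xu$, treat it with the weighted Kato--Ponce inequality of Lemma \ref{commuter} for an $A_4$ power weight, and close everything with the interpolation inequalities \eqref{interpp}--\eqref{interp2}. However, your treatment of the principal term $u\,D^{s-1}\partial_x u$ has a genuine quantitative gap. Since the interpolation budget $4\alpha+\beta\le s$ forbids placing any positive weight on a factor already carrying $s$ derivatives, your pairing forces $a_2=0$ and hence $a_1=a>\tfrac12$ on the low factor; but the $L^\infty_{x}$ control of $\langle x\rangle^{a}u$ via $H^{1/2^+}\hookrightarrow L^\infty$ then costs $4a+\tfrac12\le s$, which together with $a>\tfrac12$ requires $s>\tfrac52$. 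Your scheme therefore does not close on the range $\tfrac{13}{6}<s\le\tfrac52$ covered by the proposition, and your closing claim that $s>\tfrac{13}{6}$ is ``exactly what makes all constraints simultaneously feasible'' is not borne out by your own accounting.

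The missing ingredient is the Strichartz estimate \eqref{str2.4}. The paper estimates
$\|u\,D^{s-1}\partial_xu\|_{L^1_xL^2_T}\le \|u\|_{L^{6/5}_xL^3_T}\|D^{s-1}\partial_xu\|_{L^6_{xT}}$,
controlling the second factor by \eqref{str2.4} (with $\theta=\tfrac23$, $\alpha=0$) plus Lemma \ref{lemonkawa} --- no weight needed there --- while the first factor only requires the embedding $H^{1/6}\hookrightarrow L^3$, so the budget becomes $4r+\tfrac16\le s$ with $r>\tfrac12$, i.e.\ exactly $s>\tfrac{13}{6}$. A similar issue recurs in your commutator term: after applying \eqref{interpp} to $\|\langle x\rangle^{r/4}D^{s-1}u\|_{L^4_x}$, the derivative-only factor carries $\tfrac{s-1}{1-\theta}>s$ derivatives, which cannot be read off from $\Lambda^T(u)$; the paper again invokes \eqref{str2.4} (with $\theta=\tfrac12$, $\alpha=\tfrac32$, gaining $D^{3/8}$ in $L^8_TL^4_x$) to absorb the overshoot $\tfrac{1}{4s-5}<\tfrac38$. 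Without these Strichartz inputs the argument does not reach the stated threshold, although it would suffice for the application in Theorem \ref{dispkawa}, where $s=\tfrac72^-$.
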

\begin{proof}
From Theorem \ref{persistkawa} we already know that $\mathcal{Z}(t)\in H^s(\R)$. So we only need to prove that $D^{s+1}\mathcal{Z}(t)\in L^2(\R)$. Note that in the proof of Theorem \ref{persistkawa} we have assumed $0<T<1$; thus,	in view of \eqref{dualkawa} we have
	\begin{align*}
	\left\| D^{s+1}\int_{0}^{t}W(t-t')(u\partial_x u)dt' \right\|_{L^2_x} &\le C \|D^{s-1}(u\partial_x u)\|_{L^1_xL^2_T}\\ &\le C\left(\|uD^{s-1}\partial_x u\|_{L^1_xL^2_T}+\left\|\left[D^{s-1},u \right]\partial_x u\right\|_{L^1_xL^2_T}\right)\\&\equiv C \left(\RNum{1}+\RNum{2}\right), 
	\end{align*}
	where $\left[D^{s-1},u \right]\partial_x u=D^{s-1}(u\partial_xu)-uD^{s-1}\partial_xu$.
	According to H\"older's inequality,
	$$
	\RNum{1}\le \|u\|_{L^{6/5}_x L^3_T} \|D^{s-1}\partial_x u\|_{L^6_{xT}}\equiv \RNum{1}_1 \RNum{1}_2. $$
	
	In order to estimate $\RNum{1}_1$ we use H\"older's inequality again to get 
	$$
	\RNum{1}_1\le \|\langle x\rangle^{-r}\|_{L^2_x}\|\langle x\rangle^{r} u\|_{L^3_T L^3_x}\le CT^{1/3}\|\langle x\rangle^{r}u\|_{L^\infty_T L^3_x}, 
	 $$ 
	where $r>\frac{1}{2}$. Using the embedding $H^{1/6}(\R)\hookrightarrow L^3(\R)$ together with the interpolation \eqref{interp2} we get 
	\begin{equation}\label{gotta}
		I_1\le C T^{1/3}\|J^{1/6}\langle x\rangle^{r}u\|_{L^\infty_T L^2_x}\le C T^{1/3}\|J^s u\|_{L^\infty_TL^2_x}^{\theta}\|\langle x\rangle^{s/4}u\|_{L^\infty_T L^2_x}^{1-\theta}, 
	\end{equation}  
where $\theta=1/6s$. Note that to apply \eqref{interp2} we have written $r=(1-\theta)\frac{s}{4}$, therefore the condition  $r>\frac{1}{2}$ forces $s>\frac{13}{6}$. According to the weighted local theory the right-hand side of \eqref{gotta} is finite.
	
	On the other hand, from H\"older's inequality and the Strichartz estimate \eqref{str2.4} with $\theta=\frac{2}{3}$ and $\alpha=0$ it follows that
	\begin{equation}\label{gotta1}
		\begin{split}
		\RNum{1}_2&\le T^{1/10}\|D^{s-1}\partial_x u\|_{L^{15}_TL^6_x}\le T^{1/10}\|D^{s-1}\partial_x u_0 \|_{L^{15}_TL^6_x}\\  &\hspace{50mm} +T^{1/10}\left\|D^{s-1}\partial_x W(t)\int_{0}^{t}W(-t')u\partial_x u dt' \right\|_{L^{15}_TL^6_x}\\ &\le CT^{1/10}\|D^s u_0\|_{L^2_x}+CT^{1/10}\int_0^T\left\| D^s(u\partial_x u)\right\|_{L^2_x} dt'\\&\le
		CT^{1/10}\|D^s u_0\|_{L^2_x}+CT^{3/5}\|D^s(u\partial_x u)\|_{L^2_{xT}},
		\end{split}
		\end{equation}
	which is finite according to Lemma \ref{lemonkawa}.
	
	From \eqref{gotta} and \eqref{gotta1} we conclude that $I$ is finite.
	
	It remains to prove $\RNum{2}$ is finite. For that let us introduce the weights  $v=w=\langle x\rangle^{r}$, with $r>1$ to be determined latter. By setting $\ell=2$ and $p=q=4$ we see that $v^{\frac{\ell}{p}}w^{\frac{\ell}{q}}=\langle x\rangle^{r}$. Since $\langle x\rangle^{r}\in A_4$, from H\"older's inequality and Lemma \ref{commuter} we obtain
	\begin{equation}\label{gotta2}
		\begin{split}
		II&\leq C \|\langle x\rangle^{-r/2}\|_{L^2_x}\|\langle            x\rangle^{r/2}[D^{s-1},u]\partial_x u\|_{L^2_{xT}}\\
		&\leq  C\left\|\|\langle x\rangle^{r/4}D^{s-1}u\|_{L^4_x} \|\langle x\rangle^{r/4}\partial_x u\|_{L^4_x} +\|\langle x\rangle^{r/4}\partial_x u\|_{L^4_x} \|\langle x\rangle^{r/4}D^{s-2}\partial_xu\|_{L^4_x} \right\|_{L^2_T}.
		\end{split}
	\end{equation}
Because $D=\mathcal{H}\partial_x$ we infer from Lemma \ref{hilb} that
$$
\|\langle x\rangle^{r/4}D^{s-2}\partial_xu\|_{L^4_x}=\|\langle x\rangle^{r/4}\mathcal{H}D^{s-1}u\|_{L^4_x}\leq C\|\langle x\rangle^{r/4}D^{s-1}u\|_{L^4_x},
$$
which, from \eqref{gotta2}, yields
\[
\begin{split}
II&\leq C\left\|\|\langle x\rangle^{r/4}D^{s-1}u\|_{L^4_x} \|\langle x\rangle^{r/4}\partial_x u\|_{L^4_x} \right\|_{L^2_T}\\
&\leq C \left\|\|\langle x\rangle^{r/4}D^{s-1}u\|_{L^4_x}^2  \right\|_{L^2_T} + C\left\|\|\langle x\rangle^{r/4}\partial_x u\|_{L^4_x}^2  \right\|_{L^2_T}\\
&\leq C\left\|\|\langle x\rangle^{r/4}D^{s-1}u\|_{L^4_x}  \right\|_{L^4_T}^2 + C\left\|\|\langle x\rangle^{r/4}\partial_x u\|_{L^4_x}  \right\|_{L^4_T}^2 \equiv C\RNum{2}_1+C\RNum{2}_2.
\end{split}
\]
	We begin estimating $\RNum{2}_1$ by using \eqref{interpp} (with $D$ instead of $J$):
	\begin{equation} \label{gotta2.1}
		\begin{split}
		\RNum{2}_1^{1/2}&=\left\| \|\langle x\rangle^{r/4}D^{s-1}u\|_{L^4_x}  \right\|_{L^4_T}\le C \left\| \|\langle x\rangle^{b}u\|_{L^4_x}^{\theta} \|D^a u\|_{L^4_x}^{1-\theta} \right\|_{L^4_T}\\ 
		&\le C \left\| \|\langle x\rangle^{b}u\|_{L^4_x} + \|D^a u\|_{L^4_x} \right\|_{L^4_T}\\
		&\le C T^{1/4} \|\langle x\rangle^{b}u\|_{L^\infty_TL^4_x} + CT^{1/8}\|D^a u\|_{L^8_TL^4_x} 
	\end{split}
	\end{equation}
	where 
	\begin{equation}\label{gotta3}
	\theta\in(0,1), \quad a=\frac{s-1}{1-\theta}, \quad \mbox{and}\quad b=\frac{r}{4\theta}.
	\end{equation}
	For the term $\|\langle x\rangle^{b} u\|_{L^\infty_TL^4_x}$ we use the embedding $H^{1/4}(\R)\hookrightarrow L^4(\R)$ and \eqref{interp2} to obtain
	\begin{equation}\label{gotta3.1}
		\|\langle x\rangle^{b} u\|_{L^\infty_TL^4_x}\le C \|J^{1/4}(\langle x\rangle^{b} u)\|_{L^\infty_T L^2_x} \le C \|J^s u\|_{L^\infty_T L^2_x}^{1-\lambda} \|\langle x\rangle^{s/4} u\|_{L^\infty_T L^2_x}^\lambda <\infty,
	\end{equation}
	with
	\begin{equation}\label{gotta4}
\lambda\in(0,1), \quad	\lambda s=\frac{1}{4}, \quad \mbox{and}\quad  \quad(1-\lambda)\frac{s}{4}=b.
	\end{equation}
Conditions \eqref{gotta3} and \eqref{gotta4} leave $\theta=\frac{4r}{4s-1}$, which is in the interval $(0,1)$ provided $s>r+\frac{1}{4}$. Hence, if  $r=1+\varepsilon$ for some $0<\varepsilon<11/12$ we see that \eqref{gotta3.1} holds for any $s>13/6$.

For the second term on the right-hand side of \eqref{gotta2.1}, according to the choice of $\theta$ and $r$ above, we have $$
a=\frac{s-1}{1-\theta}=\frac{4s^2-5s+1}{4s-(5+4\varepsilon)}=s+\varepsilon+\frac{1}{4s-5-4\varepsilon}+\frac{\varepsilon(5+4\varepsilon)}{4s-5-4\varepsilon}.
$$
Therefore, by assuming $\varepsilon$ sufficiently small we may write
$$
a=s+\frac{1}{4s-5}+\tilde{\varepsilon}
$$
 where $\tilde{\varepsilon}>0$ is also small enough. By setting $\delta=\frac{3}{8}-\tilde{\varepsilon}-\frac{1}{4s-5}$, our assumption $s>\frac{13}{6}$ gives $\delta>0$ and we may write $a=3/8+s-\delta$. We employ the Strichartz estimate \eqref{str2.4} with $\theta=\frac{1}{2}$ and $\alpha=\frac{3}{2}$ to get
	\begin{equation}\label{gotta5}
		\begin{split}
	\|D^a u\|_{L^8_TL^4_x}&\le \|D^{3/8}W(t)D^{s-\delta} u_0\|_{L^8_TL^4_x}+\left\|D^{3/8}W(t)\int_0^tW(-t')D^{s-\delta}(u\partial_x u)dt'\right\|_{L^8_TL^4_x}\\&\le \|D^{s-\delta}u_0\|_{L^2_x}+\int_0^T\|D^{s-\delta} u\partial_x u\|_{L^2_x}dt' \\ &\le \|u_0\|_{H^s} + CT^{1/2}\|u\partial_x u\|_{L^2_TH^s}<\infty ,
		\end{split}
		\end{equation}
where the right-hand side of the above inequality is finite thanks to Lemma \ref{lemonkawa}.  This proves $\RNum{2}_1<\infty$.
	
	To see that $\RNum{2}_2$ is finite we proceed in  exactly the same manner by noticing that $\RNum{2}_2$ is almost the same as $\RNum{2}_1$ but with less derivatives. Indeed, from Lemma \ref{hilb} and \eqref{commuter},
	\[
	\begin{split}
\RNum{2}_2^{1/2}&= \|\langle x\rangle^{r/4}\mathcal{H}D u\|_{L^4_{xT}}
\leq C\|\langle x\rangle^{r/4}D u\|_{L^4_{xT}}\\
&\leq CT^{1/4}\|\langle x\rangle^{r/4\theta}u\|_{L^\infty_TL^4_x} + CT^{1/8}\|D^{\frac{1}{1-\theta}}u\|_{L^8_T L^4_x}
	\end{split}
	\] 
with (as in \eqref{gotta3.1})
 $$\|\langle x \rangle^{r/4\theta}u\|_{L^\infty_TL^4_x}\le C \|J^{1/4}(\langle x\rangle^{r/4\theta}u)\|_{L^\infty_TL^2_x}\le C\|J^s u\|_{L^\infty_TL^2_x}^{1-\lambda} \|\langle x\rangle^{s/4}u\|_{L^\infty_T L^2_x}^\lambda<\infty. $$
Besides, since
$$
\frac{1}{1-\theta}=\frac{4s-1}{4s-5-4\varepsilon}=1+\frac{4}{4s-5}+\tilde{\tilde{\varepsilon}}=\frac{3}{8}+\eta,
$$
for $\tilde{\tilde{\varepsilon}}>0$ small and $\eta=\frac{5}{8}+\frac{4}{4s-5}+\tilde{\tilde{\varepsilon}}<s$, as done in \eqref{gotta5} we deduce
	\begin{align*}\nonumber \|D^{\frac{1}{1-\theta}}u\|_{L^8_T L^4_x}&= \|D^{3/8}D^\eta u \|_{L^8_TL^4_x}\\
		&\le C\|u_0\|_{H^\eta}+CT^{1/2}\|u\partial_x u\|_{L^2_TH^\eta}\\
		&\le C\|u_0\|_{H^s}+CT^{1/2}\|u\partial_x u\|_{L^2_TH^s}<\infty.
	\end{align*}
This shows that $II_2$ is finite and completes the proof of the proposition.
\end{proof}

\begin{proof}[Proof of Theorem \ref{dispkawa}]
	Let $u_0\in C^\infty(\R)\cap{H^{7/2}}^-(\R) \cap L^2({\langle x\rangle^{7/4}}^-dx)$ be the initial data constructed in Proposition \ref{step1kawa}. Let $u(t)$, $t\in[0,T]$, be the solution corresponding. We may assume that $\sigma$ is sufficiently small such that $\sigma\in (0,T)$. Thus, for $t^*=\sigma$,
	$$
	u(t^*)=W(t^*)u_0+\int_0^{t^*}W(t^*-t')(u\partial_x u)dt'=:W(t^*)u_0+\mathcal{Z}(t^*).
	$$
	From Proposition \ref{kawaduh} we know that $\mathcal{Z}(t^*)\in H^{\frac{9}{2}^-}(\R)\hookrightarrow C^3(\R)$. Since $W(t^*)u_0\in C^3(\R\setminus\{0\})\setminus C^3(\R)$, the conclusion then follows from Proposition \ref{step1kawa}.
\end{proof}

\subsection{The Hirota-Satsuma system}

Here we prove Theorem \ref{disphss} in the same spirit of Section \ref{KEsec}. So, we first construct an appropriate initial data for the corresponding linear problem and then show that the integral part of the solution is smoother than the linear one.

\subsubsection{Construction of the initial data}
Let $\{U_a(t)\}$ and $\{U(t)\}$ be the unitary groups introduced in Section \ref{sec3.1}. In
 \cite[Section 3]{2017} the authors showed that, for some suitable sequence $\{\alpha_j\}$,
 $$
 w_0(x):=\sum_{j=1}^{\infty}\alpha_jU(-j)e^{-2|x|}
 $$
 belongs to 
 $$
 C^{\infty}(\R)\cap L^\infty(\R)\cap H^{3/2^-}(\R)\cap L^2(\langle x\rangle^{3/2^-}dx)
 $$
 and satisfies:
 \begin{itemize}
 	\item[(i)] for any $t\notin \mathbb{Z}$, $U(t)w_0\in C^1(\R)$;
 	\item[(ii)] for any $t\in \mathbb{Z}$, $U(t)w_0\in C^1(\R\setminus\{0\})\setminus C^1(\R)$.
 \end{itemize}
Here, with a slightly modification of their proof and by taking 
 \begin{equation}\label{hirota1}
  	u_0(x):=\sum_{j=1}^{\infty}\alpha_j U_a(-\sigma j)e^{-2|x|}\hspace{5mm}\mbox{and}\hspace{5mm} v_0(x):=\sum_{j=1}^{\infty}\alpha_j U(-\sigma j)e^{-2|x|},
  \end{equation}
  for some real constant $\sigma$, we can show the following.

\begin{proposition}\label{step1hss}
	The functions in \eqref{hirota1} satisfy
	$$(u_0,v_0)\in \left(C^\infty(\R)\cap{H^{3/2}}^-(\R) \cap L^2({\langle x\rangle^{3/2}}^-dx)\right)^2.$$  Moreover, the associated global-in-time solution $(u,v)\in \left(C(\R;{H^{3/2}}^-(\R))\right)^2$ of the linear part of the IVP \eqref{system} satisfy
	\begin{enumerate}
		\item[(i)] For any $t>0$ with $t\notin \sigma\Z^+$ we have $(u,v)(\cdot,t)\in \left(C^\infty(\R)\right)^2.$
		\item[(ii)] For any $t\in\sigma\Z^+$ we have $(u,v)(\cdot,t)\in \left(C^1(\R\setminus\{0\})\setminus C^1(\R)\right)^2$. 
	\end{enumerate}
\end{proposition}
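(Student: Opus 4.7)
The plan is to mirror the proof of Proposition~\ref{step1kawa}, adapting it to the third-order groups $U_a$ and $U := U_{-1}$ and to the profile $\phi(x) := e^{-2|x|}$. I first record the needed properties of $\phi$: it is even, belongs to $H^{3/2^-}(\R) \cap L^2(\langle x\rangle^{3/2^-}dx)$ (its Fourier transform decays as $\xi^{-2}$ while exponential decay kills any polynomial weight), lies in $C^1(\R\setminus\{0\})\setminus C^1(\R)$ because $\phi'(x) = -2\,\mathrm{sgn}(x)e^{-2|x|}$ has a jump at $0$, and satisfies $e^{\pm x}\phi \in L^2(\R)$ by direct computation.

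To obtain $v_0 \in C^\infty(\R)$, I would reduce to showing $e^{-x}v_0 \in H^m(\R)$ for every $m \in \Z^+$. Setting $w(x,t) := e^{-x}U(t)\phi(x)$ and using $\partial_x^k(e^x w) = e^x(I+\partial_x)^k w$, the equation $\partial_t v + \partial_x^3 v = 0$ with $v = U(t)\phi$ becomes $\partial_t w + (I+\partial_x)^3 w = 0$, whose Fourier solution is
\[
\widehat w(\xi,t) = e^{-t(1+i\xi)^3}\widehat{e^{-x}\phi}(\xi) = e^{-t}\,e^{-3it\xi}\,e^{3t\xi^2}\,e^{it\xi^3}\,\widehat{e^{-x}\phi}(\xi).
\]
At $t = -\sigma j$ the real factor becomes $e^{-3\sigma j\xi^2}$, and Plancherel yields
\[
\|\partial_x^m(e^{-x}U(-\sigma j)\phi)\|_{L^2} \le e^{\sigma j}\,\|\xi^m e^{-3\sigma j\xi^2}\|_{L^\infty}\,\|e^{-x}\phi\|_{L^2} \le \frac{c_m\,e^{\sigma j}}{(3\sigma j)^{m/2}}\|e^{-x}\phi\|_{L^2}.
\]
Choosing $\alpha_j := e^{-j^2}$, the series converges in every $H^m$, so by Sobolev embedding $e^{-x}v_0 \in C^\infty(\R)$ and thus $v_0 \in C^\infty(\R)$. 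For $u_0$ the same scheme works with $U_a$: the analogous conjugation produces $\partial_t w \mp a(I \pm \partial_x)^3 w = 0$, and I pick the $e^x$ or $e^{-x}$ side so that the real factor at $t=-\sigma j$ is $e^{-3|a|\sigma j\xi^2}$ (both $e^{\pm x}\phi \in L^2$, so either conjugation is legal). The inclusion $(u_0,v_0) \in H^{3/2^-}(\R) \cap L^2(\langle x\rangle^{3/2^-}dx)$ then follows from Theorem~\ref{linearteo} applied term-by-term (the phases $a\xi^3$ and $-\xi^3$ satisfy \eqref{AA} and \eqref{BB} with $K = 2$, so $b \le s/2$ covers $b < 3/4$), with the at-worst linear-in-$j$ growth absorbed by $\alpha_j$.

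For property (i), with $t > 0$ and $t \notin \sigma\Z^+$, the argument $t - \sigma j$ in $U(t)v_0 = \sum_j \alpha_j U(t-\sigma j)\phi$ takes both signs as $j$ varies. I would also record the dual estimate obtained by conjugating with $e^x$: the same calculation now produces a Gaussian factor $e^{-3t\xi^2}$, good for $t > 0$. The symmetry $U(s)\phi(-x) = U(-s)\phi(x)$ for even $\phi$ (which follows from $\widehat\phi$ even and the change of variables $\xi\mapsto-\xi$) gives $\|e^{-x}U(s)\phi\|_{H^m} = \|e^x U(-s)\phi\|_{H^m}$, and these two one-sided bounds stitch into a single estimate of the shape $c_m\,e^{|s|}/(3|s|)^{m/2}$ on $\|e^{-x}U(s)\phi\|_{H^m}$ valid for every $s \ne 0$. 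Since $t \notin \sigma\Z^+$ no denominator vanishes, and the super-exponential decay of $\alpha_j$ makes the series summable in every $H^m$; hence $U(t)v_0 \in C^\infty(\R)$. The same argument handles $U_a(t)u_0$ after the sign of the conjugation is fixed.

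For property (ii), at $t = \sigma n$ we split $U(\sigma n)v_0 = \alpha_n\phi + \sum_{j\ne n}\alpha_j U(\sigma(n-j))\phi$; by step (i) the sum is in $C^\infty(\R)$, while the isolated term $\alpha_n e^{-2|x|}$ carries the $C^1$ singularity at the origin. The most delicate step is the bookkeeping in step (i): the four combinations of group $\{U_a, U\}$ with conjugation $\{e^{-x}, e^x\}$ must each produce Gaussian smoothing with the right sign, and these must be aligned with the sign of the time argument $t-\sigma j$ via the symmetry of $\phi$. Once this accounting is correct the remainder of the argument is parallel to that of Proposition~\ref{step1kawa}.
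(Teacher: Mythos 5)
Your proposal is correct and follows essentially the same route as the paper, whose own proof of Proposition~\ref{step1hss} is simply a citation of Section~3 of \cite{2017}: you reconstruct that argument by transplanting the exponential-conjugation/Gaussian-smoothing scheme of Proposition~\ref{step1kawa} to the Airy groups $U_a$ and $U$, and the conjugation identities, the factor $e^{-t(1+i\xi)^3}$, and the sign bookkeeping for $a\gtrless 0$ all check out. The only caveat is your claimed single bound $\|e^{-x}U(s)\phi\|_{H^m}\lesssim c_m e^{|s|}(3|s|)^{-m/2}$ for \emph{every} $s\neq 0$: for $s>0$ the function $e^{-x}U(s)\phi$ need not lie in $L^2$ (the reflection symmetry sends it to $e^{x}U(-s)\phi$ with a negative time argument, which is again the bad regime), so in step (i) one should instead split the series into the finitely many terms with $t-\sigma j>0$ (each individually $C^\infty$ via the $e^{x}$ conjugation) and the tail with $t-\sigma j<0$ (summable in every $H^m$ via the $e^{-x}$ conjugation) --- the same gloss the paper itself uses in \eqref{rightmost}, and a purely cosmetic repair.
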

\begin{proof}
	See Section 3 in  \cite{2017} (see also Lemma 3.2 in \cite{palacios}).
\end{proof}

\subsubsection{Nonlinear smoothing}

Let us start by recalling some linear estimates.

\begin{lemma}\label{kpvlemma}
For any $a\neq0$ and $u_0\in L^2(\R)$ we have
\begin{equation}\label{ww1}
 \|D^{-1/4}_xU_a(t)u_0\|_{L^4_xL^\infty_T}\le C_a\|u_0\|_{L^2},
\end{equation}
\begin{equation}\label{ww2}
\|\partial_x U_a(t)u_0\|_{L^\infty_x L^2_T}\le C_a\|u_0\|_{L^2}
\end{equation}
and
\begin{equation}\label{wildstrich}
	\|D^{-1/12}_x U_a(t)u_0\|_{L^{60/13}_x L^{15}_T}\le C_a\| u_0\|_{L^2}.
\end{equation}
\end{lemma}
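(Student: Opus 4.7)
The plan is to treat the three inequalities separately, noting that they are all classical oscillatory integral estimates for the third-order dispersion group with phase $\Phi(\xi) = a\xi^3$. A global dilation $u_0(x) \mapsto u_0(|a|^{1/3}x)$ reduces each case of $a \neq 0$ to the case $|a| = 1$, so the only effect of the constant $a$ is on the multiplicative constant in the bound.

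For \eqref{ww2}, the local smoothing effect of Kato type, I would give a direct Plancherel-in-time argument. Starting from
\[
\partial_x U_a(t)u_0(x) = \int_\R e^{i(x\xi - ta\xi^3)}\, i\xi\, \widehat{u_0}(\xi)\, d\xi,
\]
the change of variable $\eta = a\xi^3$ turns the integral into a Fourier transform in $t$ of the function $\frac{i}{3a\xi(\eta)}e^{ix\xi(\eta)}\widehat{u_0}(\xi(\eta))$. Plancherel's theorem then yields
\[
\int_\R |\partial_x U_a(t)u_0(x)|^2\, dt = \frac{c}{|a|}\|u_0\|_{L^2}^2,
\]
uniformly in $x$, which is precisely \eqref{ww2}.

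For the maximal function estimate \eqref{ww1}, I would adopt the strategy of Kenig-Ponce-Vega: apply a Littlewood-Paley decomposition in frequency, use a stationary-phase analysis of the kernel of each dyadic piece (whose phase has second derivative $\Phi''(\xi) = 6a\xi$, non-degenerate away from the origin), and square-sum the resulting dyadic bounds; the classical gain of $1/4$ derivatives characteristic of the cubic group appears naturally from this calculation. For the Strichartz-type bound \eqref{wildstrich}, the key remark is that the pair $(p,q) = (60/13, 15)$ with loss $1/12$ lies on the line in the $(1/p, 1/q)$-plane joining the endpoint $(1/4, 0)$ associated to \eqref{ww1} (with derivative $-1/4$) to the endpoint $(0,1/2)$ associated to \eqref{ww2} (with derivative $+1$): choosing $\theta = 2/15$ in the convex combination gives $1/p = (1-\theta)/4 = 13/60$, $1/q = \theta/2 = 1/15$, and derivative exponent $-(1-\theta)/4 + \theta = -1/12$. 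Hence \eqref{wildstrich} follows by complex interpolation applied to the analytic family $\{D^{-z}U_a(t)\}$ between \eqref{ww1} and \eqref{ww2}.

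The main obstacle is \eqref{ww1}, whose proof requires the delicate dyadic/stationary-phase analysis; the other two estimates are, respectively, an elementary Plancherel computation and a clean interpolation, so the bulk of the technical work lies in the maximal function bound. In practice, since all three inequalities are standard and the paper is applying them as black boxes, one may simply cite the corresponding results of Kenig-Ponce-Vega and track the dependence on $a$ through the dilation described above.
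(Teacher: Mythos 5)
Your proposal is correct and matches the paper's proof: the paper likewise quotes Theorems 3.5 and 3.7 of Kenig--Ponce--Vega for \eqref{ww1} and \eqref{ww2} and derives \eqref{wildstrich} by Stein interpolation of an analytic family $D^{-z/4}D^{1-z}U_a(t)$ between those two endpoints, its choice $z=13/15$ being exactly your $\theta=2/15$ in the complementary parametrization. Your extra sketches (Plancherel-in-time for the smoothing effect, dyadic stationary phase for the maximal function, dilation to track the $a$-dependence) are sound but not needed beyond the citation.
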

\begin{proof}
For \eqref{ww1} and \eqref{ww2} see Theorems 3.5 and 3.7 in \cite{KPV}. Estimate \eqref{wildstrich} follows interpolating \eqref{ww1} and \eqref{ww2}; indeed, it suffices to define the family of analytic operators $T_zu_0=D^{z/4}D^{1-z}U_a(t)u_0$, $0\leq \textrm{Re}(z)\leq 1$ and apply the Stein interpolation theorem with $z=\frac{13}{15}$ (see a similar result in Corollary 3.8 of \cite{KPV}).
\end{proof}

We also recall the following Strichartz estimate :

\begin{lemma} For any $a\neq0$ and $u_0\in L^2(\R)$,
\begin{equation}\label{stric}
	\|D^{\alpha \theta/2}U_a(t)u_0\|_{L^q_TL^p_x}\le C\|u_0\|_{L^2},
\end{equation}
where $(q,p)=\left(\frac{6}{\theta(\alpha +1)}, \frac{2}{1-\theta}\right)$ and $(\theta,\alpha)\in(0,1)\times[0,1/2]$.
\end{lemma}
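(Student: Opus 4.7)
The plan is to prove \eqref{stric} by Stein's complex interpolation theorem in two stages, building from a pair of endpoint Strichartz estimates for the Airy group together with the trivial $L^2$-conservation $\|U_a(t)u_0\|_{L^\infty_T L^2_x}\le\|u_0\|_{L^2}$. The two crucial endpoints are the classical Airy Strichartz $\|U_a(t)u_0\|_{L^6_T L^\infty_x}\le C\|u_0\|_{L^2}$ and its derivative-gain companion $\|D^{1/4}U_a(t)u_0\|_{L^4_T L^\infty_x}\le C\|u_0\|_{L^2}$, both standard consequences of the dispersive bound $\|U_a(t)\|_{L^1\to L^\infty}\le C|t|^{-1/3}$ combined with a $TT^*$-argument in the spirit of \cite{KPV}.

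First I would upgrade these two $\alpha$-endpoints to the full range $\alpha\in[0,1/2]$ while keeping $\theta=1$ fixed, by Stein interpolation applied to the analytic family
\[
T_z u_0 := D^{z/4}U_a(t)u_0, \qquad 0\le\textrm{Re}(z)\le 1.
\]
Since $D^{iy/4}$ commutes with $U_a(t)$ and is an $L^2$-isometry, one can transfer the imaginary power onto the initial datum and invoke the pure $\alpha=0$ (resp.\ $\alpha=1/2$) endpoint to get uniform-in-$y$ bounds
\[
\|T_{iy}u_0\|_{L^6_T L^\infty_x}\le C\|u_0\|_{L^2}, \qquad \|T_{1+iy}u_0\|_{L^4_T L^\infty_x}\le C\|u_0\|_{L^2}.
\]
Stein's theorem at $z=2\alpha$ then delivers the intermediate endpoint
\[
\|D^{\alpha/2}U_a(t)u_0\|_{L^{6/(\alpha+1)}_T L^\infty_x}\le C\|u_0\|_{L^2} \qquad (\alpha\in[0,1/2]). \qquad (\ast)
\]

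With $(\ast)$ in hand, for each fixed $\alpha$ I would interpolate in $\theta$ using the analytic family $S_z u_0 := D^{z\alpha/2}U_a(t)u_0$ on the strip $\{0\le\textrm{Re}(z)\le 1\}$. The same commutation argument (with $D^{iy\alpha/2}$ absorbed as an $L^2$-isometry on $u_0$) yields the uniform-in-$y$ bounds
\[
\|S_{iy}u_0\|_{L^\infty_T L^2_x}\le \|u_0\|_{L^2}, \qquad \|S_{1+iy}u_0\|_{L^{6/(\alpha+1)}_T L^\infty_x}\le C\|u_0\|_{L^2}.
\]
Stein's theorem at $z=\theta$ then produces \eqref{stric} with $\frac{1}{q}=\frac{(1-\theta)\cdot 0+\theta(\alpha+1)/6}{1}=\frac{\theta(\alpha+1)}{6}$ and $\frac{1}{p}=\frac{(1-\theta)/2+\theta\cdot 0}{1}=\frac{1-\theta}{2}$, matching the claimed exponents.

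The main obstacle is establishing the two pure $\theta=1$ endpoint Strichartz bounds, which hinge on the Airy dispersive analysis of Kenig-Ponce-Vega rather than on the building blocks \eqref{ww1}-\eqref{ww2} already recalled in the excerpt. Once these are in hand the Stein interpolation is routine; crucially, the commutation of $U_a(t)$ with the imaginary powers $D^{iy}$ (together with their $L^2$-isometry property) circumvents the well-known unboundedness of these multipliers on $L^\infty_x$, which would otherwise obstruct interpolation into a target space with an $L^\infty$ spatial factor.
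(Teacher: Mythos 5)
Your argument is correct, but it is not the route the paper takes: the paper simply invokes Lemma 2.4 of Kenig--Ponce--Vega (the reference \cite{KPV2}), whose own proof runs differently from yours. There, one first establishes the fixed-time dispersive bounds $\|D^{\alpha}U_a(t)u_0\|_{L^\infty_x}\le C|t|^{-(\alpha+1)/3}\|u_0\|_{L^1_x}$ for $\alpha\in[0,1/2]$ via Van der Corput / Airy asymptotics, interpolates these with Plancherel to get $\|D^{\alpha\theta}U_a(t)u_0\|_{L^p_x}\le C|t|^{-\theta(\alpha+1)/3}\|u_0\|_{L^{p'}_x}$ with $p=2/(1-\theta)$, and then runs a single $TT^*$/Hardy--Littlewood--Sobolev argument in time to produce \eqref{stric} for all $(\theta,\alpha)$ at once. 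You instead perform two $TT^*$ arguments only at the extreme points $(\theta,\alpha)=(1,0)$ and $(1,1/2)$ and recover the full family by two Stein interpolations of the resulting space-time bounds; your key observation --- that the imaginary powers $D^{iy}$ can be commuted through $U_a(t)$ and absorbed into the $L^2$ norm of the datum, so that their unboundedness on $L^\infty_x$ is irrelevant --- is exactly the device the authors themselves use to derive \eqref{wildstrich} from \eqref{ww1} and \eqref{ww2}, so your method is entirely in the spirit of the paper even though it is not what underlies the cited lemma. Two small caveats: the $D^{1/4}$--$L^4_TL^\infty_x$ endpoint does not follow from the kernel bound $\|U_a(t)\|_{L^1\to L^\infty}\le C|t|^{-1/3}$ alone but requires the smoothed dispersive estimate $\|D^{1/2}U_a(t)u_0\|_{L^\infty_x}\le C|t|^{-1/2}\|u_0\|_{L^1_x}$ (stationary phase with the second derivative of the phase); and the Stein interpolation should be carried out on a dense class (say Schwartz data with Fourier transform vanishing near the origin) and in the mixed-norm setting, both of which are standard. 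The trade-off is that the KPV route yields the whole two-parameter family in one pass, while yours is modular and reuses only the two classical endpoint Strichartz estimates.
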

\begin{proof}
See Lemma 2.4 in \cite{KPV2}.
\end{proof}

Below we also need to use identity \eqref{trocaflp}; so, we recall the precise estimate for the term $\Phi_{t,\alpha}$.

\begin{lemma}
	Let $\alpha\in(0,1)$. If $u_0\in H^{2\alpha}(\R)\cap L^2(|x|^{2\alpha}dx)$ then the identity
	\begin{equation}\label{troca2}
		|x|^\alpha U_a(t)u_0=U_a(t)(|x|^\alpha u_0)+\left[\Phi_{t,\alpha}(\widehat{u_0}(\xi))\right]^\vee(x)
	\end{equation}
	 holds for any $t\in\R$ and almost $x\in \R$ with
	$$
	\|\left[\Phi_{t,\alpha}(\widehat{u_0}(\xi))\right]^\vee\|_{L^2}\leq C(1+|t|)\left(\|u_0\|_{L^2}+\|D^{2\alpha}u_0\|_{L^2}\right).
	$$ 
\end{lemma}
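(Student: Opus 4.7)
The plan is to follow the Fonseca--Linares--Ponce strategy recalled in the introduction around \eqref{trocaflp}: translate multiplication by $|x|^\alpha$ into Stein's fractional derivative $D^\alpha$ acting on the Fourier side, and then isolate the commutator of $D^\alpha$ with the Fourier multiplier $e^{-ita\xi^3}$.

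First I would set up the identity. From \eqref{steinderiv2} one has $\widehat{D^\alpha g}(\xi)=|\xi|^\alpha\widehat{g}(\xi)$, so Plancherel gives the $L^2$-duality $\mathcal{F}(|x|^\alpha v)=c\,D^\alpha\widehat{v}$ whenever $|x|^\alpha v\in L^2(\R)$. Applying it with $v=U_a(t)u_0$ (so $\widehat{v}=e^{-ita\xi^3}\widehat{u_0}$) and with $v=u_0$, I am led to define
\[
\Phi_{t,\alpha}(\widehat{u_0})(\xi):= D^\alpha\bigl[e^{-ita\xi^3}\widehat{u_0}(\xi)\bigr]- e^{-ita\xi^3}\,D^\alpha \widehat{u_0}(\xi);
\]
the identity \eqref{troca2} then follows by taking the inverse Fourier transform of $\widehat{|x|^\alpha U_a(t)u_0}-\widehat{U_a(t)(|x|^\alpha u_0)}=\Phi_{t,\alpha}(\widehat{u_0})$.

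For the $L^2$ bound on $[\Phi_{t,\alpha}(\widehat{u_0})]^\vee$, Plancherel reduces the problem to estimating $\|\Phi_{t,\alpha}(\widehat{u_0})\|_{L^2}$. Plugging the pointwise formula \eqref{steinderiv2} into the commutator causes the linear-in-$\widehat{u_0}$ piece to cancel exactly, giving
\[
\Phi_{t,\alpha}(\widehat{u_0})(\xi)=\frac{1}{c_\alpha}\lim_{\varepsilon\downarrow 0}\int_{|y|\ge\varepsilon}\frac{\bigl(e^{-ita(\xi+y)^3}-e^{-ita\xi^3}\bigr)\widehat{u_0}(\xi+y)}{|y|^{1+\alpha}}\,dy,
\]
in which no derivative of $\widehat{u_0}$ appears. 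Splitting the $y$-integral into $|y|\le 1$ (where a Cauchy--Schwarz in $y$ produces the Stein square function $\mathcal{D}^\alpha(e^{-ita(\cdot)^3})(\xi)$) and $|y|\ge 1$ (handled by Young's inequality against the integrable kernel $|y|^{-1-\alpha}\chi_{|y|\ge 1}$) then yields
\[
\|\Phi_{t,\alpha}(\widehat{u_0})\|_{L^2}\le C\,\|\widehat{u_0}\|_{L^2}+C\,\bigl\|\widehat{u_0}\,\mathcal{D}^\alpha(e^{-ita(\cdot)^3})\bigr\|_{L^2}.
\]

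Finally, Lemma \ref{l1} applied to $\phi(\xi)=a\xi^3$, which satisfies \eqref{AA}--\eqref{BB} with $g(\xi)=C|\xi|^2$ and exponent $3$, delivers $\mathcal{D}^\alpha(e^{-ita(\cdot)^3})(\xi)\le C\{1+(1+|t|)|\xi|^{2\alpha}\}$, from which
\[
\bigl\|\widehat{u_0}\,\mathcal{D}^\alpha(e^{-ita(\cdot)^3})\bigr\|_{L^2}\le C(1+|t|)\bigl(\|u_0\|_{L^2}+\|D^{2\alpha}u_0\|_{L^2}\bigr)
\]
closes the estimate. The main obstacle is precisely the $L^2$ bound in the previous paragraph: it is essential to work with the cancellation-exploiting commutator representation, rather than bound $D^\alpha[e^{-ita\xi^3}\widehat{u_0}]$ and $e^{-ita\xi^3}D^\alpha\widehat{u_0}$ separately via the Leibniz inequality \eqref{2.3}, because the latter route would introduce $\|\mathcal{D}^\alpha\widehat{u_0}\|_{L^2}\sim\||x|^\alpha u_0\|_{L^2}$, which is absent from the right-hand side of the stated bound.
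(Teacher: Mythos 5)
The paper does not actually prove this lemma; it is quoted verbatim from Theorem 1 of \cite{FLP}, so what you are really doing is reconstructing the Fonseca--Linares--Ponce argument. Your setup is the right one: using the principal-value Stein derivative \eqref{steinderiv2}, the exact Leibniz decomposition does cancel the terms linear in $\widehat{u_0}(\xi)$ and leaves the commutator
\[
\Phi_{t,\alpha}(\widehat{u_0})(\xi)=\frac{1}{c_\alpha}\lim_{\varepsilon\downarrow 0}\int_{|y|\ge\varepsilon}\frac{\bigl(e^{-ita(\xi+y)^3}-e^{-ita\xi^3}\bigr)\widehat{u_0}(\xi+y)}{|y|^{1+\alpha}}\,dy,
\]
the $|y|\ge 1$ piece is a convolution with an $L^1$ kernel and is bounded by $C\|u_0\|_{L^2}$, and your closing remark is correct: one cannot estimate the two terms of the commutator separately via \eqref{2.3} without reintroducing $\||x|^\alpha u_0\|_{L^2}$.

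The gap is the $|y|\le 1$ estimate. To make Cauchy--Schwarz in $y$ produce exactly $\mathcal{D}^\alpha(e^{-ita(\cdot)^3})(\xi)$ you must split the kernel as $|y|^{-(1+\alpha)}=|y|^{-(1+2\alpha)/2}\cdot|y|^{-1/2}$, and the second factor is then
\[
\Bigl(\int_{|y|\le 1}\frac{|\widehat{u_0}(\xi+y)|^2}{|y|}\,dy\Bigr)^{1/2},
\]
which is not controlled by $\|\widehat{u_0}\|_{L^2}$: its square is the convolution of $|\widehat{u_0}|^2\in L^1$ with the non-integrable kernel $|y|^{-1}\chi_{\{|y|\le 1\}}$, and it equals $+\infty$ on any interval where $|\widehat{u_0}|$ is bounded below. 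Note also that Cauchy--Schwarz can only ever yield an average of $|\widehat{u_0}|$ near $\xi$, never the pointwise value $|\widehat{u_0}(\xi)|$ appearing in your claimed intermediate bound $\|\widehat{u_0}\,\mathcal{D}^\alpha(e^{-ita(\cdot)^3})\|_{L^2}$, so that display is not established. Relaxing to $\mathcal{D}^{\alpha'}$ with $\alpha'>\alpha$ makes the second factor square-integrable, but Lemma \ref{l1} then gives growth $|\xi|^{2\alpha'}$ and you only reach $\|u_0\|_{H^{2\alpha'}}$, an $\varepsilon$-loss relative to the stated $\|D^{2\alpha}u_0\|_{L^2}$. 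The endpoint requires a finer treatment of the phase difference near $y=0$ (the dangerous contribution is the linear part $e^{-3ita\xi^2y}-1$, which against $|y|^{-1-\alpha}$ must be handled by subtracting $\widehat{u_0}(\xi)$, using the oddness of $\sin(3ta\xi^2y)|y|^{-1-\alpha}$ and the scaling bound $\int\min(2,\lambda^2y^2)|y|^{-1-\alpha}dy\le C|\lambda|^{\alpha}$); this is precisely the content of the proof in \cite{FLP} and is missing here.
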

\begin{proof}
See Theorem 1 in \cite{FLP}.
\end{proof}

 With these tools in hand we can prove the following smoothing property.

\begin{proposition}\label{hssduh}
	Let $\frac{7}{6}<s<\frac{11}{6}$ and consider an initial data $(u_0,v_0)\in \left(H^s(\R)\cap L^2(|x|^{s}dx)\right)^2$. Let $(u,v)(t)$ be the solution of the Hirota-Satsuma system \eqref{system} provided by Theorem \ref{persistHSS} and given by \begin{equation*}
		\left\{	\begin{array}{l}
			u(t)=U_a(t)u_0+\displaystyle\int_{0}^{t}U_a(t-t^\prime)(6au\partial_xu - 2rv\partial_x v)(t^\prime)dt^\prime:=U_a(t)u_0 + \mathcal{Z}_1(t) \\
			v(t)=U(t)v_0+3\displaystyle\int_{0}^{t}U(t-t^\prime)(u\partial_x v)(t^\prime)dt^\prime:=U(t)v_0 + \mathcal{Z}_2(t).
		\end{array}\right.
	\end{equation*}
	Then, $\mathcal{Z}_i(t)\in H^{s+\frac{1}{6}}(\R)$, $i=1,2$, $t\in[0,T]$.
\end{proposition}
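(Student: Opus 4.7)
The plan is to mimic the strategy used for Proposition \ref{kawaduh}, adapted to the third-order setting of the Hirota--Satsuma system. The first step is to reduce matters to an $L^1_xL^2_T$ estimate on the nonlinearity. Dualizing the Kato smoothing estimate \eqref{ww2} (applied to both $U_a(t)$ and $U(t)$, since both are KdV-type unitary groups for which \eqref{ww2} holds) gives
\begin{equation*}
\sup_{t\in[0,T]}\left\|D\int_0^t U_a(t-t')f(t')\,dt'\right\|_{L^2_x}\le C\|f\|_{L^1_xL^2_T},
\end{equation*}
and likewise for $U(t)$. Factoring $D^{s+1/6}=D^{s-5/6}\cdot D$ in front of the Duhamel integrals, it therefore suffices to prove
\begin{equation*}
\|D^{s-5/6}(u\partial_x u)\|_{L^1_xL^2_T},\quad \|D^{s-5/6}(v\partial_x v)\|_{L^1_xL^2_T},\quad \|D^{s-5/6}(u\partial_x v)\|_{L^1_xL^2_T}<\infty.
\end{equation*}
Note $s-5/6\in(1/3,1)$ by hypothesis, so these are genuine fractional derivatives.

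For the diagonal term $u\partial_x u$, apply the Leibniz decomposition
\begin{equation*}
D^{s-5/6}(u\partial_x u)=u\,D^{s-5/6}\partial_x u+[D^{s-5/6},u]\partial_x u,
\end{equation*}
and use $\partial_x=\mathcal{H}D$ so that $D^{s-5/6}\partial_x u=\mathcal{H}D^{s+1/6}u$. The main term is then bounded by H\"older's inequality with exponents matching the Strichartz estimate \eqref{wildstrich}:
\begin{equation*}
\|u\,\mathcal{H}D^{s+1/6}u\|_{L^1_xL^2_T}\le C\,\|u\|_{L^{60/47}_xL^{30/13}_T}\,\|D^{s+1/6}u\|_{L^{60/13}_xL^{15}_T}.
\end{equation*}
The high-derivative factor is controlled by splitting $u=U_a(t)u_0+\mathcal{Z}_1$: \eqref{wildstrich} bounds the linear part at the cost of $D^{s+1/4}u_0$, which one accommodates by interpolating \eqref{wildstrich} against the Strichartz estimate \eqref{stric} with $(\theta,\alpha)=(2/3,1/2)$, yielding a bound in terms of $\|u_0\|_{s,2}$; the Duhamel part is then handled with a bootstrap using the $L^2_TH^s$ control provided by the analogue of Lemma \ref{lemonkawa} for the Hirota--Satsuma nonlinearity (already established in \cite{HSS} during the proof of Theorem \ref{borys}). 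The low-derivative factor $\|u\|_{L^{60/47}_xL^{30/13}_T}$ is handled by introducing a weight:  $\|u\|_{L^{60/47}_x}\le \|\langle x\rangle^{-r}\|_{L^{p_0}_x}\|\langle x\rangle^{r}u\|_{L^{p_1}_x}$ with $r>1/2$, followed by the Sobolev embedding $H^{\tau}(\R)\hookrightarrow L^{p_1}(\R)$ for a suitable $\tau\in(0,1)$ and the interpolation inequality \eqref{interp2} with exponent $\lambda\in(0,1)$ chosen so that $\lambda s=\tau$ and $(1-\lambda)\frac{s}{2}=r$. Solving these constraints forces $s>7/6$, which is exactly the lower bound in the hypothesis.

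The commutator term $\|[D^{s-5/6},u]\partial_x u\|_{L^1_xL^2_T}$ is treated by the weighted Kato--Ponce inequality, Lemma \ref{commuter}, with weight $v=w=\langle x\rangle^{r}$ (which lies in $A_4$ provided $r\in(0,3)$), exponents $\ell=2$, $p=q=4$, and a preliminary extraction of $\|\langle x\rangle^{-r/2}\|_{L^2_x}$; the identity $\partial_x=\mathcal{H}D$ together with Lemma \ref{hilb} then reduces everything to $\|\langle x\rangle^{r/4}D^{s-5/6}u\|_{L^4_{xT}}$ and $\|\langle x\rangle^{r/4}\partial_x u\|_{L^4_{xT}}$, which are controlled via \eqref{interpp}, \eqref{interp2} and the Strichartz estimate \eqref{stric} exactly as in \eqref{gotta2.1}--\eqref{gotta5}. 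The cross-term $D^{s-5/6}(u\partial_x v)$ is handled by the same Leibniz-plus-commutator scheme, the only difference being that derivatives are distributed between the two unknowns. The main obstacle, and the place where the range $7/6<s<11/6$ is pinned down, is to ensure compatibility of three independent constraints: the Muckenhoupt class requirement on the weight ($r<3$), the interpolation parameters in \eqref{interp2} must lie in $(0,1)$ (forcing $s>7/6$), and the derivative obtained after Strichartz must not exceed the available regularity $s$ (forcing $s<11/6$, so that $s+1/6<2$ and the weight exponent $s/2$ can supply enough decay through \eqref{interp2}). Once these compatibility checks are done, the estimates close and yield $\mathcal{Z}_i(t)\in H^{s+1/6}(\R)$ for $i=1,2$ and every $t\in[0,T]$.
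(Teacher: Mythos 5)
Your overall architecture matches the paper's: dualize the Kato smoothing estimate \eqref{ww2} to reduce to an $L^1_xL^2_T$ bound on $D^{s-5/6}$ of the nonlinearity, then split into a ``main'' term and a commutator and close with weighted interpolation. However, there is a genuine gap in your main-term estimate, and it is precisely the step where the $1/6$ of extra regularity must be produced. You bound
$\|u\,\mathcal{H}D^{s+1/6}u\|_{L^1_xL^2_T}\le C\|u\|_{L^{60/47}_xL^{30/13}_T}\|D^{s+1/6}u\|_{L^{60/13}_xL^{15}_T}$
and invoke \eqref{wildstrich} for the second factor. But \eqref{wildstrich} \emph{loses} $1/12$ of a derivative in $L^{60/13}_xL^{15}_T$, so the linear part of that factor costs $\|D^{s+1/4}u_0\|_{L^2}$, which exceeds the available $H^s$ regularity. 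Your proposed rescue --- interpolating \eqref{wildstrich} against \eqref{stric} with $(\theta,\alpha)=(2/3,1/2)$ --- cannot work at these exponents: the estimate \eqref{stric} with that choice gains exactly $1/6$ of a derivative but only in $L^6_{xT}$, and any interpolate between a gain of $-1/12$ at $(60/13,15)$ and a gain of $+1/6$ at $(6,6)$ achieves the required gain of $1/6$ only at the endpoint $(6,6)$. Once you move to $L^6_{xT}$ the H\"older companion becomes $\|u\|_{L^{6/5}_xL^3_T}$, not $\|u\|_{L^{60/47}_xL^{30/13}_T}$, so your displayed pairing and the subsequent treatment of the low-derivative factor are for the wrong exponents. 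This is exactly how the paper proceeds for its term $\RNum{1}$: it pairs $\|u\|_{L^{6/5}_xL^3_T}$ with $\|D^{s+1/6}v\|_{L^6_{xT}}$, the latter controlled by \eqref{stric} with $\alpha=1/2$, $\theta=2/3$ and $\|v_0\|_{s,2}$.

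The pairing $L^{60/13}_xL^{15}_T\times L^{60/47}_xL^{30/13}_T$ does appear in the paper, but with the roles reversed relative to your proposal: the high-integrability factor carries only $\partial_x v=\mathcal{H}D^{13/12}(D^{-1/12}v)$, so \eqref{wildstrich} costs $\|v_0\|_{H^{13/12}}\le\|v_0\|_{H^s}$, while the low-integrability factor carries $D^{s-5/6}u$ (strictly fewer than $s$ derivatives), handled by inserting a weight $\langle x\rangle^{\gamma}$ with $\gamma=5/12$ and passing it through the group via the identity \eqref{troca2} together with \eqref{interpp} and the weighted local theory. Your proposal never invokes \eqref{troca2}; in a Kawahara-style two-term split this term is absorbed into the commutator, which is defensible in principle, but you then assert the commutator closes ``exactly as in \eqref{gotta2.1}--\eqref{gotta5}'' without redoing the numerology --- the KdV-type Strichartz gains in \eqref{stric} (at most $\alpha/4\le 1/8$ at $L^4_x$) differ from the Kawahara gains in \eqref{str2.4} used there, so the admissible interpolation parameters and the resulting constraints on $r$ and $s$ must be re-derived. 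Finally, the constraint $s<11/6$ does not come from ``$s+1/6<2$'' per se; it is needed so that the fractional order $s-5/6$ lies in $(0,1)$, which is the hypothesis of the Leibniz/commutator Lemma \ref{A12}.
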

\begin{proof}

We show the computations for $\mathcal{Z}_2$, same procedure apply to $\mathcal{Z}_1$. Since $\mathcal{Z}_2(t)\in H^s(\R)$ it suffices to show that $\|D^{s+\frac{1}{6}}\mathcal{Z}_2(t)\|_{L^2_x}$ is finite. We follow partially the ideas in \cite[Lemma 5.2]{palacios}. For that, first note that \eqref{ww2} and duality give
\begin{equation}\label{dualhh}
	\sup_{[0,T]}\left\| \partial_x\int_{0}^{t}U(t-t')f(\cdot,t')dt' \right\|_{L^2_x} \le C \|f\|_{L^1_xL^2_T}. 
\end{equation}
Hence, using \eqref{dualhh}, Lemma \ref{A12} (part (iii)), and H\"older's inequality we infer
\begin{flalign*}
	\left\| D^{s+\frac{1}{6}}\mathcal{Z}_2(t) \right\|_{L^2_x}&\le C \|D^{s-\frac{5}{6}}(u\partial_x v)\|_{L^1_xL^2_T}\\ &\hspace{4mm} \le C\|D^{s-\frac{5}{6}}(u\partial_x v)-uD^{s-\frac{5}{6}}\partial_x v-\partial_x v D^{s-\frac{5}{6}}u\|_{L^1_xL^2_T}\\&\hspace{9mm}+C\|uD^{s-\frac{5}{6}}\partial_x v \|_{L^1_xL^2_T}+C\|\partial_x vD^{s-\frac{5}{6}}u \|_{L^1_xL^2_T}\\&\hspace{4mm} \le C \|u\|_{L^{6/5}_xL^3_T}\|D^{s+\frac{1}{6}}v\|_{L^6_{xT}}+C\|\partial_x v\|_{L^{60/13}_xL^{15}_T}\|D^{s-\frac{5}{6}}u \|_{L^{60/47}_xL^{30/13}_T}\\ &\hspace{4mm} \le C\{ \RNum{1}+\RNum{2}_1\RNum{2}_2\}.
\end{flalign*}

To see  that $\RNum{1}$ is finite we combine the ideas developed in Section \ref{sec3.1}  together with \cite{2017} and the proof of Lemma 5.2 in \cite{palacios}. Indeed, using \eqref{stric} with $\alpha=\frac{1}{2}$, $\theta=\frac{2}{3}$ and $p=q=6$, we obtain
\[
\begin{split}
\|D^{s+\frac{1}{6}}v\|_{L^6_{xT}}&\leq \|D^{\frac{1}{6}}U(t)D^sv_0\|_{L^6_{xT}}+\left\|D^{\frac{1}{6}}U(t)\int_0^tU(-t')D^s(u\partial_x v)(t^\prime)dt^\prime\right\|_{L^6_{xT}}\\
&\leq C\|D^su_0\|_{L^2_x}+C\int_0^T\|u\partial_x v\|_{L^2_x}dt'\\
&\leq C\|u_0\|_{s,2}+CT^{1/2}\|u\partial_x v\|_{L^2_TH^s}.
\end{split}
\]
The last term in the above inequality has already been shown to be finite in the local theory (see for instance \eqref{tele0}). This shows that $\|D^{s+\frac{1}{6}}v\|_{L^6_{xT}}$ is finite. To see that $\|u\|_{L^{6/5}_xL^3_T}$ is finite we need to use the local theory in weighted spaces. In fact, from H\"older's inequality, Sobolev embedding and \eqref{interp2} we deduce, for some $r=\frac{1}{2}^+$,
\[
\begin{split}
\|u\|_{L^{6/5}_xL^3_T}&\leq C\|\langle x\rangle^ru\|_{L^3_{xT}}
\leq CT^{1/3}\|\langle x\rangle^ru\|_{L^\infty_TL^3_{x}}\\
&\leq C\|J^{1/6}(\langle x\rangle^ru)\|_{L^\infty_TL^2_{x}}\\
&\leq C \|J^su\|_{L^\infty_TL^2_{x}}^{1-\lambda} \|\langle x\rangle^{s/2^-}u\|_{L^\infty_TL^2_{x}}^{\lambda},
\end{split}
\]
with $\lambda\frac{s}{2}^-=r$ and $\frac{1}{6}<(1-\lambda)s$. Since $s>7/6$ we may take $\lambda=\frac{1}{s}^+$ to conclude that $I$ is finite.

In view of \eqref{wildstrich}, 
\begin{align*}
	&\RNum{2}_1\le\|\partial_x U(t)v_0\|_{L^{60/13}_xL^{15}_T}+\left\|\partial_x U(t)\int_0^t U(-t')u\partial_x v dt' \right\|_{L^{60/13}_xL^{15}_T}\\
	&\hspace{5mm}\le C\|\mathcal{H}D^{13/12}v_0\|_{L^2}+C\int_0^T\left\|\mathcal{H} D^{13/12}(u\partial_x v) \right\|_{L^2}dt'\\
	&\hspace{5mm}\le C\|v_0\|_{s,2}+CT^{1/2}\left\|u\partial_x v \right\|_{L^2_TH^s}<\infty,
\end{align*}
where in the last inequality we used that $\mathcal{H}$ is bounded in $L^2$ and the fact that $s>\frac{7}{6}>\frac{13}{12}$. Again, the term $\left\|u\partial_x v \right\|_{L^2_TH^s}$ may be bounded as done in the local theory.

In what comes to $\RNum{2}_2$ we argue as follows. For $\gamma>7/20$ (to be chosen latter) we have $$
II_2\le \|\langle x\rangle^{-\gamma}\|_{L_x^{20/7}}\|\langle x\rangle^{\gamma}D^{s-\frac{5}{6}}u\|_{L^{30/13}_{xT}}\leq C\|\langle x\rangle^{\gamma}D^{s-\frac{5}{6}}u\|_{L^{30/13}_{xT}}.
$$ 
Set $\mathcal{\dot{Z}}_1(t)=U_a(-t)\mathcal{Z}_1(t)$.  Using  H\"older's inequality in time and \eqref{troca2} we get
\begin{align*}
	&\RNum{2}_2\le CT^{37/90}\left\{\left\|U_a(t)\left(\langle x\rangle^\gamma D^{s-\frac{5}{6}}u_0\right)\right\|_{L^{45}_TL^{30/13}_x} + \left\|U_a(t)\{\Phi_{t,\gamma}\widehat{D^{s-\frac{5}{6}}u_0}\}^\vee\right\|_{L^{45}_TL^{30/13}_x}  \right. \\ &\hspace{24mm} \left. +\left\|U_a(t)\left(\langle x\rangle^\gamma D^{s-\frac{5}{6}}\mathcal{\dot{Z}}_1\right)\right\|_{L^{45}_TL^{30/13}_x} + \left\|U_a(t)\{\Phi_{t,\gamma}\widehat{D^{s-\frac{5}{6}}\mathcal{\dot{Z}}_1}\}^\vee\right\|_{L^{45}_TL^{30/13}_x}   \right\}.
\end{align*}
Next, by setting $\gamma=5/12$ and using Strichartz estimate \eqref{stric} with $\alpha=0$ and $\theta=2/15$ we deduce
\begin{equation}\label{sign}
\begin{split}
	\hspace{1mm}	&\RNum{2}_2\le CT^{37/90}(1+T)\left\{\|\langle x\rangle^\gamma D^{s-\frac{5}{6}}u_0\|_{L^2} + \|D^{2\gamma+s-\frac{5}{6}}u_0 \|_{L^2}+ \|D^{s-\frac{5}{6}}u_0 \|_{L^2}  \right. \\ &\hspace{35mm} \left. +\|\langle x\rangle^\gamma D^{s-\frac{5}{6}}\mathcal{\dot{Z}}_1\|_{L^2} + \|D^{2\gamma+s-\frac{5}{6}}\mathcal{\dot{Z}}_1\|_{L^2} + \|D^{s-\frac{5}{6}}\mathcal{\dot{Z}}_1\|_{L^2}  \right\}\\	&\hspace{5mm}\le CT^{37/90}(1+T)\left\{\|\langle x\rangle^\gamma D^{s-\frac{5}{6}}u_0\|_{L^2} +\|\langle x\rangle^\gamma D^{s-\frac{5}{6}}\mathcal{\dot{Z}}_1\|_{L^2} +  \|u_0\|_{H^s}+\|\mathcal{\dot{Z}}_1\|_{H^s}   \right\}.\\
\end{split}
\end{equation}
Since
$$
\|\mathcal{\dot{Z}}_1\|_{H^s} \leq C\int_0^T\|(6au\partial_xu - 2rv\partial_x v)\|_{H^s}dt',
$$
we can prove that $\|\mathcal{\dot{Z}}_1\|_{H^s}$ is finite in a similar fashion as done in the local theory. Therefore, to conclude $\RNum{2}_2$ is finite it only remains to bound the first two terms on the right-hand side of \eqref{sign}, which can be estimated using \eqref{interpp} and the weighted local theory. In fact, first note that from \eqref{interpp},
\begin{equation*}
	\|\langle x \rangle^\gamma D^{s-\frac{5}{6}}u_0\|_{L^2}\le C\|\langle x \rangle^{s/2}u_0\|_{L^2}^{1-\lambda}\|D^su_0\|_{L^2}^{\lambda}<\infty,
\end{equation*}
where $\lambda=\frac{6s-5}{6s}$ and $(1-\lambda)\frac{s}{2}=\gamma=\frac{5}{12}$. Also, setting $N(u,v)=6a u\partial_x u -2 r v\partial_x v$ and using \eqref{trocas} we have
\begin{equation*}
	\begin{array}{l}
		\|\langle x\rangle^\gamma D^{s-\frac{5}{6}}\mathcal{\dot{Z}}_1\|_{L^2}\le \displaystyle\int_0^T \|\langle x \rangle^\gamma U_a(-t')D^{s-\frac{5}{6}}N(u,v) \|_{L^2}dt'\vspace{2.3mm}\\\hspace{5mm}\le C(1+T)\displaystyle\int_0^T\|\langle x \rangle^\gamma D^{s-\frac{5}{6}}N(u,v)\|_{L^2}+\|N(u,v)\|_{H^s} dt'\vspace{2.3mm}\\\hspace{5mm}\le C(1+T)\left\{\|\langle x \rangle^\gamma D^{s-\frac{5}{6}}N(u,v)\|_{L^1_TL^2_x} +T^{1/2}\|N(u,v)\|_{L^2_TH^s}\right\}.
	\end{array}
\end{equation*}
The second term in the right-hand side of the above inequality can be bounded as it was done in \eqref{tele0}. For the first term, note that for $t\in [0,T]$ and $\lambda=\frac{6s-5}{6s}$ defined above, we have
\begin{align*} 
	\nonumber	\|\langle x \rangle^\gamma D^{s-\frac{5}{6}}N(u,v)\|_{L^2_x}&\le C\|\langle x \rangle^{s/2}N(u,v)\|_{L^2_x}^{1-\lambda} \|D^sN(u,v)\|_{L^2_x}^{\lambda}\\&\le C\left(\|\langle x \rangle^{s/2}N(u,v)\|_{L^2_x}+\|D^sN(u,v)\|_{L^2_x}\right).
\end{align*}
Hence, 
\begin{align*}
	\|\langle x \rangle^\gamma D^{s-\frac{5}{6}}N(u,v)\|_{L^1_TL^2_x}&\le C(1+T	)T^{1/2}\left\{\|\langle x\rangle^{s/2}N(u,v)\|_{L^2_TL^2_x}+\|D^{s}N(u,v)\|_{L^2_TL^2_x}\right\},
\end{align*}
where both terms, $\|\langle x\rangle^{s/2}N(u,v)\|_{L^2_TL^2_x}$ and $\|D^{s}N(u,v)\|_{L^2_TL^2_x}$, can be estimated using the local theory in weighted spaces as in \eqref{localt}. This completes the proof of the Proposition.
\end{proof}

With Proposition \ref{hssduh} in hand, following the same idea as in the proof of Theorem \ref{dispkawa}, we can prove Theorem \ref{disphss}; so we omit the details.

\section*{Acknowledgement}

This study was financed in part by the Coordenação de Aperfeiçoamento de Pessoal de Nível Superior - Brasil (CAPES) - Finance Code 001. A.P. is partially supported by Conselho Nacional de Desenvolvimento Científico e Tecnológico - Brasil (CNPq) grant 2019/02512-5.

\end{document}